\newcommand\NN{\mathbb{N}}
\newcommand\RR{\mathbb{R}} 
\newcommand\CC{\mathbb{C}} 
\newcommand\PP{\mathbb{P}}
\newcommand\EE{\mathbb{E}}
\newcommand\tr{\mathrm{tr}}
\newcommand\Car{\mathds{1}}
\newcommand\eps{\varepsilon}
\newcommand\Var{\mathrm{Var}}
\newcommand\rk{\mathrm{rank}}
\newcommand\supp{\mathrm{supp}}
\numberwithin{equation}{section}
\theoremstyle{definition} 
\newtheorem{Def}{Definition}[section] 
\theoremstyle{plain} 
\newtheorem{Pro}[Def]{Proposition} 
\newtheorem{Lem}[Def]{Lemma}  
\newtheorem{lemma}[Def]{Lemma} 
\newtheorem{The}[Def]{Theorem} 
\newtheorem{fact}[Def]{Fact} 
\newtheorem{Cor}[Def]{Corollary}  
\theoremstyle{remark}
\newtheorem{Rem}[Def]{Remark}
\begin{document}

\begin{frontmatter}
\title{Nonlinear large deviation bounds with applications to Wigner matrices and sparse Erd\H{o}s--R\'enyi graphs}
\runtitle{Nonlinear large deviation bounds}

\begin{aug}
\author{\fnms{Fanny} \snm{Augeri}\thanksref{t1}\ead[label=e1]{fanny.augeri@weizmann.ac.il}},

\thankstext{t1}{This project has received funding from the European Research Council (ERC) under the European Union’s Horizon 2020 research and innovation programme (grant agreement No. 692452).}
\runauthor{F. Augeri}

\affiliation{Weizmann Institute of Science}

\address{
Faculty of Mathematics and Computer Science\\
 The Weizmann Institute of Science\\
234 Herzl Street\\
 Rehovot 7610001 Israel\\
\printead{e1}\\
\phantom{E-mail:\ }}
\end{aug}

\begin{abstract}
We prove general nonlinear large deviation estimates similar to Chatterjee-Dembo's original bounds except that we do not require any second order smoothness. Our approach relies on convex analysis arguments and is valid for a broad class of distributions. Our results are then applied in three different setups. Our first application consists in the mean-field approximation of the partition function of the Ising model under an optimal assumption on the spectra of the adjacency matrices of the sequence of graphs. Next, we apply our general large deviation bound to investigate the large deviation of the traces of powers of Wigner matrices with sub-Gaussian entries, and the upper tail of cycles counts in sparse Erd\H{o}s--R\'enyi graphs down to the sparsity threshold $n^{-1/2}$.
\end{abstract}

\begin{keyword}[class=MSC]
\kwd{60F10}
\kwd{60B20}
\kwd{05C80}
\kwd{52A40.} 
\end{keyword}

\begin{keyword}
\kwd{Large deviations}
\kwd{convex analysis}
\kwd{mean-field approximation}
\kwd{Wigner matrices}
\kwd{Erd\H{o}s--R\'enyi graphs.}
\end{keyword}

\end{frontmatter}

\section{Introduction}
We will be concerned with the following large deviation question: given a random vector $X$ in $\RR^n$ and a smooth function $f : \RR^n \to \RR$, when is \textit{shifting the mean of $X$} the \textit{optimal} large deviation strategy for the random variable $f(X)$?  

In a seminal paper \cite{CD}, Chatterjee and Dembo provided a sufficient criterion, and showed in the case where $X$ is uniformly sampled on the discrete hypercube, that as soon as $f$ has a gradient of \textit{low complexity}, in the sense that it can be encoded by a small number of bits compared to the dimension, then the large deviations of $f(X)$ are only due to changes of the mean of $X$. Their framework encompasses a large class of large deviation problems as the mean-field approximation of the partition function of the Ising model (see \cite{BM}), the large deviation of sub-graph counts in  Erd\H{o}s--R\'enyi graphs - which were until then tackled using the graphon formalism in \cite{CV} - and arithmetic progressions. Later, Yan generalized in \cite{Yan} their nonlinear large deviation estimate to any compactly supported distribution. 

As a consequence of powerful structure theorems for probability measures on the discrete hypercube, Eldan obtained in \cite{Eldan} nonlinear large deviation bounds which do not require any second order smoothness on the given function, and where the complexity of the gradient is assessed in terms of the Gaussian mean-width of its image. Using these, he partially recovered a known result of Basak and Mukherjee \cite{BM} on the mean-field approximation of the partition function of the Ising model, and improved the threshold of sparsity for the large deviation upper tail  of triangle counts.

We will improve the error terms in the nonasymptotic bounds of \cite{CD}, and in particular remove the smoothness terms.  The motivation behind these improvements is that it allows for obtaining weaker dimension dependence. In particular, this entails that one can consider large deviation speeds much smaller than the dimension. In turn, this allows us to reach the critical sparsity level - as identified in \cite{LZ} and \cite{BGLZ} - in the large deviation bounds of cycle counts in sparse  Erd\H{o}s--R\'enyi graphs.

We will develop nonlinear large deviations estimates for general distributions, and propose several applications.
As a first example, we will apply our results to the mean-field approximation of the partition function of the Ising model on a sequence of graphs whose spectra satisfy certain assumptions which include star graphs, thus strengthening the previous result of Basak and Mukherjee \cite{BM}.

Next, we will use our nonlinear large deviation estimate to investigate the large deviation of traces of Wigner matrices with sub-Gaussian entries. Using a truncation argument to reduce the complexity of our function at hand, we show general upper and lower bounds. We will also prove the universality of the rate function for a class of Wigner matrices with \textit{sharp sub-Gaussian tails}. This complements the results of Guionnet and Husson  \cite{GH}, who introduced this class and showed such universality for the large deviation of the largest eigenvalue.

Finally, we estimate the upper tail of the large deviations of cycle counts in sparse  Erd\H{o}s--R\'enyi graphs, down to the sparsity threshold $n^{-1/2}$ - up to logarithmic corrections -, improving in the case of triangles the recent result of Cook and Dembo \cite{CoD}.

\subsection{Main results}
We begin with introducing some definitions.
    Let $\mu$ be a probability measure on $\RR^n$ whose support is not included in a hyperplane. 
Let $\Lambda_\mu$ be the \textit{logarithmic Laplace transform} of $\mu$, that is,
$$\forall \lambda \in \RR^n, \  \Lambda_\mu (\lambda) = \log \int e^{\langle \lambda, x\rangle} d\mu(x).$$
It is known that $\Lambda_\mu$ is a strictly convex function which is $C^{\infty}$ on the interior of its domain denoted by $\mathcal{D}_{\Lambda_\mu}$.
We define  $\Lambda^*_\mu$, the \textit{Legendre transform} of $\Lambda_\mu$, by
$$\forall x \in \RR^n, \ \Lambda^*_\mu(x) = \sup_{\lambda \in \RR^n} \{ \langle \lambda, x\rangle - \Lambda_\mu(\lambda)\}.$$
Following \cite[Section 26]{Rockafeller}, we say that a convex function $\Lambda : \RR^n \to \RR\cup\{+\infty\}$ is \textit{essentially smooth} if the interior of its domain  $\mathrm{int}(\mathcal{D}_{\Lambda})$ is non-empty, $\Lambda$ is differentiable on $\mathrm{int}(\mathcal{D}_{\Lambda})$, and steep, that is, for any $\lambda_k \in \mathrm{int}(\mathcal{D}_{\Lambda})$, converging to a point on the boundary of $\mathcal{D}_{\Lambda}$,
$$   \lim_{k\to +\infty} || \nabla \Lambda(\lambda_k)|| = +\infty.$$
The interest of this concept of essential smoothness lies in the fact that the Legendre transformation leaves invariant the class of strictly convex functions on $\RR^n$  which are essentially smooth. 
More precisely, assuming that $\Lambda_\mu$ is \textit{essentially smooth}, then by \cite[Theorem 26.5]{Rockafeller}, we know that $\Lambda^*_\mu$ is also an essentially smooth function, the map $\nabla \Lambda_\mu$ is one-to-one onto $\mathrm{int}(\mathcal{D}_{\Lambda^*_\mu})$, the interior of the domain of $\Lambda^*_\mu$, and the maps $\nabla \Lambda_\mu$ and $\nabla \Lambda^*_{\mu}$ are inverse maps from one another.

For any $\lambda \in \mathrm{int}(\mathcal{D}_{\Lambda_\mu})$, one can define the measure,
\begin{equation} \label{defmuh} \mu_y =e^{\langle \lambda ,x\rangle -\Lambda_\mu(\lambda)} d\mu(x),\end{equation}
where $y =\nabla \Lambda_\mu(\lambda)$. Observe that differentiating $\Lambda_\mu$, one obtains the barycenter of $\mu_y$, that is,
\begin{equation}\label{barycentermuh} y =  \nabla \Lambda_\mu(\lambda) =\int x d\mu_y(x).\end{equation}
Thus, the family of measures $\mu_y$ represents the collection of tilts of $\mu$, indexed by their barycenters.

Our first result is the following non-asymptotic version of Varadhan's lemma (see \cite[Theorem 4.3.1]{DZ}). This will be instrumental in our treatment of the mean-field behavior of the Ising model, see \S \ref{Ising}.

\begin{The}\label{partitionfunc}
Assume $\mu$ is compactly supported. Denote by $K$ the convex hull of its support, and by $D$ its diameter.
Let $f : \RR^n \to \RR$ be a continuously differentiable function.
 Let $\delta>0$ and $\mathcal{D}_{\delta}$ be a $\delta/D$-net of the convex hull of $\nabla f(K)$ for the $\ell^2$-norm.  Then,
$$\log \int e^{f} d\mu \leq \sup_{y \in \RR^n}  \{ f(y) -\Lambda^*_\mu(y)\}  + \log |\mathcal{D}_{\delta}| +  \delta.$$
\end{The}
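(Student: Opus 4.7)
The plan is to dominate $f$ pointwise on $K$ by a maximum of affine tangent functions indexed by $\mathcal{D}_\delta$, each of which has exponential moment at most $e^{V}$ with $V := \sup_{y}\{f(y) - \Lambda^*_\mu(y)\}$. For each $\lambda \in \mathcal{D}_\delta$ lying in the interior of $\mathcal{D}_{\Lambda_\mu}$ (all of $\RR^n$ when $\mu$ is compactly supported), set $y_\lambda := \nabla\Lambda_\mu(\lambda) \in K$ -- the barycenter of the tilt $\mu_{y_\lambda}$ of \eqref{defmuh} -- and introduce the affine function
$$f_\lambda(x) := f(y_\lambda) + \langle\lambda, x - y_\lambda\rangle.$$
Using the Legendre identity $\Lambda^*_\mu(y_\lambda) = \langle\lambda, y_\lambda\rangle - \Lambda_\mu(\lambda)$, one checks
$$\int e^{f_\lambda}\,d\mu = e^{f(y_\lambda) - \langle\lambda, y_\lambda\rangle + \Lambda_\mu(\lambda)} = e^{f(y_\lambda) - \Lambda^*_\mu(y_\lambda)} \leq e^{V}.$$

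The crux of the proof is then the pointwise estimate
$$f(x) \leq \max_{\lambda \in \mathcal{D}_\delta} f_\lambda(x) + \delta \qquad \text{for every } x \in K.$$
Granted this, bounding the maximum by the sum and combining with the previous display gives
$$\int e^f\,d\mu \leq e^{\delta}\sum_{\lambda \in \mathcal{D}_\delta}\int e^{f_\lambda}\,d\mu \leq e^{\delta}|\mathcal{D}_\delta|\,e^{V},$$
which is precisely the stated bound after taking logarithms. To prove the pointwise claim, fix $x \in K$ and apply the fundamental theorem of calculus along the segment $[y_\lambda, x] \subset K$:
$$f(x) - f_\lambda(x) = \langle\bar G_\lambda(x) - \lambda,\, x - y_\lambda\rangle,\qquad \bar G_\lambda(x) := \int_0^1 \nabla f\bigl(y_\lambda + t(x - y_\lambda)\bigr)\,dt.$$
The averaged gradient $\bar G_\lambda(x)$ lies in $\mathrm{conv}(\nabla f(K))$ as a convex combination of values of $\nabla f$ along a segment of $K$, so Cauchy--Schwarz together with $\|x - y_\lambda\| \leq D$ yields $f(x) - f_\lambda(x) \leq D\,\|\bar G_\lambda(x) - \lambda\|$. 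It therefore suffices to find $\lambda \in \mathcal{D}_\delta$ with $\|\bar G_\lambda(x) - \lambda\| \leq \delta/D$.

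The existence of such a net point is a fixed-point-type condition. The continuous self-map $\Phi_x : \lambda \mapsto \bar G_\lambda(x)$ of the compact convex set $\mathrm{conv}(\nabla f(K))$ admits, by Brouwer's theorem, a fixed point $\lambda^*_x = \bar G_{\lambda^*_x}(x)$, and $\mathcal{D}_\delta$ supplies a $\lambda$ within distance $\delta/D$ of $\lambda^*_x$. The hardest part will be to close the quantitative gap from this continuum fixed point to a nearby net point: since only $C^1$ regularity of $f$ is assumed, one cannot invoke a uniform Lipschitz control on $\Phi_x$ to transfer the fixed-point property from $\lambda^*_x$ to $\lambda$. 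A cleaner implementation likely bypasses Brouwer and instead partitions $K$ directly into pieces $A_\lambda$ tailored to the net, on each of which the Cauchy--Schwarz estimate on the remainder $R_\lambda(x) := f(x) - f_\lambda(x)$ is secured by construction, and then bounds $\int_{A_\lambda} e^f\,d\mu \leq e^{V+\delta}$ before summing. This recasts the pointwise claim as a combinatorial covering statement well suited to the convex-analytic toolkit highlighted in the introduction.
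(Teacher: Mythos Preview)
Your architecture is sound --- dominate $e^f$ by $e^\delta\sum_{\lambda\in\mathcal{D}_\delta} e^{f_\lambda}$ with $\int e^{f_\lambda}\,d\mu \le e^V$ --- but the pointwise inequality $f(x)\le \max_{\lambda\in\mathcal{D}_\delta} f_\lambda(x)+\delta$ is a genuine gap, and the Brouwer route you sketch does not close it. The difficulty you identify is real and not cosmetic: moving from the fixed point $\lambda^*_x$ to a nearby net point $\lambda$ shifts the base point $y_\lambda=\nabla\Lambda_\mu(\lambda)$, hence the whole segment $[y_\lambda,x]$ along which $\bar G_\lambda(x)$ is averaged, and with only $C^1$ regularity there is no modulus tying $\bar G_\lambda(x)$ to $\bar G_{\lambda^*_x}(x)$. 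Your alternative proposal of partitioning $K$ into sets $A_\lambda$ on which $R_\lambda\le\delta$ is circular: showing that the $A_\lambda$ cover $K$ \emph{is} the pointwise claim. The underlying reason the argument jams is that your affine dominants $f_\lambda(x)=f(y_\lambda)+\langle\lambda,x-y_\lambda\rangle$ couple $\lambda$ back to $f$ through the base point $y_\lambda$, so the net argument must control two moving pieces at once.

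The paper's proof decouples them. From the mean-value inequality $f(x)-f(y)\le h_W(x-y)$ (with $W=\mathrm{conv}(\nabla f(K))$) one writes $f(x)\le \big[f(y)-\Lambda^*_\mu(y)\big]+\big[h_W(x-y)+\Lambda^*_\mu(y)\big]\le V+\big[h_W(x-y)+\Lambda^*_\mu(y)\big]$, and minimizes over $y$ \emph{only in the second bracket}. That bracket is convex in $y$ and a sup of linear functions in $\lambda$, so the minimax theorem yields $\inf_y\{h_W(x-y)+\Lambda^*_\mu(y)\}=\sup_{\lambda\in W}\{\langle\lambda,x\rangle-\Lambda_\mu(\lambda)\}$. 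Hence $f(x)\le V+\sup_{\lambda\in W}\{\langle\lambda,x\rangle-\Lambda_\mu(\lambda)\}$. Now the net argument is trivial: for fixed $x\in K$ the map $\lambda\mapsto\langle\lambda,x\rangle-\Lambda_\mu(\lambda)$ has gradient $x-\nabla\Lambda_\mu(\lambda)$, a difference of two points of $K$, hence is $D$-Lipschitz, and the sup over $W$ exceeds the sup over $\mathcal{D}_\delta$ by at most $\delta$. In your framework this amounts to replacing $f_\lambda$ by the simpler affine family $g_\lambda(x)=V+\langle\lambda,x\rangle-\Lambda_\mu(\lambda)$, which still satisfies $\int e^{g_\lambda}\,d\mu=e^V$ but no longer involves $f$ at the base point --- that is what makes the discretization go through.
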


Our approximation of the partition function is similar to the one of Chatterjee-Dembo (see \cite[Theorem 1.6]{CD}), except that we do not have any other error terms but the one coming from the cardinality of the net $\mathcal{D}_\delta$. 	Unlike Eldan's result (see \cite[Corollary 2]{Eldan}) which involves the \textit{Gaussian mean-width} of the set of gradients, we are assessing the complexity of the gradient in terms of covering numbers. Using Sudakov inequality (see \cite[Theorem 3.18]{LT}), it is possible to compare \cite[Corollary 2]{Eldan} with Theorem \ref{partitionfunc}. More precisely, we have the following result as a corollary of Theorem \ref{partitionfunc}.   

\begin{Cor}\label{GW}
Assume $\mu$ is compactly supported. Denote by $K$ the convex hull of its support and assume that the diameter of $K$ is $\sqrt{n}$. Let $f : \RR^n \to \RR$ be a continuously differentiable function and let $V = \nabla f(K)$. There exists a numerical constant $\kappa>0$ such that,
$$\log \int e^{f} d\mu \leq \sup_{y \in \RR^n} \{ f(y) -\Lambda^*_\mu(y)\}  +\kappa n^{\frac{1}{3}} g(V)^{\frac{2}{3}},$$
where $g(V)$ is the Gaussian mean-width of $V$, defined as
$$ g(V) = \EE \sup_{\zeta \in V} \langle \zeta , \Gamma \rangle,$$
with $\Gamma$ being a standard Gaussian random variable in $\RR^n$.
\end{Cor}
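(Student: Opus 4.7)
The plan is to apply Theorem \ref{partitionfunc} with a carefully chosen $\delta$ and then use Sudakov's inequality to convert the metric-entropy term $\log |\mathcal{D}_\delta|$ into one involving only the Gaussian mean-width of $V$.

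First I would substitute $D = \sqrt{n}$ into the bound of Theorem \ref{partitionfunc}, so that for every $\delta > 0$ and every $\delta/\sqrt{n}$-net $\mathcal{D}_\delta$ of the convex hull $\mathrm{conv}(V)$ in the $\ell^2$-norm,
$$\log \int e^{f} d\mu \leq \sup_{y \in \RR^n} \{f(y) - \Lambda^*_\mu(y)\} + \log |\mathcal{D}_\delta| + \delta.$$
I would then take $\mathcal{D}_\delta$ to be a minimal such net, so that $|\mathcal{D}_\delta|$ is the covering number $N(\mathrm{conv}(V), \delta/\sqrt{n})$.

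Next I would invoke Sudakov's inequality (\cite[Theorem 3.18]{LT}), which gives a numerical constant $c > 0$ such that for every $\varepsilon > 0$ and every bounded set $T \subset \RR^n$,
$$\log N(T, \varepsilon) \leq \frac{c\, g(T)^2}{\varepsilon^2}.$$
Applying this to $T = \mathrm{conv}(V)$ with $\varepsilon = \delta/\sqrt{n}$, and using the standard fact that the Gaussian mean-width is invariant under taking convex hulls, i.e.\ $g(\mathrm{conv}(V)) = g(V)$ (since $\zeta \mapsto \langle \zeta, \Gamma\rangle$ is linear, the supremum over $V$ equals the supremum over $\mathrm{conv}(V)$), one obtains
$$\log |\mathcal{D}_\delta| \leq \frac{c\, n\, g(V)^2}{\delta^2}.$$

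Combining the two displays yields
$$\log \int e^{f} d\mu \leq \sup_{y \in \RR^n} \{f(y) - \Lambda^*_\mu(y)\} + \frac{c\, n\, g(V)^2}{\delta^2} + \delta,$$
and the last step is to optimize in $\delta$. Elementary calculus shows that the minimum of $\delta \mapsto c n g(V)^2/\delta^2 + \delta$ is attained at $\delta = (2cn)^{1/3} g(V)^{2/3}$, at which point the sum is a constant multiple of $n^{1/3} g(V)^{2/3}$. Choosing $\kappa$ accordingly gives the stated inequality.

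The only nontrivial points are the invocation of Sudakov's bound in its metric-entropy form (rather than its more common dual statement) and the convex-hull invariance of $g$; once these are in hand the argument is a single line of optimization, so I expect no real obstacle beyond tracking the numerical constant $\kappa$.
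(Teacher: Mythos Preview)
Your proposal is correct and follows essentially the same approach as the paper: apply Theorem \ref{partitionfunc} with $D=\sqrt n$, bound the covering number of $\mathrm{conv}(V)$ via Sudakov's inequality, use $g(\mathrm{conv}(V))=g(V)$, and optimize in $\delta$. The paper's proof is identical in structure and in the key ingredients you identified.
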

\begin{proof}
From Theorem \ref{partitionfunc} we have for any $\delta>0$,
$$\log \int e^{f} d\mu \leq \sup_{y \in \RR^n}  \{ f(y) -\Lambda^*_\mu(y)\}  + \log N\big(\mathrm{co}(V), (\delta/\sqrt{n}) B_{\ell^2}\big) +  \delta,$$
where $\mathrm{co}(V)$ denotes the convex hull of $V$ and $N\big(\mathrm{co}(V), (\delta/\sqrt{n}) B_{\ell^2}\big)$ the covering number of $\mathrm{co}(V)$ by $(\delta/\sqrt{n}) B_{\ell^2}$. By Sudakov inequality (see \cite[Theorem 3.18]{LT}), we know that there exists a numerical constant $c>0$ such that
$$  \log N(\mathrm{co}(V), (\delta/\sqrt{n}) B_{\ell^2})  \leq c \frac{n g(\mathrm{co}(V))^2}{\delta^2}.$$
But as the supremum of a convex function on a set is equal to the supremum on its convex hull, we have $g(\mathrm{co}(V)) = g(V)$. Thus,
$$\log \int e^{f} d\mu \leq \sup_{y \in \RR^n}  \{ f(y) -\Lambda^*_\mu(y)\}  +  c \frac{n g(V)^2}{\delta^2} +  \delta.$$
Optimizing in $\delta>0$, we obtain that there exists a numerical constant $\kappa>0$ such that
$$\log \int e^{f} d\mu \leq \sup_{y \in \RR^n}  \{ f(y) -\Lambda^*_\mu(y)\}  + \kappa n^{\frac{1}{3}} g(V)^{\frac{2}{3}}.$$
\end{proof}
As for the lower bound, it is a classical fact to note that as a consequence of Jensen's inequality,  one always has the following a lower bound by just changing the barycenter of the underlying reference measure. 
\begin{Lem}\label{lbpartition}Let $f : \RR^n\to \RR$ be a non-negative measurable function. Then,
$$ \log \int e^f d\mu \geq \sup \big\{ \int f d\mu_{y} - \Lambda^*_{\mu}(y), \ y \in \nabla \Lambda_\mu \big(\mathrm{int}(\mathcal{D}_{\Lambda_\mu}) \big) \big\},$$
where $\mu_y$ is defined in \eqref{defmuh}.
\end{Lem}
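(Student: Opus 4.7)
The plan is a one-line application of Jensen's inequality after reweighting $\mu$ into the tilted probability $\mu_y$. Fix $\lambda\in\mathrm{int}(\mathcal{D}_{\Lambda_\mu})$ and let $y=\nabla\Lambda_\mu(\lambda)$, so by \eqref{defmuh} we can write
$$\int e^{f}\,d\mu=\int e^{f(x)-\langle\lambda,x\rangle+\Lambda_\mu(\lambda)}\,d\mu_y(x).$$
Since $\mu_y$ is a probability measure and $f\ge 0$ ensures that the integrand in the exponent has a well-defined $\mu_y$-integral in $[-\infty,+\infty]$ (the linear term is $\mu_y$-integrable because $\lambda\in\mathrm{int}(\mathcal{D}_{\Lambda_\mu})$), Jensen's inequality applied to the convex function $e^t$ yields
$$\log\int e^{f}\,d\mu \ge \int\bigl(f(x)-\langle\lambda,x\rangle\bigr)\,d\mu_y(x)+\Lambda_\mu(\lambda).$$

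Next I would use the barycenter identity \eqref{barycentermuh}, which gives $\int\langle\lambda,x\rangle\,d\mu_y(x)=\langle\lambda,y\rangle$, and then invoke the Legendre duality. Indeed, by essential smoothness of $\Lambda_\mu$, the concave map $\lambda'\mapsto\langle\lambda',y\rangle-\Lambda_\mu(\lambda')$ is maximized precisely at the point $\lambda$ satisfying $\nabla\Lambda_\mu(\lambda)=y$, and this maximum equals $\Lambda^*_\mu(y)$. Hence $\langle\lambda,y\rangle-\Lambda_\mu(\lambda)=\Lambda^*_\mu(y)$, and substituting gives
$$\log\int e^{f}\,d\mu \ge \int f\,d\mu_y-\Lambda^*_\mu(y).$$
Taking the supremum over $y\in\nabla\Lambda_\mu(\mathrm{int}(\mathcal{D}_{\Lambda_\mu}))$ (equivalently, over $\lambda\in\mathrm{int}(\mathcal{D}_{\Lambda_\mu})$) yields the claimed bound.

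There is no real obstacle here; this is the standard change-of-measure lower bound familiar from Varadhan's lemma and the proof of the Donsker--Varadhan variational formula. The only points that need a brief justification are (i) that $f\ge 0$ (together with $\lambda$ being in the interior of the domain) makes the use of Jensen's inequality legitimate, and (ii) the identification of $\langle\lambda,y\rangle-\Lambda_\mu(\lambda)$ with $\Lambda^*_\mu(y)$ when $y=\nabla\Lambda_\mu(\lambda)$, which follows from the properties of the Legendre transform already recalled in the excerpt.
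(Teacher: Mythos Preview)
Your proof is correct and follows exactly the approach the paper indicates: the paper does not spell out a proof of this lemma but simply attributes it to Jensen's inequality after the change of measure $\mu\mapsto\mu_y$, which is precisely what you carry out (and the same identities $\Lambda^*_\mu(y)=\langle\lambda,y\rangle-\Lambda_\mu(\lambda)$ and \eqref{barycentermuh} are used explicitly in the paper's proof of the closely related Lemma~\ref{borneinfdet}).
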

\begin{Rem}\label{gappartition}
Note the gap between the lower bound of Lemma \ref{lbpartition} and the main term $\sup\{ f -\Lambda^*_\mu\}$ in the upper bound of Proposition \ref{partitionfunc}. 

When $\mu$ is the uniform measure on the discrete hypercube, one can consider, given a function $f : \{0,1\}^n \to \RR$, its natural harmonic extension, and the just mentioned gap resolves itself. One can view this fact as at the heart of Eldan's approach \cite{Eldan}.
For further discussion on the general case, we refer the reader to Remark \ref{gap1}.

\end{Rem}
We will now state our nonlinear large deviations upper bound.
We recall from \cite[Section 1.2]{DZ} that a sequence of random variables $(Z_n)_{n\in \NN}$ taking value in some topological space $\mathcal{X}$ equipped with the Borel $\sigma$-field $\mathcal{B}$, satisfies a \textit{large deviations principle} (LDP) with \textit{speed} $\upsilon_n$, and \textit{rate function} $I : \mathcal{X} \to [0, +\infty]$, if $I$ is lower semicontinuous, $\upsilon_n$ increases to infinity, and for all $B\in \mathcal{B}$,
\begin{align}
- \inf_{\mathrm{int}(B)}I \leq \liminf_{n\to +\infty} \frac{1}{\upsilon_n}& \log \PP\left(Z_n \in B\right)\nonumber\\
& \leq \limsup_{n\to +\infty} \frac{1}{\upsilon_n}\log\PP\left(Z_n \in B\right) 
\leq -\inf_{\mathrm{cl}(B)  } I,\label{upperboundLDP}\end{align}
where $\mathrm{int}(B)$ denotes the interior of $B$ and $\mathrm{cl}(B)$ the closure of $B$. We recall that $I$ is \textit{lower semicontinuous} if its $t$-level sets $\{ x \in \mathcal{X} : I(x) \leq t \}$ are closed, for any $t\in [0,+\infty)$. Furthermore, if all the level sets are compact, then $I$ is said to be a \textit{good rate function}.

By \cite[(1.2.7)]{DZ}, for good rate functions $I$, the large deviations upper bound, that is the right-most inequality in \eqref{upperboundLDP}, is equivalent to the statement that for any $r>0$ and $\delta>0$, 
$$ \limsup_{n\to +\infty} \frac{1}{v_n} \log \PP\big( Z_n \notin V_{\delta}(\{I\leq r\}) \big) \leq - r,$$
where for any subset $A \subset \RR$,  $V_{\delta}(A)$ denotes the \textit{open $\delta$-neighborhood of $A$}, that is,
$$ V_{\delta}(A) = \big\{ x \in \RR : \inf_{y \in A} |x-y|< \delta\big\}.$$
With that in mind, the following proposition is our general large deviation upper bound.
\begin{The}\label{NL}
Let $X$ be a random vector in $\RR^n$ with law $\mu$ such that $\Lambda_\mu$ is essentially smooth. Let $f : \RR^n \to \RR$ be a measurable function. Define,
$$\forall t \in \RR, \  I(t) = \inf \{ \Lambda^*_{\mu}(y) : f(y)= t, y \in \RR^n \}.$$
Let $r>0$. Assume there exists $\kappa\geq r$ such that denoting by $K = \{ \Lambda^*_\mu \leq \kappa\}$,
\begin{equation} \label{tightness}\PP(X\notin  K)\leq e^{-r}.\end{equation}
Let $W \subset \RR^n$ be a compact convex set such that, 
\begin{equation} \label{meanvalueass} \forall x,y \in K, \ f(x)-f(y) \leq \sup_{\lambda \in W} \langle \lambda, x-y\rangle,\end{equation}
Denote by $D$ the diameter of $K$. Let $\delta>0$  and let $\mathcal{D}_{\delta/2}$ be a $\delta/2D$-net of $W$ for the $\ell^2$-norm.
 Then,
$$\log \PP\big( f(X) \notin V_{\delta}(\{I\leq r\}) \big) \leq -r +\log|\mathcal{D}_{\delta/2}|+ \log\Big(\Big( \frac{\kappa LD }{\delta}\Big)\vee 1\Big)  + 2,$$
where $L = \sup_{\lambda \in V} ||\lambda ||_{\ell^2}$.
\end{The}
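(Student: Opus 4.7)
\emph{Proof plan.} My strategy is to reduce the nonlinear deviation event to a union of linear half-space events via convex duality and a minimax argument, and then bound each by exponential tilting.

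First, decompose with \eqref{tightness}:
$$\PP\bigl(f(X)\notin V_\delta(\{I\leq r\})\bigr)\leq \PP(X\notin K)+\PP\bigl(X\in K,\ f(X)\notin V_\delta(\{I\leq r\})\bigr),$$
the first summand being at most $e^{-r}$. Set $K_r:=\{\Lambda^*_\mu\leq r\}\subseteq K$, which is convex and compact by essential smoothness. The event in the second summand forces $|f(X)-f(y)|\geq \delta$ for every $y\in K_r$; splitting into the upper and lower tails (which accounts for the additive $2$ in the final bound), focus on the upper tail $f(X)-f(y)\geq\delta$ for all $y\in K_r$. The mean value hypothesis \eqref{meanvalueass} then yields $\sup_{\lambda\in W}\langle\lambda,X-y\rangle\geq\delta$ for every $y\in K_r$. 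Applying Sion's minimax theorem to the bilinear functional on the compact convex sets $W$ and $K_r$ produces some $\lambda^*\in W$ with $\langle\lambda^*,X\rangle-\sup_{y\in K_r}\langle\lambda^*,y\rangle\geq\delta$.

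Next, approximate $\lambda^*$ by its nearest neighbor $\lambda\in\mathcal{D}_{\delta/2}$. Combining $\|\lambda-\lambda^*\|_{\ell^2}\leq \delta/(2D)$ with $\|X-y\|_{\ell^2}\leq D$ for $X\in K$ and $y\in K_r\subseteq K$, Cauchy--Schwarz gives $\langle\lambda,X\rangle-\sup_{y\in K_r}\langle\lambda,y\rangle\geq \delta/2$. The half-space $H_\lambda:=\{y:\langle\lambda,y\rangle\geq\sup_{y'\in K_r}\langle\lambda,y'\rangle+\delta/2\}$ is therefore disjoint from $K_r$, so $\Lambda^*_\mu>r$ on $H_\lambda$.

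Finally, bound $\PP(X\in H_\lambda)$ by Chernoff's inequality with scaled tilts $t\lambda$. Rather than invoking the optimized-tilt Cramér inequality directly, discretize $t$ on a grid of cardinality $O(\kappa LD/\delta)$: the range is dictated by the bound $\Lambda^*_\mu\leq\kappa$ on $K$ together with the norm and diameter bounds $L, D$, while a spacing of order $\delta/(LD)$ ensures that some grid value brings the Chernoff threshold within $\delta/2$ of $\sup_{y\in K_r}\langle\lambda,y\rangle$. For that $t$, Chernoff yields $\PP(\langle\lambda,X\rangle\geq\sup_{y\in K_r}\langle\lambda,y\rangle+\delta/2)\leq e^{-r}$. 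A union bound over $\lambda\in\mathcal{D}_{\delta/2}$, the tilt grid, and the two tails, together with the tightness contribution, gives the claimed estimate after taking logarithms. The main obstacle is the quantitative scalar discretization of the tilt: one must ensure the grid is simultaneously fine enough for some value to make the Chernoff bound equal to $e^{-r}$, and small enough that its cardinality contributes only $\log(\kappa LD/\delta)$; this hinges on the Legendre duality $\sup_{y\in K_r}\langle\lambda,y\rangle=\inf_{t>0}(r+\Lambda_\mu(t\lambda))/t$ for sublevel sets and on the geometric bounds controlled by $\kappa$, $L$, and $D$.
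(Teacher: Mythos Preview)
Your minimax route is genuinely different from the paper's. The paper never splits into tails or invokes Sion; instead it shows that $f(x)\notin V_\delta(\{I\le r\})$ forces $\inf_{z\in\delta W^\circ}\Lambda^*_\mu(x-z)\ge r$, and then analyzes this convex program via Kuhn--Tucker, producing a pair $(\theta,\lambda)$ with $\langle\theta\lambda,x\rangle-\Lambda_\mu(\theta\lambda)-\theta\delta\ge r$. Because \emph{both} $\theta$ and $\lambda$ depend on $x$ in that formulation, both must be discretized; that is where the $\log(\kappa LD/\delta)$ term enters. Your approach swaps the order: applying minimax first to extract a random $\lambda^*$ (and then a net point $\lambda$), after which the exponential tilt becomes \emph{deterministic}.

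This brings me to the confused part of your plan. Once $\lambda$ is fixed from the net, there is no reason to discretize the tilt, and a ``union bound over the tilt grid'' is meaningless: Chernoff's inequality is a bound, not an event. By strong Lagrangian duality (Slater holds since the barycenter $\overline m$ satisfies $\Lambda^*_\mu(\overline m)=0<r$) you have
\[
S_\lambda:=\sup_{y\in K_r}\langle\lambda,y\rangle=\inf_{\tau>0}\frac{r+\Lambda_\mu(\tau\lambda)}{\tau}.
\]
Choosing any $\tau$ with $\tau^{-1}(r+\Lambda_\mu(\tau\lambda))<S_\lambda+\delta/4$, Chernoff gives directly
\[
\PP\bigl(\langle\lambda,X\rangle\ge S_\lambda+\tfrac{\delta}{2}\bigr)\le e^{-\tau(S_\lambda+\delta/2)+\Lambda_\mu(\tau\lambda)}\le e^{-r-\tau\delta/4}\le e^{-r}.
\]
So the only union bound needed is over $\lambda\in\mathcal D_{\delta/2}$ and the two tails. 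Carried out correctly your argument therefore yields $\log\PP\le -r+\log|\mathcal D_{\delta/2}|+O(1)$ \emph{without} the $\log(\kappa LD/\delta)$ term --- strictly better than the stated bound. The ``main obstacle'' you identify is a phantom: the range of the optimal tilt is irrelevant, since the tilt is not random once $\lambda$ is fixed. Your attribution of the additive $+2$ to the two tails is also off (two tails contribute $\log 2$; in the paper the $+2$ comes from the net argument and the recombination with the tightness term).

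One genuine caveat: for the lower tail your minimax produces $\lambda^*\in -W$, not $W$, so you need either $W=-W$ (which holds in all of the paper's applications) or a net of $W\cup(-W)$. The paper's own proof carries the same implicit assumption.
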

\begin{Rem} \label{smooth}

The wording of Theorem \ref{NL} is intended to include non-smooth functions which we will encounter in the applications. It is particularly relevant for  functions $f$ which are linear combinations of non-smooth convex functions, for which we can take $W$ to be  Minkowski sums of sets of subdifferentials of the convex functions involved in the decomposition.

\end{Rem}
\begin{Rem}\label{prod}
The tightness assumption \eqref{tightness} in the Proposition \ref{NL} is automatically satisfied for product measures. Indeed, 
we know from \cite[Lemma 5.1.14]{DZ} that if $\mu$ is a probability measure on $\RR$, then for any $\alpha \in(0,1)$,
$$ \EE e^{\alpha \Lambda^*_{\mu}(X_1)} \leq \frac{2}{1-\alpha}.$$
Taking $\alpha = 1/2$, we deduce that for $\kappa = 12(r\wedge n)$, Chernoff's inequality gives
$$ \PP( \Lambda^*_{\mu^n}(X) > \kappa ) \leq e^{-r}.$$
\end{Rem}

As a consequence of Theorem \ref{NL}, we obtain a nonlinear large deviations estimate for the upper tail of a function with a low-complexity gradient.
\begin{Cor}\label{CorNL}
Let $X$ be a random vector in $\RR^n$ with law $\mu$ such that $\Lambda_\mu$ is essentially smooth. Let $f : \RR^n \to \RR$ be a measurable function. Define,
$$\forall t \in \RR, \  \phi(t) = \inf \{ \Lambda^*_{\mu}(y) : f(y)\geq  t, y \in \RR^n \}.$$
Let $t,\delta>0$ and assume that $\phi$ is increasing on $[t-\delta,+\infty)$. 
Let $\kappa\geq \phi(t-\delta)$ such that denoting by $K = \{ \Lambda^*_\mu \leq \kappa\}$,
\begin{equation} \label{tightness}\PP(X\notin  K)\leq e^{-\phi(t-\delta)}.\end{equation}
Let $W \subset \RR^n$ be a compact convex set such that, 
$$ \forall x,y \in K, \ f(x)-f(y) \leq \sup_{\lambda \in W} \langle \lambda, x-y\rangle,$$
Denote by $D$ the diameter of $K$ and let $\mathcal{D}_{\delta/2}$ be a $\delta/2D$-net of $W$ for the $\ell^2$-norm.
 Then, 
$$\log \PP\big( f(X) \geq t \big) \leq -\phi(t-\delta) +\log|\mathcal{D}_{\delta/2}|+ \log\Big(\Big( \frac{\kappa LD }{\delta}\Big)\vee 1\Big)  + 2,$$
where $L = \sup_{\lambda \in V} ||\lambda ||_{\ell^2}$.
\end{Cor}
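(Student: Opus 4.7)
The plan is to derive the corollary directly from Theorem \ref{NL} applied with the choice $r = \phi(t-\delta)$, after verifying that, with this choice, the upper tail event $\{f(X)\geq t\}$ is contained in the event $\{f(X) \notin V_\delta(\{I\leq r\})\}$ that is controlled by Theorem \ref{NL}. Once this set-theoretic inclusion is in place, the tightness, mean-value and covering hypotheses of the corollary match those of the theorem verbatim, so there is essentially nothing left to do.

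The main step is the inclusion on the real line. I would first record the pointwise comparison $\phi \leq I$: for every $s$, a point $y$ with $f(y)=s$ automatically satisfies $f(y)\geq s$, so the infimum defining $\phi(s)$ is taken over a set containing the one defining $I(s)$. Consequently,
$$\{s\in\RR : I(s)\leq \phi(t-\delta)\} \subset \{s\in\RR : \phi(s)\leq \phi(t-\delta)\}.$$
Since $\phi$ is (strictly) increasing on $[t-\delta,+\infty)$, the right-hand set is contained in $(-\infty,t-\delta]$. Its open $\delta$-neighborhood is therefore contained in $(-\infty,t)$, which gives
$$\{f(X)\geq t\} \subset \{f(X)\notin V_\delta(\{I\leq \phi(t-\delta)\})\}.$$

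With this inclusion established, I would apply Theorem \ref{NL} with $r=\phi(t-\delta)$, the same convex set $W$, the same net $\mathcal{D}_{\delta/2}$, and the same $K=\{\Lambda^*_\mu\leq \kappa\}$; the hypothesis $\kappa\geq \phi(t-\delta)=r$ and the tightness assumption \eqref{tightness} are built into the statement of the corollary, so the application is direct. The bound of Theorem \ref{NL} then yields exactly the inequality claimed.

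There is no genuine obstacle in this argument; the only point of care is that the word \emph{increasing} in the hypothesis on $\phi$ must be read in the strict sense on $[t-\delta,+\infty)$. Indeed, $\phi$ is always non-decreasing (as the infimum over a shrinking family of sets), and the implication $\phi(s)\leq \phi(t-\delta)\Rightarrow s\leq t-\delta$ that drives the inclusion above genuinely requires strict monotonicity on that half-line.
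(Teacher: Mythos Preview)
Your proof is correct and follows essentially the same approach as the paper: both establish the inclusion $\{I\le \phi(t-\delta)\}\subset(-\infty,t-\delta]$ via $\phi\le I$ and the strict monotonicity of $\phi$, then apply Theorem~\ref{NL} with $r=\phi(t-\delta)$. Your explicit remark that ``increasing'' must be read strictly is exactly the point the paper flags as well.
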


\begin{proof}
We claim that our assumption on the strict monotonicty of $\phi$ yields that 
\begin{equation} \label{inclusion} \{ I\leq \phi(t-\delta)\}\subset (-\infty, t-\delta].\end{equation}
Indeed, for any $z \in \RR$, if $I(z) \leq \phi(t-\delta)$, then in particular $\phi(z)\leq \phi(t-\delta)$. Since $\phi$ is increasing on $[t-\delta,+\infty)$, we have $z \leq t-\delta$, which shows \eqref{inclusion}.
Therefore, if $x\in \RR^n$ is such that $f(x)\geq t$ then  $f(x)\notin V_\delta(\{ I\leq \phi(t-\delta)\})$. Thus, we have
$$ \PP( f(X) \geq t) \leq \PP\big( f(X) \notin V_\delta(\{ I\leq \phi(t-\delta)\})\big),$$
which, by Theorem \ref{NL}, ends the proof.
\end{proof}

Turning our attention to the lower bound, one can always get a large deviations lower bound by finding a tilting of the underlying measure $\mu$ which transforms a large deviations event for $\mu$ into a typical event for $\mu_h$. (This idea is prevalent in the large deviations literature, and is also the basis for the proof of Lemma \ref{lbpartition}). The following lower bound, which builds on  \cite[p. 64]{Ledouxflour} quantifies this idea. First, note that differentiating twice $\Lambda_\mu$, one obtains the covariance of the measure $\mu_y$, that is,
$$\forall \eta \in \RR^n, \  \Var_{\mu_y} \langle \eta, X \rangle =\langle \eta,  \nabla^2 \Lambda_\mu(\lambda).\eta\rangle ,$$
where $y = \nabla \Lambda_\mu(\lambda)$.  With this notation, one obtains the following lemma.
\begin{Lem}\label{borneinfdet}
 Let $V \subset \RR^n$ be a measurable subset, and $\lambda \in  \mathrm{int}(\mathcal{D}_{\Lambda_{\mu}})$.  Then,
$$\mu(V) \geq e^{-\Lambda_{\mu}^*(y)} \mu_{y}(V)\exp\Big(-\frac{1}{\mu_{y}(V)^{1/2}} \langle \lambda, \nabla^2\Lambda_{\mu}(\lambda).\lambda \rangle^{1/2} \Big),$$
where $ y= \nabla \Lambda_{\mu} (\lambda) $, and $\mu_y$ is defined in \eqref{defmuh}.
\end{Lem}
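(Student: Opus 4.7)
The proof is a tilting-plus-Jensen argument. The starting point is that the tilted measure $\mu_y$ has Radon--Nikodym derivative $e^{\langle\lambda,x\rangle-\Lambda_\mu(\lambda)}$ against $\mu$, so reversing the tilt gives
$$\mu(V)=\int_V e^{-\langle\lambda,x\rangle+\Lambda_\mu(\lambda)}\,d\mu_y(x).$$
Since $y=\nabla\Lambda_\mu(\lambda)$ and $\Lambda_\mu$ is essentially smooth, Legendre duality yields $\Lambda_\mu^*(y)=\langle\lambda,y\rangle-\Lambda_\mu(\lambda)$, so one can rewrite this as
$$\mu(V)=e^{-\Lambda_\mu^*(y)}\int_V e^{-\langle\lambda,x-y\rangle}\,d\mu_y(x).$$

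Next I would factor out $\mu_y(V)$ and apply Jensen's inequality to the probability measure $\mathbf{1}_V\,d\mu_y/\mu_y(V)$, using convexity of $\exp$ in the favorable direction:
$$\mu(V)=e^{-\Lambda_\mu^*(y)}\mu_y(V)\int e^{-\langle\lambda,x-y\rangle}\frac{\mathbf{1}_V\,d\mu_y(x)}{\mu_y(V)}\geq e^{-\Lambda_\mu^*(y)}\mu_y(V)\exp\!\Big(-\int_V\langle\lambda,x-y\rangle\frac{d\mu_y(x)}{\mu_y(V)}\Big).$$

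It then remains to bound the inner integral by Cauchy--Schwarz against the indicator $\mathbf{1}_V$:
$$\Big|\int_V\langle\lambda,x-y\rangle\,d\mu_y(x)\Big|\leq\mu_y(V)^{1/2}\Big(\int_V\langle\lambda,x-y\rangle^2\,d\mu_y(x)\Big)^{1/2}\leq\mu_y(V)^{1/2}\langle\lambda,\nabla^2\Lambda_\mu(\lambda).\lambda\rangle^{1/2},$$
where in the last step I use that the second moment of $\langle\lambda,X-y\rangle$ under $\mu_y$ is exactly $\Var_{\mu_y}\langle\lambda,X\rangle=\langle\lambda,\nabla^2\Lambda_\mu(\lambda).\lambda\rangle$ (since $y$ is the mean of $\mu_y$ by \eqref{barycentermuh}), and extending the integral from $V$ to $\RR^n$ can only increase it. Dividing by $\mu_y(V)$ produces the factor $\mu_y(V)^{-1/2}$ in the exponent and yields the stated inequality.

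None of the steps is really the ``main obstacle''; if anything, the only subtlety is recognizing which form of Cauchy--Schwarz to use, namely keeping $\mathbf{1}_V$ paired with the constant $1$ so that the resulting bound involves $\mu_y(V)^{1/2}$ (giving the $\mu_y(V)^{-1/2}$ after normalization), rather than pairing it with $\langle\lambda,x-y\rangle$, which would give a bound in terms of $\mu_y(V^c)^{1/2}$ instead of the desired $\mu_y(V)^{-1/2}$.
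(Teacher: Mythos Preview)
Your proof is correct and follows exactly the same route as the paper's own proof: change of measure to $\mu_y$, Legendre duality to extract $e^{-\Lambda_\mu^*(y)}$, Jensen's inequality on the conditional measure, and Cauchy--Schwarz to bound the remaining integral by the variance term. The only cosmetic difference is that you spell out the final identification $\Var_{\mu_y}\langle\lambda,X\rangle=\langle\lambda,\nabla^2\Lambda_\mu(\lambda).\lambda\rangle$ and the extension from $V$ to $\RR^n$, which the paper leaves implicit.
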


\begin{proof}Let $\lambda \in \mathrm{int}(\mathcal{D}_{\Lambda_{\mu}})$ and $y = \nabla \Lambda_\mu(\lambda)$. 
Changing the measure $\mu$ into $\mu_y$, we have
$$\mu(V) = \int_{V} e^{-(\langle \lambda , x\rangle - \Lambda_\mu(\lambda))} d\mu_{y} (x) = e^{-\Lambda_{\mu}^*(y)}\int_{V} e^{-\langle \lambda ,  x-y\rangle}d\mu_{y}(x),$$
where we used the fact that $\Lambda^*_\mu(y) = \langle \lambda,y\rangle - \Lambda_\mu(\lambda)$.
By Jensen's inequality we deduce,
$$\mu(V) \geq e^{-\Lambda_{\mu}^*(y)}\mu_{h}(V) \exp\Big( -\frac{1}{\mu_{y}(V)} \int_{V} \langle \lambda , x-y \rangle d\mu_{y}(x)\Big).$$
Using Cauchy-Schwarz inequality we get,
$$ \int_{V} \langle \lambda , x-y \rangle d\mu_{y}(x) \leq \mu_{y}(V)^{1/2} \Big(\int \langle \lambda , x-y \rangle^2 d\mu_{y}(x)\Big)^{1/2},$$
which yields the claim.

\end{proof}

\subsection{Applications}
In this section, we provide several examples in which the gap between the upper bounds (Propositions \ref{partitionfunc} and \ref{NL}) and lower bounds (Lemmas \ref{lbpartition} and \ref{borneinfdet}) is negligible in the large deviation scale. 
\subsubsection{Mean-field approximation in the Ising model}\label{Ising}
Our first example of application is the accuracy of the mean-field prediction in the Ising model with large degree. This topic is discussed in \cite{BM},  \cite[section 1.3]{Eldan}, \cite{JKM}, \cite{JKR}, and we will improve on some of the results therein.

 In particular, we show that the mean-field approximation of the Ising model holds true as soon as the empirical distributions of the eigenvalues of the interaction matrices converge weakly to a Dirac at $0$, and the second moments are uniformly bounded. 

Let $\mathcal{H}_n$ denote the set of Hermitian matrices of size $n$. For any $A\in\mathcal{H}_n$, we denote by $\mu_A$ the empirical distribution of its eigenvalues, defined by,
$$   \mu_A = \frac{1}{n} \sum_{i=1}^n \delta_{\lambda_i},$$
where $\lambda_1,\ldots ,\lambda_n$ are the eigenvalues of $A$.
\begin{Pro}\label{meanfield}
Let $A_n$ be a sequence of Hermitian matrices such that for any $i\in\{1,\ldots ,n\}$,  $(A_n)_{i,i} =0$. Assume that the empirical distribution of eigenvalues $\mu_{A_n}$ converges weakly to a Dirac at $0$ and that  $\limsup_n n^{-1} \tr A_n^2 <+\infty$. 
Let $\mu$ be the uniform probability  measure on $\{-1,1\}^n$. Then,
$$ \log \int e^{\langle \sigma , A_n\sigma \rangle} d \mu(\sigma)= \sup_{ x \in [-1,1]^n} \big\{ \langle x, A_n x\rangle - \Lambda^*_{\mu}(x)\big\} + o(n).$$

\end{Pro}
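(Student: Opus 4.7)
The plan is to apply Theorem \ref{partitionfunc} to a low-rank truncation of the quadratic form $f(\sigma) = \langle\sigma, A_n\sigma\rangle$, and to match it from below via Lemma \ref{lbpartition}. The key structural input to exploit is that the hypothesis $\mu_{A_n} \to \delta_0$ lets one peel off a small-operator-norm tail of $A_n$, leaving a low-rank remainder whose gradient set sits in a subspace of dimension $o(n)$.

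For each $\varepsilon > 0$ I write the spectral decomposition $A_n = B_n(\varepsilon) + C_n(\varepsilon)$, where $B_n(\varepsilon)$ is the spectral projection onto eigenspaces with $|\lambda_i|>\varepsilon$. Setting $r_n(\varepsilon) := \mathrm{rank}\, B_n(\varepsilon)$, the portmanteau direction applied to the closed set $\{|x|\geq\varepsilon\}$ (which is $\delta_0$-null for $\varepsilon>0$) gives $r_n(\varepsilon) = o(n)$, while $\|C_n(\varepsilon)\|_{\mathrm{op}} \leq \varepsilon$, hence $|\langle\sigma, C_n(\varepsilon)\sigma\rangle| \leq n\varepsilon$ uniformly on $[-1,1]^n$. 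Writing $\bar f_\varepsilon(\sigma) := \langle\sigma, B_n(\varepsilon)\sigma\rangle$, the cost of replacing $f$ by $\bar f_\varepsilon$, both inside the Laplace transform and inside the variational sup, is at most $n\varepsilon$.

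Next I apply Theorem \ref{partitionfunc} to $\bar f_\varepsilon$ with $K = [-1,1]^n$ of diameter $D = 2\sqrt n$. The gradient $\nabla\bar f_\varepsilon(\sigma) = 2B_n(\varepsilon)\sigma$ lives in $\mathrm{range}(B_n(\varepsilon))$, and the convex set $2B_n(\varepsilon)K$ has $\ell^2$-diameter at most $4\sqrt n\,\|B_n(\varepsilon)\|_{\mathrm{op}} \leq 4\sqrt n\sqrt{\tr A_n^2} = O(n)$ by the second-moment hypothesis. A volumetric covering estimate inside this $r_n(\varepsilon)$-dimensional subspace yields $\log|\mathcal{D}_\delta| = O\big(r_n(\varepsilon)\log(n^{3/2}/\delta)\big)$. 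Choosing $\delta_n = \sqrt n$ and then, by a diagonal extraction, $\varepsilon_n \to 0$ slowly enough that $r_n(\varepsilon_n)\log n = o(n)$, all the error terms become $o(n)$ and Theorem \ref{partitionfunc} gives
$$\log \int e^f d\mu \leq \sup_{y\in[-1,1]^n}\{f(y) - \Lambda^*_\mu(y)\} + o(n).$$

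For the lower bound, I invoke Lemma \ref{lbpartition}. Since $\mu$ is the product of uniforms on $\{-1,1\}$, we have $\nabla\Lambda_\mu(\RR^n) = (-1,1)^n$ and for $y$ in this open cube the tilt $\mu_y$ is the product of Bernoullis with means $y_i$. The zero-diagonal hypothesis $(A_n)_{ii}=0$ is exactly what cancels the residual $\sum_i(A_n)_{ii}\,\mathbb E_{\mu_y}[\sigma_i^2]$ term, leaving $\int\langle\sigma, A_n\sigma\rangle d\mu_y = \langle y, A_n y\rangle$; continuity on the compact cube then promotes the resulting sup over $(-1,1)^n$ to a sup over $[-1,1]^n$. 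Combining the two directions gives the claim. The main obstacle is the tension in the diagonal choice of $\varepsilon_n$: applying Corollary \ref{GW} directly to $f$ gives $g(A_n[-1,1]^n) \leq \mathbb E\|A_n\Gamma\|_1 = O(n)$ under the second-moment assumption alone, producing an error of order $n^{1/3}g^{2/3}=n$, which is useless; the low-rank splitting, combined with the portmanteau-based diagonalization in $\varepsilon$, is what allows the covering cost to drop below the large-deviation scale.
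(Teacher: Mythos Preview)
Your proof is correct and follows the same overall architecture as the paper's: Theorem \ref{partitionfunc} for the upper bound and Lemma \ref{lbpartition} for the lower bound. The one genuine difference is in how the covering estimate is obtained. The paper applies Theorem \ref{partitionfunc} directly to $f(\sigma)=\langle\sigma,A_n\sigma\rangle$ and then invokes \cite[Lemma 3.4]{BM} as a black box to bound the $\ell^2$ metric entropy of $A_n([-1,1]^n)$ under the mean-field hypotheses. You instead perturb the function first, replacing $A_n$ by its spectral truncation $B_n(\varepsilon)$ at cost $n\varepsilon$ on both sides, and then exploit that $\nabla\bar f_\varepsilon$ ranges in an $r_n(\varepsilon)$-dimensional subspace to get the covering bound by elementary volume comparison. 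This is essentially what the proof of the cited Basak--Mukherjee lemma does anyway, so your argument is a self-contained unpacking of that citation rather than a different idea; the diagonal choice of $\varepsilon_n\to 0$ with $r_n(\varepsilon_n)\log n=o(n)$ is the standard way to close it.

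For the lower bound you give more detail than the paper, which simply asserts that Lemma \ref{lbpartition} suffices. Your computation $\int\langle\sigma,A_n\sigma\rangle\,d\mu_y=\langle y,A_ny\rangle$ using the zero-diagonal hypothesis and product structure of $\mu_y$ is exactly the missing step, and your continuity remark (that $\Lambda^*_\mu$ extends continuously to the closed cube since $\Lambda^*(\pm 1)=\log 2$ for the Rademacher marginal) correctly justifies passing from $(-1,1)^n$ to $[-1,1]^n$. Your closing observation that Corollary \ref{GW} alone yields only an $O(n)$ error under the second-moment bound is also right, and matches the content of Remark \ref{remRonen} in the paper.
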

\begin{proof}Note that the lower bound 
$$ \log \int e^{\langle \sigma , A_n\sigma \rangle}d\mu(\sigma) \geq \sup_{ x \in [-1,1]^n} \big\{ \langle x, A_n x\rangle - \Lambda^*_{\mu}(x)\big\}, 
$$ 
follows directly  Lemma \ref{lbpartition}.
Let 
$$ \forall x \in \RR^n, \ f(x) = \langle x, A_n x\rangle.$$
The following lemma taken from \cite[Lemma 3.4, Remark 4.1]{BM} computes the complexity of the gradient of $f$ under the mean-field assumption we made.

\begin{Lem}[  {\cite[Lemma 3.4, Remark 4.1]{BM}} ]\label{complexIsing}
Under the assumption of Proposition \ref{meanfield}, for any $\delta>0$, there exists a $\delta\sqrt{n}$-net $\mathcal{E}_{\delta\sqrt{n}}$ for the $\ell^2$-norm of the set $A_n([-1,1]^n)$, such that,
$$ \log |\mathcal{E}_{\delta\sqrt{n}}| = o(n).$$
\end{Lem}

As $\nabla f(x) = 2A_nx$ for any $x\in[-1,1]^n$, applying Proposition \ref{partitionfunc} and using the above lemma, we deduce that for any $\delta>0$, 
\begin{equation} \label{Isingbound}\log \int e^{\langle \sigma, A_n \sigma\rangle} d\mu(\sigma) \leq \sup_{x\in [-1,1]^n}\{ \langle x, A_n x\rangle -\Lambda^*_{\mu}(x) \} + \log |\mathcal{E}_{\delta\sqrt{n}}| + 4\delta n,\end{equation}
which gives the claim.
\end{proof}

\begin{Rem}\label{remRonen}
Instead of using Lemma \ref{complexIsing} to bound the error term in \eqref{Isingbound}, one can use alternatively Corollary \ref{GW}, which gives, 
$$ \log \int e^{\langle \sigma, A_n \sigma\rangle} d\mu(\sigma) \leq \sup_{x\in [-1,1]^n}\{ \langle x, A_n x\rangle -\Lambda^*_{\mu}(x) \} +   \kappa n^{\frac{1}{3}} \big(\EE \sup_{x \in [-1,1]^n } \langle A_n x,\Gamma \rangle\big)^{\frac{2}{3}},$$
where $\kappa>0$ and $\Gamma$ is a standard Gaussian vector in $\RR^n$. But, using twice Cauchy-Schwarz inequality and the fact that $\EE || A_n \Gamma||^2 = ||A_n||_2$, one obtains
$$  \EE \sup_{x \in [-1,1]^n } \langle A_n x,\Gamma \rangle \leq \sqrt{n}||A_n||_2,$$
where $|| \ ||_2$ denotes the Hilbert-Schmidt norm. Thus, 
$$ \log \int e^{\langle \sigma, A_n \sigma\rangle} d\mu(\sigma) \leq \sup_{x\in [-1,1]^n}\{ \langle x, A_n x\rangle -\Lambda^*_{\mu}(x) \} + \kappa(||A_n||_2 n)^{\frac{2}{3}},$$
 thus giving another proof of the result of Jain, Koehler and Risteski \cite{JKR}.

\end{Rem}

\subsubsection{Traces of powers of Wigner matrices}

In this section, we discuss the large deviations of the normalized traces of powers of Wigner matrices with sub-Gaussian entries. Under technical assumptions, we show that we can find large deviation upper and lower bounds, which match in the case where the entries have \textit{sharp sub-Gaussian tails} and have the same covariance structure as the GOE or the GUE. In the latter case, the large deviations are universal, that is, the resulting rate function is the same for all such Wigner matrices, and coincides with the Gaussian case.   
This universality phenomenon was first discovered by Guionnet and Husson in \cite{GH}, in the context of the large deviations of the largest eigenvalue of Wigner matrices.

To state our result we need to introduce some notations. Denote by $\mathcal{H}_n^{(\beta)}$ the set of Hermitian matrices when $\beta=2 $, and symmetric matrices when $\beta=1$, of size $n$, which we will short-hand as $\mathcal{H}_n$ whenever there is no ambiguity.
Say that $X$ is a \textit{Wigner matrix} if $X$ is a random Hermitian matrix with independent coefficients (up to the symmetry) such that both $(X_{i,i})_{1\leq i \leq n}$ and $(X_{i,j})_{i<j}$ are identically distributed,  $\EE X=0$, $(\Re X_{1,2}, \Im X_{1,2})$ are independent and $\EE|X_{1,2}|^2 =1$.  By Wigner's theorem (see \cite[Theorem 2.1.1, Exercice 2.1.16]{AGZ}, \cite[Theorem 2.5]{Silverstein}), we know that
$$\mu_{X/\sqrt{n}} \underset{n\to\infty}{\leadsto} \mu_{sc},$$
in probability, where $\leadsto$ denotes the weak convergence, and $\mu_{sc}$ is the semi-circular law defined by,
$$ \mu_{sc} = \frac{1}{2\pi} \sqrt{4-x^2} \Car_{|x| \leq 2} dx.$$
If we assume moreover that for any $d\in \NN$,
$$ \max\left( \EE|X_{1,1}|^d,\EE|X_{1,2}|^d \right) < +\infty,$$
then we have by \cite[Lemmas 2.1.6,2.1.7]{AGZ}, the convergence of the moments of $\mu_{X/\sqrt{n}}$ towards the ones of the semi-circular law, that is for any $d\in \NN$,
$$\frac{1}{n} \tr (X/\sqrt{n})^d  \underset{n\to+\infty}{\longrightarrow }  \mu_{sc}(x^d),$$
in probability, where $\mu_{sc}(x^d) = \int x^d d\mu_{sc}(x)$. It is known that
$$ \mu_{sc}(x^d) = \begin{cases}
C_{d/2}&\text{ if $d$ is even,}\\
0 & \text{ if $d$ is odd,}
\end{cases} 
$$
where in the case $d$ is even, $C_{d/2} = \frac{1}{d/2+1} \binom{d}{d/2}$ is the $\left(\frac{d}{2}\right)^{\text{th}}$ Catalan number. The question we investigate is the one of the large deviations of the these traces around the moments of the semi-circular law.

We will actually prove a large deviation result under a more restrictive assumption than just the one of having sub-Gaussian entries, as we will need to use further concentration arguments in addition to the result of Theorem \ref{NL}. To this end, we introduce a class of random Hermitian matrices satisfying a certain \textit{convex concentration property}.

\begin{Def}\label{concconv}
We say that a random Hermitian matrix $X$ satisfies the \textit{convex concentration property} if there exists $c>0$ such that if for any $f : \mathcal{H}_n \to \RR$ convex $1$-Lipschitz function with respect to the Hilbert-Schmidt norm, $\EE f(X)$ exists and for any $t>0$,
$$ \PP\big( |f(X) -\EE f(X)| >t \big) \leq c^{-1}\exp\big(- c t^2\big).$$
\end{Def}

\begin{Rem}\label{remconc}
A random Hermitian matrix whose law satisfies a log-Sobolev inequality has normal concentration by \cite[Theorem 5.3]{Ledouxmono}. In particular, such random matrices satisfy the above convex concentration property. Workable criteria for log-concave measures to satisfy a log-Sobolev inequality include the strict uniform convexity of the potential, see \cite[Theorem 5.2]{Ledouxmono}. Due to the tensorization property of the log-Sobolev inequality  (see \cite[Corollary 5.7]{Ledouxmono}), any Wigner matrix with entries satisfying a log-Sobolev inequality has the convex concentration property.

 Another important family of Wigner matrices satisfying this property are the one with bounded entries, due either to Talagrand's inequality \cite[Corollary 4.10]{Ledouxmono} or due to a result by the transportation method \cite[Theorem 8.6]{BLM} (the latter having the advantage of directly measuring deviations from the mean).

\end{Rem}
In the following, we denote by $\Lambda$ the logarithmic Laplace transform of the law of $X$, that is,
$$\forall H\in\mathcal{H}_n, \ \Lambda(H) = \log \EE e^{\tr (XH)},$$
and by $\Lambda_{i,j}$ the one of the law of each of its entries $X_{i,j}$. Finally, we denote by $\Lambda^*$ the Legendre transform of $\Lambda$.

Using Proposition \ref{NL}, together with a truncation argument which enable us to reduce ourself to a function with a low complexity gradient, we obtain the following large deviation result for traces of Wigner matrices.

\begin{The}\label{LDPtrb}Let $d \geq 3$, and $\ell>d$ be an even integer.
Let $X$ be a Wigner matrix satisfying the convex concentration property such that $\Lambda_{1,1}$ and $\Lambda_{1,2}$ have their derivatives of order $2$ to $\ell$ uniformly bounded.  For any closed subset $F$ of $\RR$,
$$\limsup_{n\to +\infty} \frac{1}{n^{1+\frac{2}{d}}} \log \PP \Big( \frac{1}{n} \tr(X/\sqrt{n})^d \in F \Big) \leq - \inf_F I_+,$$
and for any open subset $O$ of $\RR$,
$$\liminf_{n\to +\infty} \frac{1}{n^{1+\frac{2}{d}}} \log \PP \Big( \frac{1}{n} \tr(X/\sqrt{n})^d \in O \Big) \geq - \inf_O I_-,$$
where for any $x\in\RR$,
$$ I_+(x) =  \sup_{\delta>0}\liminf_{n\to +\infty} I_{n,\delta}(x),$$
$$ I_-(x) =\sup_{\delta>0}\limsup_{n\to +\infty} I_{n,\delta}(x),$$
and for any $n\in \NN$, $\delta>0$,
$$ I_{n,\delta}(x) = \inf \Big\{ \frac{\Lambda^*(Y)}{n^{1+\frac{2}{d}}} : |\frac{1}{n} \tr (Y/\sqrt{n})^d + \mu_{sc}(x^d) - x| <\delta, Y \in \mathcal{H}_n \Big\}.$$ 
\end{The}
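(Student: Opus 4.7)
The plan is to obtain the upper bound via Theorem \ref{NL} applied to a spectrally truncated version of $f(X) = n^{-1}\tr((X/\sqrt n)^d)$, and the lower bound via Lemma \ref{borneinfdet} at a rank-one spike tilt. Let $\psi_n = n^{1+2/d}$. The shift by $\mu_{sc}(x^d)$ in the definition of $I_{n,\delta}$ reflects the fact that, in either direction, the bulk eigenvalues of $X$ contribute $\mu_{sc}(x^d) + o(1)$ to $f(X)$ by Wigner moment convergence, so only the residual $x - \mu_{sc}(x^d)$ has to be produced by a cheap large-deviation mechanism (tilting) acting on the top eigenvalues.

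For the upper bound, a direct application of Theorem \ref{NL} to $f$ with tightness level $\kappa \asymp \psi_n$ fails because $\Lambda^*(X) \asymp n^2$ under typical Wigner concentration. Instead, work on the high-probability event $\mathcal{E}_n = \{\|X/\sqrt n\|_{op} \leq M\}$ with $M$ slightly above the semicircle edge $2$; the convex concentration property yields $\PP(\mathcal{E}_n^c) \leq e^{-cn}$, negligible at speed $\psi_n$. On $\mathcal{E}_n$, decompose $X$ spectrally at threshold $T\sqrt n$ with $T$ a slowly growing sequence and split $f = f_{sp} + f_{bk}$ accordingly; by convex concentration applied to a convex-Lipschitz decomposition of the bulk spectral statistic, $f_{bk}(X) = \mu_{sc}(x^d) + o(1)$ with probability $1 - e^{-c\psi_n}$. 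This reduces the problem to controlling $\PP(f_{sp}(X) \in F - \mu_{sc}(x^d) + o(1))$, to which Theorem \ref{NL} applies: the gradient of $f_{sp}$ is the low-rank matrix $d\,X_{>T}^{d-1}/n^{1+d/2}$, and spectral truncation combined with Sudakov's inequality as in Corollary \ref{GW} gives a covering of log-cardinality $o(\psi_n)$. The Theorem \ref{NL} rate at $t$, namely $\inf\{\Lambda^*(Y)/\psi_n : f_{sp}(Y) = t\}$, is achieved by a rank-one spike, and post-composition with the $\mu_{sc}(x^d)$ shift yields exactly $I_+(x)$.

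For the lower bound, fix $x \in O$ and $\delta > 0$, and choose a near-minimizing $Y_n \in \mathcal{H}_n$ with $|n^{-1}\tr(Y_n/\sqrt n)^d + \mu_{sc}(x^d) - x| < \delta$ and $\Lambda^*(Y_n)/\psi_n$ within $o(1)$ of $I_-(x)$. By sub-Gaussianity $\Lambda^* \gtrsim \|\cdot\|_{HS}^2$ and the power-mean inequality applied to the constraint on $\sum \lambda_i(Y_n)^d$, the minimizer is essentially rank-$O(1)$ with spikes of magnitude $n^{1/2+1/d}$. Set $V = \{Z : |f(Z) - x| < \eta\} \subset O$ and apply Lemma \ref{borneinfdet} with $\lambda_n = \nabla \Lambda^*(Y_n)$: the correction $\langle \lambda_n, \nabla^2\Lambda(\lambda_n)\lambda_n\rangle^{1/2} = O(\sqrt{\psi_n})$ is absorbed in $o(\psi_n)$. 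Under $\mu_{Y_n}$, write $X = Y_n + W_n$ with $W_n$ centered; the tilted law still satisfies the convex concentration property since a linear tilt preserves it up to constants. A moment expansion combined with BBP-type spike asymptotics yields $\EE_{\mu_{Y_n}} f(X) = n^{-1}\tr(Y_n/\sqrt n)^d + \mu_{sc}(x^d) + o(1) = x + o(1)$, while convex concentration of a Lipschitz decomposition of $f$ restricted to $\{\|X\|_{op} \lesssim n^{1/2+1/d}\}$ controls the fluctuations, so $\mu_{Y_n}(V) \to 1$ and Lemma \ref{borneinfdet} yields $\PP(X \in V) \geq e^{-I_-(x)\psi_n + o(\psi_n)}$.

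The main obstacles are both technical. First, in the upper bound, the spectral decomposition $f = f_{sp} + f_{bk}$ must be handled carefully since spectral projections are non-smooth: a regularized cut-off function is needed, and $f_{bk}$ must be shown to be effectively convex Lipschitz so that the convex concentration assumption of Definition \ref{concconv} applies. Second, in the lower bound, the identification $\EE_{\mu_{Y_n}}\tr((Y_n + W_n)^d) = \tr(Y_n^d) + n^{1+d/2}\mu_{sc}(x^d) + o(n^{1+d/2})$ requires suppressing the cross terms $\EE\tr(Y_n^{k_1}W_n^{l_1}\cdots)$, for which both the low rank of $Y_n$ and the closeness of the tilted entries of $W_n$ to genuine Wigner entries, via the bounded $\Lambda''_{i,j}$, are essential.
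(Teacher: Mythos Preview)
Your upper-bound strategy contains a genuine gap. Restricting to $\mathcal{E}_n = \{\|X/\sqrt n\|_{op}\le M\}$ with $M$ just above $2$ is fatal for two reasons. First, $\PP(\mathcal{E}_n^c)\le e^{-cn}$ is \emph{not} negligible at speed $\psi_n=n^{1+2/d}$: one has $\psi_n^{-1}\log e^{-cn}\to 0$, not $-\infty$. Second, and more basically, the large-deviation event $\{f(X)\in F\}$ with $F$ away from $\mu_{sc}(x^d)$ forces eigenvalues of order $n^{1/d}$ on the $X/\sqrt n$ scale, so it lives entirely on $\mathcal{E}_n^c$; on $\mathcal{E}_n$ your $f_{sp}$ vanishes and there is nothing to estimate. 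The paper does not bound the operator norm. It fixes $k$ with $n^{1/(d-1)}=o(k)$ and $k\log n=o(n^{2/d})$, defines $f_k$ as the sum of the $k$ largest eigenvalue powers, and proves (via the convex concentration of the truncated linear statistic $\tr_{[k]} f_M(X/\sqrt n)$ and a bound $\PP(\mathcal N[M\sqrt n,\infty)\ge k)\le e^{-cM^2k}$) that $f_k(X)+\mu_{sc}(x^d)$ is an exponential equivalent of $f(X)$ at speed $\psi_n$. Theorem~\ref{NL} is then applied with $K=\{\Lambda^*\le 8n^2\}$ (so $\kappa\asymp n^2$, not $\psi_n$; this is harmless since $\kappa$ enters only logarithmically), and the increment bound comes from the variational representation of $\tr_{[k]}$, giving $W=\{\mathrm{rank}\le 2k,\ \|\cdot\|\le cn^{d-1}\}$. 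The covering is computed directly as $\log N=O(nk\log n)=o(\psi_n)$; Sudakov/Gaussian-width is not used here and would be too coarse.

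Your lower bound is closer in spirit but also has gaps. You apply Lemma~\ref{borneinfdet} with $\lambda_n=\nabla\Lambda^*(Y_n)$, but $Y_n$ need not lie in the interior of $\mathcal D_{\Lambda^*}$ (e.g.\ for bounded entries); the paper first adds a small GOE/GUE perturbation $\eps\Gamma$ so that $\Lambda^*_\eps$ has full domain, and shows this is an exponentially good approximation. The claim that a linear tilt preserves the convex concentration property ``up to constants'' is not justified and the paper does not use it. Instead it uses the assumption that $\Lambda_{1,1}'',\Lambda_{1,2}''$ (and higher derivatives up to order $\ell$) are bounded: this gives $\nabla^2\Lambda_\eps\le r\,\mathrm{Id}$, hence $\langle H,\nabla^2\Lambda_\eps(H)H\rangle=O(\Lambda^*(Y))$, and forces the tilted variances $\sigma_{i,j}^2$ to satisfy $\sum_{i,j}(\sqrt{1+\eps^2}-\sigma_{i,j})^2=o(n^2)$. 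The concentration $\PP_{Y_n}(V)\to 1$ then follows from a deterministic comparison of $\tr(\hat X+\EE X)^d$ with $\tr\hat X^d+\tr(\EE X)^d$ via Schatten--H\"older (this is where $\ell>d$ even is used, to bound $\|\hat X\|_{d+1}\le\|\hat X\|_\ell$), together with Wigner's theorem for the variance-perturbed centered matrix---not from BBP or from convex concentration under the tilt. Finally, the paper does not reduce to rank-$O(1)$ minimizers; it works with an arbitrary near-minimizing sequence $Y_n$.
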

\begin{Rem}
Together with the previous Remark \ref{remconc}, we see that any Wigner matrix with bounded entries satisfies the assumptions of Theorem \ref{LDPtrb}.

\end{Rem}

We will now specify our result to Wigner matrices with sharp sub-Gaussian tails. Following Guionnet and Husson \cite{GH}, we say that a real or complex random variable $\xi$ has \textit{sharp sub-Gaussian tails} if,
$$ \forall z \in \CC, \  \Lambda_{\xi}(z) = \log  \EE e^{\Re(z \xi)} \leq \frac{1}{2}\langle z, \Sigma z\rangle,$$
where $\Sigma$ is the covariance matrix of $(\Re \xi, \Im \xi)$, and $\langle . , . \rangle$ is the standard inner product in $\CC$.

\begin{Cor}\label{sharpsubGtr}
Let $d\in \NN, d\geq 3$.  Let $X$ be a Wigner matrix such that $\EE X_{1,1}^2 \leq 2/\beta$. Assume that $X$ is real symmetric if $\beta =1$, and if $\beta=2$, that $(\Re X_{1,2}, \Im X_{1,2})$ are independent each of variance $1/2$. In addition to the assumptions of Theorem \ref{LDPtrb}, assume that the entries of $X$ have sharp sub-Gaussian tails. 
The sequence $(\frac{1}{n}\tr(X/\sqrt{n})^d)_{N\in \NN}$
satisfies a LDP with speed $n^{1+\frac{2}{d}}$, and good rate function $J_d$.
If $d$ is even, $J_d$ is given by,
$$\forall x \in \RR, \ J_d(x) = \begin{cases}
\frac{\beta}{4} \left( x-C_{d/2}\right)^{\frac{2}{d}}& \text{ if } x\geq C_{d/2},\\
+\infty & \text{ otherwise,}
\end{cases}
$$
where $C_{d/2}$ denotes the $\left( \frac{d}{2}\right)^{\text{ th}}$ Catalan number, and if $d$ is odd, 
$$\forall x \in \RR, \ J_d(x) = \frac{\beta}{4}
\left| x\right|^{\frac{2}{d}},$$
where $\beta =1$ if $X$ is real symmetric and $\beta=2$ if $X$ is complex Hermitian.
\end{Cor}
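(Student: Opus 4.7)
By Theorem \ref{LDPtrb}, the sequence $(\tfrac{1}{n}\tr(X/\sqrt{n})^d)_n$ already obeys LDP upper and lower bounds with rate functions $I_+$ and $I_-$ respectively. Since $\liminf_n \leq \limsup_n$ pointwise gives $I_+(x) \leq I_-(x)$, the corollary reduces to the sandwich
$$ J_d(x) \leq I_+(x) \leq I_-(x) \leq J_d(x), \qquad x \in \RR,$$
after which the LDP follows; goodness of $J_d$ is immediate from the explicit formula since its level sets are bounded intervals.

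For the inequality $I_+(x) \geq J_d(x)$, I would first derive from the sharp sub-Gaussian assumption, together with $\EE X_{1,1}^2 \leq 2/\beta$ and $\EE|X_{1,2}|^2 = 1$, the entrywise upper bound $\Lambda(H) \leq \tfrac{1}{\beta}\tr(H^2)$ on $\mathcal{H}_n^{(\beta)}$, whose Legendre transform yields the universal lower bound $\Lambda^*(Y) \geq \tfrac{\beta}{4}\tr(Y^2)$. Combining this with the Schatten power-mean inequality $|\tr(Y^d)| \leq \tr(Y^2)^{d/2}$ (valid for $d \geq 2$) and the constraint defining $I_{n,\delta}(x)$, one gets $\tr(Y^2)/n^{1+2/d} \geq (|t|-\delta)_+^{2/d}$ with $t = x - \mu_{sc}(x^d)$, whenever the constraint set is nonempty. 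In the even $d$ case with $x < C_{d/2}$, the constraint is infeasible for $\delta < -t$ since $\tr(Y^d)\geq 0$, so $I_{n,\delta}(x) = +\infty$. Passing to $\liminf_n$ and $\sup_{\delta>0}$ produces $I_+(x) \geq J_d(x)$ in every case.

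For $I_-(x) \leq J_d(x)$, I would exhibit an explicit delocalized rank-one minimizer. With $u_n = \mathbf{1}_n/\sqrt{n}$, $\alpha_n = \mathrm{sgn}(t)\,|t|^{1/d}\,n^{1/2+1/d}$ and $Y_n = \alpha_n u_n u_n^\top \in \mathcal{H}_n^{(\beta)}$, one has $\tfrac{1}{n}\tr((Y_n/\sqrt{n})^d) = t$ exactly, so $Y_n$ witnesses the constraint of $I_{n,\delta}$ for every $\delta > 0$. Independence of the entries factorizes the Legendre transform:
$$ \Lambda^*(Y_n) = n\,\Lambda^*_{1,1}(\alpha_n/n) + \tbinom{n}{2}\,\Lambda^*_{1,2}(\alpha_n/n)$$
(with the obvious real-$Y$ reduction when $\beta = 2$). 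Since $d \geq 3$ forces $\alpha_n/n \sim n^{1/d-1/2} \to 0$, the smoothness of $\Lambda_{1,1}, \Lambda_{1,2}$ (an assumption carried over from Theorem \ref{LDPtrb}) yields the Taylor expansion $\Lambda^*_{1,j}(y) = y^2/(2\sigma_{1,j}^2)\,(1 + O(|y|))$. The diagonal contribution $n\,\Lambda^*_{1,1}(\alpha_n/n) = O(\alpha_n^2/n)$ is negligible, while the $\binom{n}{2}$ off-diagonal terms combine, using $\EE|X_{1,2}|^2 = 1$, into $\tfrac{\beta}{4}\alpha_n^2(1+o(1))$. Dividing by $n^{1+2/d}$ yields exactly $\tfrac{\beta}{4}|t|^{2/d} = J_d(x)$.

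The main obstacle is the tightness of $\Lambda^*$ on rank-one spikes. Sharp sub-Gaussianness only provides the \emph{lower} bound $\Lambda^* \geq \tfrac{\beta}{4}\tr(Y^2)$, and in general no matching pointwise upper bound holds (e.g.\ Rademacher $\pm 1$ entries satisfy $\log\cosh \leq \lambda^2/2$ but the corresponding $\Lambda^*$ strictly exceeds the Gaussian rate near the boundary). Universality is rescued by the delocalization of $Y_n$: all entries are of the vanishing scale $\alpha_n/n \to 0$, on which the entrywise Legendre transform agrees with its Gaussian counterpart up to quadratic order regardless of higher cumulants, the cubic errors summing to $O(\alpha_n^3/n) = o(\alpha_n^2)$ precisely because $d \geq 3$. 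The finer value of the diagonal variance is irrelevant since only $n$ diagonal terms enter versus the $\binom{n}{2}$ off-diagonal ones that supply the leading constant.
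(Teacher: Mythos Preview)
Your proposal is correct and follows essentially the same route as the paper: for $I_+\geq J_d$ you use the sharp sub-Gaussian bound $\Lambda(H)\leq \tfrac{1}{\beta}\tr H^2$ to get $\Lambda^*(Y)\geq \tfrac{\beta}{4}\tr Y^2$ together with $|\tr Y^d|\leq (\tr Y^2)^{d/2}$, and for $I_-\leq J_d$ you exhibit a delocalized test matrix with entries of order $n^{1/d-1/2}$ and Taylor-expand the entrywise Legendre transform at $0$. The only cosmetic difference is your choice of the rank-one $\alpha_n u_n u_n^\top$ (with nonzero diagonal) versus the paper's zero-diagonal matrix $y(\mathbf{1}\mathbf{1}^* - I)$; both have all entries of the same vanishing scale and the diagonal contribution is negligible either way.
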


\begin{Rem}
 Distributions which have sharp sub-Gaussian tails include the Rademacher distribution $\frac{1}{2} \delta_{1} + \frac{1}{2} \delta_{-1}$ and uniform probability measures on intervals symmetric around the $0$, see \cite[Examples 1.2]{GH}.
Therefore, the above LDP holds in particular for Wigner matrices with Rademacher entries, or uniformly sampled in $[-\sqrt{3},\sqrt{3}]$.

\end{Rem}
\subsubsection{Upper tail of cycle counts in sparse  Erd\H{o}s--R\'enyi graphs}
 Let $X$ be the adjacency matrix of an  Erd\H{o}s--R\'enyi graph on $n$ vertices with parameter $p_n=p$. More precisely, we assume that $X$ is a symmetric matrix, such that  $(X_{i,j})_{i<j}$ are independent Bernoulli random variables of parameter $p$, whereas $X_{i,i}=0$ for any $i\in \{1,\ldots,n\}$.

For a given finite graph $H$, denote by $X_H$ the number of copies of $H$ in the  Erd\H{o}s--R\'enyi graph. A general question is to understand the large deviation of $X_H$ of the order of its expectation when $n$ goes to $+\infty$. When $p$ is fixed, the large deviation of these sub-graph counts are well understood now due to the work of Chatterjee and Varadhan \cite{CV} on the large deviations of the  Erd\H{o}s--R\'enyi graph for the cut-metric. 

When $p\ll 1$, a first question is to estimate the order of the upper tail, that is, for $u \geq 1$,
 $$ \PP( X_H \geq u \EE X_H).$$
In the case of triangles $H = K_3$, it was proven in a series of papers \cite{Vugraph}, \cite{KV}, \cite{Chatterjeetriangle}, \cite{DK1}, that for any $p\geq 1/n$,
$$ \log \PP( X_{K_3} \geq u \EE X_{K_3})  \asymp  -\min(n^2 p^{2}\log(1/p), n^3p^3),$$
and then generalized in \cite{DK} for cliques of arbitrary size. For general graphs $H$, the order of the upper tail has been computed up to some $\log (1/p)$ factor, by Janson, Oleszkiewicz and Ruci\'{n}ski  in \cite{JOR}. In particular, denoting by $\Delta$ is the highest degree in $H$, they proved in \cite[Theorems 1.2,1.5]{JOR}, that 
$$ -n^2 p^{\Delta}\log(1/p) \lesssim  \log \PP( X_{H} \geq u \EE X_{H})  \lesssim  -n^2 p^{\Delta},$$
for $p\geq n^{-1/\Delta}$.
Still, the exact order of this tail is not fully understood for small edge-probability, as the order conjectured in \cite{DK} by Kahn and DeMarco was recently disproved by \v Sileikis and Warnke in \cite{SW}.


Working instead with homomorphism densities, that is, if $H$ has vertex set $V$ and edge set $E$, 
$$ t(H,X) = \frac{1}{n^{|V|}} \sum_{ \underline{i} \in \{1,...,n\}^V} \prod_{(v,w) \in E} X_{\underline{i}(v), \underline{i}(w)},$$
Chatterjee and Dembo showed in \cite{CD} that the large deviations of $t(H,X)$ fall into their framework of nonlinear large deviations. The large deviation upper tail is understood by a certain variational problem,
\begin{equation} \label{equiv}  -\log \PP\big( t(H,X) \geq u \EE t(H,X) \big)  \sim \phi_{n,H}(u),\end{equation}
with
\begin{equation} \label{pbvar} \phi_{n,H}(u) = \inf \big\{ \Lambda^*_p(Y) : t(H,Y) \geq u \EE t(H,Y),  \ Y\in \mathcal{H}_n^0 \big\},\end{equation}
where $\mathcal{H}_n^0$ is the set of symmetric matrices of size $n$ with null diagonal coefficients, and  $\Lambda^*_p(Y) = \sum_{i<j} I_p(Y_{i,j})$, 
where $$\forall x \in [0,1], \ I_p(x) = x \log \frac{x}{p} +(1-x)\log \frac{1-x}{1-p},$$
 and $+\infty$ otherwise. The equivalent \eqref{equiv} was shown to hold in \cite[Theorem 1.2]{CD} for $p\geq n^{-\alpha(H)}$, for some explicit constant $\alpha(H) >0$ depending on $H$. In the case where $H=K_3$, the threshold of validity of \eqref{equiv} found was $n^{-1/42}$, up to logarithmic factor, and was pushed later on by Eldan in \cite{Eldan} to $n^{-1/18}$. Yet, neither of these thresholds are expected to be optimal, and in this case of triangle counts, it is conjectured that \eqref{equiv} holds as soon as the variational problem \eqref{pbvar} gives the right order of the upper tail, that is, with the help of \cite{LZ}, $n p\gg \log n$. 

Very recently, Cook and Dembo \cite{CoD} proved that the nonlinear large deviations of $t(H,X)$ holds for the range $n^{-1/(5\Delta -4)} \ll p \ll 1$, and more strongly, in the case of $d$-cycles counts for $n^{-1/2}  \log n \ll p \ll 1$ when $d\geq 4$, and $ n^{-1/3}   \ll p \ll 1$ when $d=3$. 

We will give an alternative proof of their result on cycle counts. In the following proposition, we push the estimation of the upper tail for sparsity parameters satisfying $n^{-1/2} \log^4 n \ll p \ll 1$, thus improving on Cook and Dembo's result in the case $d=3$.

\begin{Pro}\label{cycleup}
Let $p$ such that $p=o(1)$ and $\log^4n =o(np^2)$. Denote by  $v_n = n^2p^2 \log(1/p)$.
For any $t\geq 1$,
$$ \log\PP\big( \tr( X^d )\geq t n^dp^d \big) \leq -  \phi_n(t) + o(v_n),$$
where $$ \phi_n(t) = \inf \big\{ \Lambda^*_p(Y)  : \tr (Y^d) \geq t (np)^d, Y\in\mathcal{H}_n^0 \big\}.$$ 
\end{Pro}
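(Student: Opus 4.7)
Plan. The plan is to apply Corollary~\ref{CorNL} to the random vector $X$ of independent Bernoulli$(p)$ entries (the upper-triangular part of the adjacency matrix) and to the function $f(Y) = \tr(Y^d)$ on $\mathcal{H}_n^0$. Under this identification, the corollary's rate function $\phi$ satisfies $\phi(s (np)^d) = \phi_n(s)$ for any $s$, so up to a perturbation $\delta$ absorbed by a quantitative continuity of $\phi_n$ (verified by a small rescaling of a near-optimal trial matrix), the proposition reduces to showing that the additive error $\log|\mathcal{D}_{\delta/2}| + \log((\kappa L D/\delta)\vee 1) + 2$ in the corollary is $o(v_n)$ for suitable choices of the typical set $K$, the mean-value set $W$, and the accuracy scale $\delta$.

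For the tightness I would take $K = \{Y \in \mathcal{H}_n^0 : 0 \leq Y_{ij} \leq 1,\ \Lambda_p^*(Y) \leq \kappa\}$ with $\kappa$ a sufficiently large multiple of $v_n$; Remark~\ref{prod} applied to the product Bernoulli measure yields $\PP(X \notin K) \leq e^{-\kappa/12}$, which gives \eqref{tightness} because $\phi_n(t-\delta) = O(v_n)$. For the mean-value condition \eqref{meanvalueass} I would use the telescoping identity
$$\tr(Y^d) - \tr(Z^d) = \Big\langle \sum_{k=0}^{d-1} Y^{d-1-k} Z^k,\ Y - Z\Big\rangle_{\mathrm{HS}},$$
valid for any symmetric $Y, Z$, so that one may take $W$ to be the convex hull of the set of matrices $\sum_{k=0}^{d-1} Y^{d-1-k} Z^k$ with $Y, Z \in K$.

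The main obstacle, and the place where the sparsity threshold $n^{-1/2}$ becomes critical for this method, is the metric entropy bound $\log|\mathcal{D}_{\delta/2}| = o(v_n)$ for the set $W$ described above. A naive entrywise discretization is far too expensive because the constraint $\Lambda_p^*(Y) \leq \kappa$ permits, in particular, the planting of a hub row of $\ell^1$-degree up to order $\sqrt{n}$, which inflates the entries of $Y^{d-1}$ beyond any affordable scale. The approach I would follow, in the spirit of \cite{CoD} and refining it down to the $n^{-1/2}$ regime, is a two-scale decomposition $Y = Y^{\flat} + Y^{\sharp}$ in which $Y^{\sharp}$ collects the rows whose $\ell^1$-norm exceeds a threshold $\tau$, calibrated so that simultaneously (i) the number of such hub rows is bounded via the $\Lambda_p^*$ budget, (ii) the cost of discretizing the $\ell^1$-content of the hub rows is $o(v_n)$, and (iii) the pure low-degree product $(Y^{\flat})^{d-1-k}(Z^{\flat})^k$ has controlled Hilbert--Schmidt norm (via the row-sum bound $\|Y^{\flat}\|_{\mathrm{op}} \leq \tau$) so that a Hilbert--Schmidt covering at the scale $\delta/D$ demanded by the corollary is of log-cardinality $o(v_n)$. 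Expanding $\sum_{k} Y^{d-1-k} Z^k$ along this decomposition into $O(2^d)$ mixed pieces and taking the Minkowski sum of the individual nets yields the desired cover of $W$. The hypothesis $\log^4 n = o(np^2)$ is precisely what makes the hub entropy and the bulk entropy simultaneously subdominant to $v_n$ at the optimal choice of $\tau$; conversely, it is this interlocking calibration that I expect to be the most delicate technical step.
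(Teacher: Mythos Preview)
Your route is genuinely different from the paper's. Rather than applying Corollary~\ref{CorNL} to $\tr(Y^d)$ directly and controlling the entropy of the full gradient set via a hub/bulk decomposition of $K$, the paper first replaces $\tr(X^d)$ by a spectrally truncated version $g_k(X)=\tr_{[k]}(X^d)$ (or $\tr_{[k]}(X_+^d)-\tr_{[k]}(X_-^d)$ for $d$ odd) retaining only the $k$ extreme eigenvalues, and shows via concentration (Lemma~\ref{equiER}, built on Propositions~\ref{conckconv} and~\ref{probatrou}) that the remainder is negligible at scale $v_n$. It then applies Corollary~\ref{CorNL} to $g_k$, taking simply $K=\mathcal{H}_n^0([0,1])$ and $\kappa=n^2\log(1/p)$: by Lemma~\ref{fnsup} the increments of $g_k$ are governed by $W=\{H:\rk H\leq 2k,\ \|H\|\leq 2dn^{d-1}\}$, whose log-covering number is $O(nk\log n)$ by the elementary Lemma~\ref{complexity}. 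The hypothesis $\log^4 n=o(np^2)$ is exactly what allows a choice of $k$ with $\log^4(1/p)=o(k)$ (needed for the truncation in Lemma~\ref{equiER}) and $k\log n=o(v_n)$ (for the entropy) simultaneously. In short, the paper puts the complexity reduction into the \emph{function} via spectral truncation, obtaining a low-rank gradient set whose entropy is trivial to compute, whereas you put it into the \emph{typical set} $K$ via a row-degree decomposition and must then cover a full-rank gradient set.

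The concern with your plan is the case $d=3$. Your hub/bulk strategy is precisely the Cook--Dembo approach the paper is offering an alternative to; for $d\geq4$ their method already reaches $p\gg n^{-1/2}\log n$, so your sketch is plausible there, but for triangles it stalls at $p\gg n^{-1/3}$, and pushing to $n^{-1/2}$ is exactly the paper's contribution. You assert that the calibration of $\tau$ under $\log^4 n=o(np^2)$ makes the hub and bulk entropies simultaneously subdominant, but you do not carry out the calculation, and it is not at all clear that a purely row-based decomposition of $\sum_k Y^{d-1-k}Z^k$ can be made to work at this sparsity for $d=3$; this is the step you yourself flag as most delicate, and it is the whole difficulty. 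The paper's spectral truncation sidesteps the issue entirely, since the rank-$2k$ structure of $W$ is uniform in $d\geq3$ and does not see the combinatorics of hub vertices.
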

The proof, as for the traces of Wigner matrices, rely on a truncation argument which enables us to  lower the complexity of our function of interest and apply efficiently Proposition \ref{NL}. 

In \cite[Theorem 1.5]{BGLZ} (or \cite[Theorem 1.1]{LZ} for the case $d=3$) the variational problem was solved asymptotically.  They showed that for $np\gg 1$,
\begin{equation} \label{defPhic} \frac{\phi_n(t)}{v_n}  \underset{n\to +\infty}{\longrightarrow} \Phi(t),\end{equation} where $\Phi$ is given by, 
\begin{equation} \label{defPhi} \Phi(t) = \begin{cases}
\min\big( \theta_t, \frac{1}{2}(t-1)^{\frac{2}{d}}\big) &\text{ if } p \gg n^{-1/2},\\
\frac{1}{2}(t-1)^{\frac{2}{d}} & \text{ if } n^{-1} \ll p \ll n^{-1/2},
\end{cases}\end{equation}
with $\theta_t$ the unique solution of the equation $P_{C_d}(\theta_t) = t$, where $P_{C_d}$ is the independence polynomial of the $d$-cycle.
The optimizers correspond on one hand to planting a clique of order $np (t-1)^{1/d}$, which gives the $\frac{1}{2} (t-1)^{2/d}$ term, and on the other hand to planting an anti-clique of order  $np^2 \theta_{t}$ which corresponds to the other $\theta_t$ term. 

With this knowledge of the optimizers, one can obtain the complementary lower bound, so that together with Proposition \ref{cycleup}, we get, for any $t\geq 1$,
$$ \lim_{n\to +\infty} \frac{1}{v_n} \log\PP\big( \tr (X^d) \geq t n^dp^d \big) =-  \Phi(t),$$
for any $n^{-1/2}\log^4 n \ll p\ll 1$.  The edge-probability $n^{-1/2}$ appears to be critical for cycles in different ways. Note that below this threshold, the anti-clique construction is no longer available. Moreover, for this parameter,  the order at the exponential scale of the upper tail approaches $n$. From the point of view of the truncation method we are using, the speed $n$ arises as being impassable (without further dimension reduction): the complexity of the gradient of a truncated trace is at least the one of the $(n-1)$-dimensional sphere, as it corresponds to pick the top eigenvector.

 Furthermore, the threshold $n^{-1/2}$ is also critical in the sense that one observes a change of speed in the large deviation lower tail of cycles counts. Indeed, we know by the work of  Janson,  {\L}uczak and Ruci{\'n}sk \cite{JLR}, and Janson and Warnke \cite{JW}, that in the regime where $p=o(1)$, for any finite graph $H$, and any $0< u \leq 1$,
$$ \log \PP( X_H \leq u \EE X_H) \asymp -(1-u)^2 \Phi_H,$$
where $\Phi_H = \min \{ \EE X_J : J \text{ sub-graph of } H \}$. Thus, the speed of the lower tail is driven by the ``least expected'' sub-graph of $H$. For example for $H = K_3$, this implies that,
$$ \log \PP( X_H \leq (1-u) \EE X_H) \asymp -u^2 \min(n^3p^3, n^2p).$$
Thus a change of speed happens at $n^{-1/2}$, and similarly for cycles of greater lengths.

In \cite[Theorem 1.2]{CoD}, the authors provided sharp lower tail estimates of homomorphisms densities of $d$-cycles for sparsity parameters $p\gg n^{-c}$ (up to logarithmic corrections), with $c=\frac{d-2}{2(d-1)}$.    We mention also that an entropic perspective on the estimation of the lower tail of triangle counts has been announced by Kozma and Samotij \cite{KS} all the way down to  $p\gg n^{-1/2}$. 

\section{Nonlinear large deviation upper bounds}\label{upper}
We give here a proof of the two Theorems \ref{partitionfunc} and \ref{NL}. 
\begin{proof}[Proof of Proposition \ref{partitionfunc}]
Let $x,y\in K$. We have by the mean-value theorem,
\begin{align*} f(x) &= f(x) -f(y) + \Lambda^*_{\mu}(y) + f(y) - \Lambda^*_{\mu}(y)\\
& \leq \sup_{t\in K} \langle \nabla f(t),x-y\rangle +\Lambda^*_\mu(y) +\sup_{z \in \RR^n} \{ f(z) -\Lambda^*_\mu(z) \}.
\end{align*}
Denote by $W$ the convex hull of $\nabla f(K)$ and $h_W$ its support function, that is, $h_W(y) = \sup_{\lambda \in W} \langle \lambda , y\rangle$ for any $y\in \RR^n$. With this notation, we get,
$$ f(x) \leq h_W(x-y) +\Lambda^*_\mu(y) + \sup_{z \in \RR^n} \{ f(z) -\Lambda^*_\mu(z) \}.$$
Optimizing in $y$, we deduce that
$$ f(x) \leq  \inf_{ y\in K} \{ h_W(x-y) +\Lambda^*_\mu(y) \}+ \sup_{z \in \RR^n} \{ f(z) -\Lambda^*_\mu(z) \},$$
Using the minimax theorem (see \cite[Theorem 4.36]{Clarke}), we have
\begin{align*}
 \inf_{ y\in K} \{ h_W(x-y) +\Lambda^*_\mu(y) \} &= \sup_{\lambda \in W} \inf_{y\in K} \{\langle \lambda, x-y\rangle +\Lambda^*_\mu(y) \}\\
& =  \sup_{\lambda \in W} \{ \langle \lambda, x\rangle -\Lambda_\mu(\lambda)\},
\end{align*}
where we used the fact that $\Lambda^*_\mu(y) = +\infty$ if $y \notin K$ by the Hahn-Banach theorem, and that $\Lambda_\mu$ is the Legendre transform of $\Lambda^*_{\mu}$ by \cite[Theorem 4.21]{Clarke}.
Therefore,
$$f(x) \leq \sup_{\lambda \in W} \{\langle \lambda, x\rangle - \Lambda_\mu(\lambda) \} +  \sup_{z \in \RR^n} \{ f(z) -\Lambda^*_\mu(z) \}.$$
Thus,
\begin{equation} \label{laststep} \log \int e^f d\mu \leq \log \int e^{ \sup_{\lambda\in W} \{ \langle \lambda, x\rangle - \Lambda_{\mu}(\lambda)\}} d\mu(x) + \sup_{z \in \RR^n} \{ f(z) -\Lambda^*_\mu(z) \}\end{equation}
Note that $\nabla \Lambda_\mu(\RR^n) \subset K$ since any point in $\nabla \Lambda_\mu(\RR^n) $ is a barycenter of a probability measure supported on $K$  by \eqref{barycentermuh}. Therefore, for any $x\in K$, the function $\lambda \mapsto \langle \lambda, x\rangle - \Lambda_{\mu}(\lambda)$ is $D$-Lipschitz with respect to the $\ell^2$-norm. Let now $\delta>0$ and let $\mathcal{D}_{\delta}$ be a $\delta/D$-net of $V$. Then, 
$$ \log \int e^{ \sup_{\lambda\in W} \{ \langle \lambda, x\rangle - \Lambda_{\mu}(\lambda)\}} d\mu(x) \leq \delta + \log \int e^{ \sup_{\lambda\in \mathcal{D}_{\delta}} \{ \langle \lambda, x\rangle - \Lambda_{\mu}(\lambda)\}} d\mu(x).$$
Using a union bound, we get
$$ \log \int e^{ \sup_{\lambda\in \mathcal{D}_{\delta}} \{ \langle \lambda, x\rangle - \Lambda_{\mu}(\lambda)\}} d\mu(x) \leq 
\log |\mathcal{D}_{\delta}|,$$
which yields the claim.

\end{proof}

\begin{Rem}
In the case $\supp \mu \subset [0,1]^n$, one can get a potentially better bound by using not a net for the $\ell^2$-norm but for the $\ell^1$-norm instead. Indeed, for any $i\in\{1,\ldots,n\}$, and $\lambda \in \RR^n$, 
$$ \partial_i \Lambda(\lambda) = \int \langle x, e_i\rangle d \mu_y(x),$$
with $y= \nabla \Lambda(\lambda)$ and $\mu_y$ is defined in \eqref{defmuh}. Therefore, if $\supp \mu \subset [0,1]^n$, $\partial_i \Lambda(\lambda) \in [0,1]$. Thus, for fixed $x \in \supp \mu$, the function $\lambda \mapsto \langle \lambda, x\rangle -\Lambda_\mu(\lambda)$ is $1$-Lipschitz w.r.t the $\ell^1$-norm. In the last step \eqref{laststep} of the proof of Proposition \ref{partitionfunc}, if one takes $\delta>0$ and $\mathcal{F}_\delta$ to be a $\delta$-net for the $\ell^1$-norm, one obtains,
$$ \log \int e^{ \sup_{\lambda\in V} \{ \langle \lambda, x\rangle - \Lambda_{\mu}(\lambda)\}} d\mu(x) \leq \delta + \log |\mathcal{F}_\delta|.$$
This yields an error term in the approximation of the partition function getting closer to the one recently found in \cite[Proposition 5.1]{A} as a consequence of a structural result on probability measures on product spaces.
\end{Rem}

\begin{Rem} \label{gap1} As observed in Remark \ref{gappartition}, we see that the above proof of Proposition \ref{partitionfunc} yields an approximation of the partition function which seems a bit off from the one Lemma \ref{lbpartition} proposes.

In general, it is expected that when $f$ has a gradient of low complexity, this gap becomes negligible, that is, for any $y$ in an appropriate level set of $\Lambda^*_\mu$,
$$ \int f d\mu_y \simeq  f(y),$$
where $\mu_y$ is defined in \eqref{defmuh}.
 Indeed, if $\mu$ is the $n$-fold product measure $\nu^n$, with $\nu$ compactly supported for example, we note that by the mean-value theorem, we have,
$$ \big| \int f d\mu_y - f(y)\big| \leq \int \sup_{t \in K} |\langle \nabla f(t), x-y\rangle | d\mu_y(x),$$
where $K$ denotes the convex hull of the support of $\mu$. But, as $\mu_y$ has barycenter $y$, and is also a product measure with marginals having the same support as the one of $\nu$, we can deduce by the Majorization Theorem (see \cite[Theorem 12.16]{LT}),
$$ \big| \int f d\mu_y - f(y)\big|  \leq c\int \sup_{ t \in K} \langle \nabla f(t), x \rangle d\gamma^n(x),$$
where $c$ is a positive constant depending on the diameter of the support of $\nu$, and $\gamma^n$ is the standard Gaussian measure on $\RR^n$.
\end{Rem}

We now turn our attention to the proof of Theorem \ref{NL}. As the ``rate function'' we are aiming for, is not a priori convex, we cannot restrict ourselves to estimate the logarithmic Laplace transform of $f(X)$, and use the previous Theorem \ref{partitionfunc}. We will actually refrain ourselves from using Chebytchev's inequality, and prefer to work directly on the probability of deviations, in contrast with the path followed by Chatterjee and Dembo in \cite[section 4, proof of Theorem 1.1]{CD}.
\begin{proof}[Proof of Theorem \ref{NL}]To ease the notation, we write $\Lambda$, $\Lambda^*$ as shorthands for $\Lambda_\mu$ and $\Lambda^*_\mu$ respectively. The key element of this upper bound is a deterministic result which translates the event where $f(X) \notin V_\delta (\{ I\leq r\})$ into a large deviation event for the process, 
$$ \big( \langle \theta \lambda, X\rangle -\Lambda(\theta \lambda)\big)_{\theta >0, \lambda \in W}.$$
The bound of Theorem \ref{NL} is then obtained by using a net-argument and a union bound to control the deviations of the above process.

Fix $\kappa\geq  r>0$. Denote by $ K =\{ \Lambda^* \leq \kappa \}$.
In a first step, we translate in a more geometric language the event where $X \in K $ and $f(X) \notin V_\delta(\{I\leq r\})$, which is the object of the following lemma.
\begin{lemma}\label{lemmdet}Let $x \in K$. If $f(x) \notin V_\delta(\{ I \leq r\})$, then
$$ \inf_{ z \in \delta W^{\circ}} \Lambda^*(x-z) \geq r,$$
where $W^{\circ}$ is the polar set of $W$, that is, denoting by $h_W(z) = \sup_{\lambda \in W}\langle \lambda, z \rangle$ for $z \in \RR^n$ the support function of $W$,
$$W^{\circ} = \big\{z \in \RR^n, \ h_W(z)\leq  1\big\}.$$
\end{lemma}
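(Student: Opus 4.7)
The plan is to argue by contrapositive. Assume there exists $z_0 \in \delta W^\circ$ with $\Lambda^*(x-z_0) < r$, and set $y_0 := x - z_0$; the goal is to show $f(x) \in V_\delta(\{I\leq r\})$. Since $\Lambda^*(y_0) < r \leq \kappa$, the point $y_0$ lies in $K$, so the mean-value assumption \eqref{meanvalueass} is available for the pair $(x,y_0)$ and yields
\[
f(x)-f(y_0) \ \leq\ h_W(z_0) \ \leq\ \delta,
\]
the last inequality being by definition of $z_0 \in \delta W^\circ$. Applying \eqref{meanvalueass} with the roles of $x$ and $y_0$ swapped gives $f(y_0)-f(x) \leq h_W(-z_0)$. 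To also bound this second quantity by $\delta$, I would note that \eqref{meanvalueass} is invariant under replacing $W$ by the symmetric convex body $\widetilde W := \mathrm{conv}(W \cup -W)$, so one may assume $W = -W$ from the outset; with this, $|f(x)-f(y_0)| \leq \delta$.

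Next, the definition of $I$ gives $I(f(y_0)) \leq \Lambda^*(y_0) < r$, so $f(y_0) \in \{I \leq r\}$ and thus $d(f(x), \{I\leq r\}) \leq \delta$. To upgrade this closed distance bound to the strict inequality required by the open neighborhood $V_\delta$, I would exploit the strictness in $\Lambda^*(y_0) < r$ together with the continuity of $\Lambda^*$ on $\mathrm{int}(\mathcal{D}_{\Lambda^*})$ (a consequence of the essential smoothness of $\Lambda$): rescaling $z_0 \mapsto (1-\eta) z_0$ for a small $\eta>0$ preserves $\Lambda^*(x-(1-\eta)z_0) < r$ by continuity, while tightening the support-function bound to $(1-\eta)\delta < \delta$. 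Replaying the previous step with this perturbed witness produces $|f(x)-f(y_0^\eta)| < \delta$, placing $f(x)$ strictly inside $V_\delta(\{I\leq r\})$.

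The step that requires the most care is the symmetrization. The mean-value assumption directly controls only $f(x) - f(y)$ by $h_W(x-y)$, so bounding $|f(x) - f(y_0)|$ on both sides of $f(x)$ forces $W$ to be effectively symmetric; without this observation the conclusion would only hold with respect to the polar of $\widetilde W$, which is strictly smaller than $W^\circ$ in general. The symmetrization remark above handles this cleanly, since $\widetilde W = \mathrm{conv}(W \cup -W)$ still satisfies \eqref{meanvalueass}. Once this is settled, the proof is a compact convex-analytic reformulation: $\delta W^\circ$ is precisely the set of shifts that preserve the value of $f$ up to $\delta$ on $K$, and any witness $z_0$ with $\Lambda^*(x-z_0)<r$ produces a point of $\{I\leq r\}$ lying within $\delta$ of $f(x)$.
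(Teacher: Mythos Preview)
Your contrapositive strategy and the rescaling argument to get the strict inequality for the open neighborhood $V_\delta$ are essentially the paper's approach. The genuine gap is your symmetrization step. You correctly note that \eqref{meanvalueass} still holds with $\widetilde W := \mathrm{conv}(W\cup -W)$ in place of $W$, but the \emph{conclusion} also changes: the polar becomes $\widetilde W^\circ = W^\circ \cap (-W)^\circ \subsetneq W^\circ$, so what your argument actually establishes is $\inf_{z\in \delta \widetilde W^\circ}\Lambda^*(x-z)\geq r$, which is strictly weaker than the stated lemma. You cannot ``assume $W=-W$ from the outset'' and still conclude anything about the original, larger set $W^\circ$; your own parenthetical remark about $\widetilde W^\circ$ being strictly smaller is precisely the obstruction, not something the symmetrization ``handles''.

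In fact the lemma as written is false for asymmetric $W$. Take $n=1$, $\mu$ standard Gaussian so $\Lambda^*(y)=y^2/2$, $f(x)=x$, and $W=[0,1]$; then \eqref{meanvalueass} holds since $x-y\leq (x-y)_+ = h_W(x-y)$, while $W^\circ=(-\infty,1]$, so $\delta W^\circ=(-\infty,\delta]$. For $x\in K$ with $x<-\sqrt{2r}-\delta$ (possible whenever $\kappa>r$ is large enough) one has $f(x)\notin V_\delta(\{I\leq r\})$, yet taking $z=x\in \delta W^\circ$ gives $\Lambda^*(x-z)=0$, so the infimum is $0$, not $\geq r$. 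The paper's own proof glosses over the same point: it asserts $|f(x)-f(y)|\leq h_W(x-y)$, which follows from \eqref{meanvalueass} only when $h_W(-\cdot)=h_W(\cdot)$, i.e.\ when $W$ is symmetric. This is harmless in the applications (the sets of low-rank bounded-norm matrices used later are symmetric), so both arguments go through there; but you should add the symmetry hypothesis rather than claim the symmetrization removes it.
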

Observe that since Theorem \ref{NL} is only relevant when $W$ is a small set in terms of complexity, its polar $\delta W^\circ$ can be thought as a large or typical set. The above lemma quantifies the fact that when $f(x) \notin V_\delta (\{ I\leq r\})$, $x$ is far away from $\delta W^\circ$ with respect to the cost function $\Lambda^*$.
\begin{proof}Let $x\in K$. By definition of $I$, for any $y\in \RR^n$ such that $\Lambda^*(y) \leq r$, we have $I(f(y)) \leq r$. Therefore, if $f(x) \notin V_{\delta}(\{I\leq r\})$, then
$$  \inf_{\Lambda^*(y) \leq r} |f(x) - f(y)| \geq \delta.$$
Since $\kappa \geq r$, we deduce from the assumption \eqref{meanvalueass} that  for any $y \in \RR^n$ such that $\Lambda^*(y) \leq r$,
$$ |f(x) - f(y)|\leq \sup_{\lambda \in W} \langle \lambda, x-y\rangle.$$
Therefore, we have 
\begin{equation} \label{ineqhw} \inf_{\Lambda^*(y) \leq r} \sup_{\lambda \in W} \langle \lambda, x-y \rangle\geq \delta.\end{equation}
Let $z \in \delta W^\circ$ and assume that $\Lambda^*(x-z) <r$. By continuity of $h_W$ and the continuity of  $\Lambda^*$ on its domain, we can assume $h_W(z)<\delta$. Using the above inequality \eqref{ineqhw} for $y =x -z$ we get that $h_W(x-y) = h_W(z) \geq \delta$, which yields a contradiction. Therefore,
$$ \inf_{ z \in \delta W^\circ} \Lambda^*(x-z) \geq r.$$
 \end{proof}
As one can observe, Lemma \ref{lemmdet} involves a convex optimization problem. A specific feature of such an optimization problem is that it admits a dual concave optimization problem, which is linked through its multipliers to the initial problem. Using this duality, we obtain the following lemma.

\begin{lemma}\label{boundoptimequiv}
 Let $x \in K$ and $\theta_0 = \kappa/\delta$. Then,
$$ \inf_{ z \in \delta W^{\circ}} \Lambda^*(x-z) \leq \sup_{ \underset{\lambda \in  W_{\theta}}{0 \leq \theta \leq \theta_0} } \{ \langle \theta \lambda , x\rangle - \Lambda(\theta\lambda)-\theta \delta\},$$
where for any $\theta>0$,
\begin{equation}\label{defWtheta}W_{\theta} = \big\{ \lambda \in W : \theta \lambda \in \mathrm{int}(\mathcal{D}_\Lambda), \  \nabla\Lambda(\theta \lambda)  \in K\big\}.\end{equation}
\end{lemma}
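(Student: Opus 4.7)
The statement asserts a convex duality identity for the program
$$p^{\star} := \inf_{z \in \delta W^{\circ}} \Lambda^{*}(x - z),$$
where $z \in \delta W^{\circ}$ is just the convex constraint $h_{W}(z) \leq \delta$. The plan is, in order: (i) apply Lagrangian/Fenchel duality with a non-negative multiplier $\theta$ for that constraint and identify the dual objective as $\langle \theta\lambda, x\rangle - \Lambda(\theta\lambda) - \theta\delta$; (ii) truncate $\theta$ to $[0, \theta_{0}]$; and (iii) show the dual optimum automatically satisfies $\lambda \in W_{\theta}$.

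For (i), write the Lagrangian $L(z, \theta) = \Lambda^{*}(x - z) + \theta h_{W}(z) - \theta\delta$. Slater's condition holds trivially since $z = 0$ satisfies $h_{W}(0) = 0 < \delta$, so strong duality yields $p^{\star} = \sup_{\theta \geq 0}\inf_{z} L(z, \theta)$. To identify the inner infimum, expand $h_{W}(z) = \sup_{\lambda \in W}\langle \lambda, z\rangle$ and swap $\inf_{z}$ with $\sup_{\lambda \in W}$ via Fenchel--Rockafellar duality (equivalently, Sion's minimax, using that $W$ is compact convex and the integrand is affine in $\lambda$ and convex in $z$). After the change of variable $w = x - z$, the inner infimum becomes the Fenchel conjugate of $\Lambda^{*}$ at $\theta\lambda$, which equals $\Lambda(\theta\lambda)$ by the identity $(\Lambda^{*})^{*} = \Lambda$ --- a consequence of the essential smoothness of $\Lambda_{\mu}$ invoked in the hypothesis of Theorem \ref{NL}. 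The upshot is
$$p^{\star} = \sup_{\theta \geq 0,\, \lambda \in W}\bigl\{\langle \theta\lambda, x\rangle - \Lambda(\theta\lambda) - \theta\delta\bigr\}.$$

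For (ii), by the very definition of $\Lambda^{*}$ and the assumption $x \in K$ one has $\langle \theta\lambda, x\rangle - \Lambda(\theta\lambda) \leq \Lambda^{*}(x) \leq \kappa$; hence for $\theta > \theta_{0} = \kappa/\delta$ the dual objective is strictly negative, whereas $\theta = 0$ yields $-\Lambda(0) = 0$. So the supremum is unchanged by truncating to $\theta \in [0, \theta_{0}]$. For (iii), upper semi-continuity of the dual objective on the now-compact set $[0, \theta_{0}] \times W$ guarantees the supremum is attained at some $(\theta^{\star}, \lambda^{\star})$. The primal minimizer $z^{\star}$ exists as well, since essential smoothness makes $\Lambda^{*}$ a good rate function with compact sublevel sets. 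KKT --- stationarity $\nabla \Lambda^{*}(x - z^{\star}) = \theta^{\star}\lambda^{\star}$ together with $\lambda^{\star} \in \partial h_{W}(z^{\star}) \subset W$ --- and the inversion $\nabla\Lambda^{*} = (\nabla\Lambda)^{-1}$ give $\nabla\Lambda(\theta^{\star}\lambda^{\star}) = x - z^{\star}$. This simultaneously places $\theta^{\star}\lambda^{\star}$ in $\mathrm{int}(\mathcal{D}_{\Lambda})$ (the only place $\nabla\Lambda$ is defined) and, since $\Lambda^{*}(x - z^{\star}) = p^{\star} \leq \Lambda^{*}(x) \leq \kappa$, places $\nabla\Lambda(\theta^{\star}\lambda^{\star})$ in $K$; thus $\lambda^{\star} \in W_{\theta^{\star}}$ and the bound follows.

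The main technical obstacle is ensuring that the Fenchel conjugate manipulations are tight and that the dual optimum does not escape to $\partial \mathcal{D}_{\Lambda}$. Both rely squarely on the essential smoothness hypothesis on $\Lambda_{\mu}$: it makes the Legendre transform an involution, turns $\nabla\Lambda$ and $\nabla\Lambda^{*}$ into mutual homeomorphisms between the interiors of the conjugate domains, and, through steepness, forbids subgradients at boundary points of $\mathcal{D}_{\Lambda^{*}}$ so that the KKT stationarity point is forced into the interior. These are exactly the ingredients packaged in the invocation of \cite[Theorem 26.5]{Rockafeller} at the start of the paper.
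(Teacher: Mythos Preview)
Your argument is correct and follows essentially the same Lagrangian/KKT route as the paper, just organized top-down via strong duality rather than bottom-up from the primal minimizer. Two minor remarks. First, your Slater witness $z=0$ tacitly requires $x\in\mathrm{int}(\mathcal{D}_{\Lambda^*})$, which $x\in K$ alone does not guarantee (for Bernoulli marginals, say, $K$ can touch $\partial\mathcal{D}_{\Lambda^*}$); replace it by $z_\varepsilon=\varepsilon(x-\bar m)$ for small $\varepsilon>0$, where $\bar m$ is the barycenter of $\mu$, so that $x-z_\varepsilon\in\mathrm{int}(\mathcal{D}_{\Lambda^*})$ and $h_W(z_\varepsilon)<\delta$. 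Second, where the paper devotes an explicit perturbation argument (its Fact~2.5) to proving $x-z^\star\in\mathrm{int}(\mathcal{D}_{\Lambda^*})$, your one-line appeal to steepness --- the essentially smooth $\Lambda^*$ has empty subdifferential on $\partial\mathcal{D}_{\Lambda^*}$, while the Moreau--Rockafellar sum rule applies since $h_W$ is everywhere continuous, forcing $0\in\partial F(z^\star)$ to fail on the boundary --- is a legitimate and arguably cleaner shortcut.
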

\begin{proof}
Since $\Lambda^*(x) \leq \kappa$ and $0\in \delta W^\circ$, we obtain by taking $z=0$, that 
\begin{equation} \label{finite} \inf_{z\in \delta W^{\circ}} \Lambda^*(x-z) \leq \kappa <+\infty.\end{equation}
As $\delta W^{\circ}$ is a closed set and $\Lambda^*$ has compact level sets by \cite[Lemma 2.2.31]{DZ}, the infimum is achieved at some $z_* \in \delta W^{\circ}$.

Since $\Lambda^*$ and $h_W$ are both convex functions, we deduce by Kuhn-Tucker Theorem (see \cite[Theorem 9.4]{Clarke}) that there exists $(\eta, \theta)\neq (0,0)$ with $\eta \in\{0,1\}$ and $\theta \geq 0$, such that $\theta(h_W(z_*) -\delta) = 0$, and
\begin{equation} \label{optimpb} \eta \Lambda^*(x-z_*) = \inf \big\{ \eta \Lambda^*(x-z) + \theta (h_W(z)-\delta) : z \in \RR^n \big\}.\end{equation}
 Evaluating the function on the right-hand side at $z=0$, we obtain that $\eta \Lambda^*(x-z_*) \leq -\theta \delta$. Therefore, the non-triviality condition $(\eta,\theta) \neq (0,0)$ implies that $\eta = 1$.
 By \eqref{finite}, we know that $x-z_* \in \mathcal{D}_\Lambda^*$. We claim that further:
\begin{fact}\label{interiorpoint}
 $x-z_*$ lies in the interior of $\mathcal{D}_{\Lambda^*}$. 
\end{fact}
 The intuition behind this fact is that the gradient of $\Lambda^*$ blows up around the frontier of its domain whereas the ``penalization function'' $h_W$ remains Lipschitz, so that it can only be beneficial for the minimum to lie in the interior of $\mathcal{D}_{\Lambda^*}$. We now make this idea rigorous and prove formally Fact \ref{interiorpoint}.
\begin{proof}
Note first that the barycenter of $\mu$, which we denote by $\overline{m}$, is in the interior of $\mathcal{D}_{\Lambda^*}$. Indeed, by Jensen's inequality, 
\begin{equation} \label{subdiff0} \forall \lambda \in \RR^n, \ \Lambda(\lambda) \geq \langle \lambda,\overline{m}\rangle.\end{equation}
Therefore, $\Lambda^*(\overline{m}) \leq 0$. Since $\Lambda^*\geq 0$, we deduce that $\Lambda^*(\overline{m})=0$ and in consequence $ 0 \in \partial \Lambda^*(\overline{m})$. As $\Lambda$ is strictly convex and essentially smooth, $\Lambda^*$ is also strictly convex and essentially smooth by \cite[Theorem 26.5]{Rockafeller}. In particular, $\Lambda^*$ is differentiable on $\mathrm{int}(\mathcal{D}_{\Lambda^*})$ and steep, so that any point with a nonempty subdifferential should lie in $\mathrm{int}(\mathcal{D}_{\Lambda^*})$. Therefore $\overline{m} \in \mathrm{int}(\mathcal{D}_{\Lambda^*})$.

To prove that $x-z_* \in \mathrm{int}(\mathcal{D}_\Lambda)$, we argue by contradiction. We will show that if $x-z_*$ lies on the frontier of $\mathcal{D}_{\Lambda^*}$, denoted by $\mathrm{fr}(\mathcal{D}_{\Lambda^*})$, then using the fact that $\Lambda^*$ is steep, we can find a barycenter of $\overline{m}$ and $x-z_*$ at which the cost function in the optimization problem \eqref{optimpb} is smaller than at $x-z_*$, thus violating the optimality of $x-z_*$. Assume now that $x-z_*\in \mathrm{fr}( \mathcal{D}_{\Lambda^*})$. Then, by \cite[Theorem 6.1]{Rockafeller}, we deduce that for any $t\in (0,1]$, $x-z_* - tu \in \mathrm{int}(\mathcal{D}_{\Lambda^*})$, where $ u = x-z_* -\overline{m}$, as we are taking a barycenter between an interior point and a frontier point of a convex set. Define
\begin{equation}\label{defF}\forall z \in \RR^n, \  F(z) = \Lambda^*(x-z) + \theta h_W(z).\end{equation}
For any $t>0$, there exists $\zeta_t \in  W$ such that $h_W(z_*) -h_W(z_*+tu) \geq -t \langle \zeta_t, u\rangle$. Using the convexity of $\Lambda^*$ we get, 
$$ F(z_*) \geq F(z_*+tu) + \langle  \nabla \Lambda^*(v_t) -\theta \zeta_t, t u\rangle,$$
where $v_t = x-z_*-tu$. Besides, $\Lambda^*$ is strictly convex and therefore achieves its unique minimum at $\overline{m}$ where $\Lambda^*(\overline{m})=0$. Thus for any $t<1$, $\Lambda^*(v_t)>0$. Since $v_t = \overline{m}+(1-t)u$, we deduce by convexity of $\Lambda^*$ that $\langle \nabla \Lambda^*(v_t) , u\rangle>0$ for any $t<1$. But $W$ is a bounded set, therefore $\zeta_t$ remains bounded as $t \in(0,1]$. Since $\Lambda^*$ is steep, we deduce that for $t$ close enough to $0$, 
$$ \langle \nabla \Lambda^*(v_t)  -\theta \zeta_t,  u\rangle>0,$$
which gives the desired contradiction.
\end{proof}

Since $z_*$ is a global minimum of the function $F$ defined in \eqref{defF}, we have $0 \in \partial F(z_*)$. 
From Fact \ref{interiorpoint}, we know that $x-z_*$ is an interior point of $\mathcal{D}_{\Lambda^*}$. As $\Lambda^*$ is differentiable on $\mathrm{int}(\mathcal{D}_{\Lambda^*})$, we have $0 \in \{- \nabla \Lambda^*(x-z_*)\} + \theta\partial h_W(z_*)$ by \cite[Theorem 4.10]{Clarke}.  By Danskin's formula (see \cite[Theorem 10.22]{Clarke}), the subdifferential of the support function $h_W$, is
$$\partial h_W(z) =\{ \lambda_* \in W: h_W(z) = \langle \lambda_*, z\rangle\}.$$
Thus, there is a $\lambda \in W$ satisfying $\delta = \langle\lambda , z_*\rangle$ such that,
\begin{equation} \label{crit} \nabla \Lambda^*(x-z_*) = \theta \lambda.\end{equation}
Therefore,
$$\Lambda^*(x-z_*) = \langle \theta \lambda, x-z_*\rangle - \Lambda (\theta\lambda) =  \langle \theta \lambda, x\rangle - \Lambda (\theta\lambda)-\theta \delta.$$
Note that $\theta$ and $\lambda$ depends on $x$ in an implicit way. However, we will show that $\theta$ and $\lambda$ can be restricted to certain subsets uniformly in $x$ such that $\Lambda^*(x) \leq \kappa$. More precisely, we will prove that $\theta \leq \kappa/\delta$ and $\lambda \in W_\theta$, with $W_\theta$ defined in \eqref{defWtheta}, thus ending the proof of  Lemma \ref{boundoptimequiv}.
We start with the bound on $\theta$. Using the convexity of $\Lambda^*$, we have,
$$\Lambda^*(x) -\Lambda^*(x-z_*) \geq \langle \nabla \Lambda^*(x-z_*), z_*\rangle.$$ Using that  $\langle \lambda, z_* \rangle = \delta$ and that $\Lambda^*$ is non-negative, we get $\Lambda^*(x) \geq \theta \delta$. Since $\Lambda^*(x)\leq \kappa$, we obtain that $\theta \leq \kappa/\delta$. To prove that $\lambda \in W_\theta$, we use the fact that $\nabla \Lambda$ and $\nabla \Lambda^*$ are inverse maps and \eqref{finite} to deduce that 
 $$\Lambda^*( \nabla \Lambda(\theta \lambda)) =  \Lambda^*(x-z_*)\leq \Lambda^*(x)\leq \kappa,$$
which finally ends the proof.

\end{proof}
Combining Lemmas \ref{lemmdet} and \ref{boundoptimequiv}, we obtain that if $x \in K$ and $f(x) \notin V_\delta(\{I\leq r\})$, then
\begin{equation} \label{implication} \sup_{ \underset{\lambda \in  W_{\theta}}{0 \leq \theta \leq \theta_0} } \{ \langle \theta \lambda , x\rangle - \Lambda(\theta\lambda)-\theta \delta\}\geq r,\end{equation}
where $W_\theta$ is defined in \eqref{defWtheta} for any $\theta>0$. In order to be allowed to take the probability of this event, we check the measurability of the supremum appearing in the above inequality.
\begin{fact}
The function $x \in \RR^n \mapsto  \sup_{ \underset{\lambda \in  W_{\theta}}{0 \leq \theta \leq \theta_0} } \{ \langle \theta \lambda , x\rangle - \Lambda(\theta\lambda)-\theta \delta\}$ is measurable.
\end{fact}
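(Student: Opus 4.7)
The plan is to exhibit the function as lower semicontinuous, from which Borel measurability follows automatically. For each fixed pair $(\theta,\lambda)$ with $0 \leq \theta \leq \theta_0$ and $\lambda \in W_\theta$, the map
\[
g_{\theta,\lambda}(x) := \langle \theta\lambda, x\rangle - \Lambda(\theta\lambda) - \theta\delta
\]
is affine in $x$, since $\Lambda(\theta\lambda)$ and $\theta\delta$ are constants (independent of $x$). In particular each $g_{\theta,\lambda}$ is continuous on $\RR^n$.

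The function under consideration is the pointwise supremum of the family $\{g_{\theta,\lambda}\}$ indexed by the set $\{(\theta,\lambda) : 0 \leq \theta \leq \theta_0, \ \lambda \in W_\theta\}$. I would then invoke the standard fact that the pointwise supremum of any family of continuous (more generally, lower semicontinuous) functions is lower semicontinuous. Indeed, for each $t \in \RR$, the sub-level set $\{x : \sup_{\theta,\lambda} g_{\theta,\lambda}(x) \leq t\} = \bigcap_{\theta,\lambda} \{x : g_{\theta,\lambda}(x) \leq t\}$ is an intersection of closed half-spaces, hence closed. Every lower semicontinuous function on $\RR^n$ is Borel measurable, which concludes the argument.

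The only potential subtlety is whether the index set of the supremum is nonempty, but this is immediate: taking $\theta = 0$ gives $\theta\lambda = 0 \in \mathrm{int}(\mathcal{D}_\Lambda)$ (since $\Lambda$ is essentially smooth with the barycenter $\overline{m} = \nabla\Lambda(0)$ satisfying $\Lambda^*(\overline{m}) = 0 \leq \kappa$, as noted in the proof of Fact \ref{interiorpoint}), so $W_0 = W$ is nonempty. Hence there is no real obstacle; the statement reduces to a one-line application of the principle that suprema of continuous functions are lower semicontinuous, and no measurability concerns beyond this arise.
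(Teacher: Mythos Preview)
Your argument is correct and in fact more direct than the paper's own proof. You observe that each $g_{\theta,\lambda}$ is affine in $x$, so the pointwise supremum over \emph{any} index set is lower semicontinuous (even convex), hence Borel measurable; no structural information about the index set is needed.

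The paper instead argues via the index set: it shows that $A = \bigcup_{\theta \leq \theta_0} \{\theta\} \times W_\theta$ is bounded, hence precompact, hence separable, and then uses continuity of $(\theta,\lambda) \mapsto \langle \theta\lambda,x\rangle - \Lambda(\theta\lambda) - \theta\delta$ on $A$ to reduce the supremum to one over a countable dense subset of $A$, yielding measurability as a countable supremum of continuous functions. This route is standard for suprema of processes where the dependence on $x$ need not be continuous, but here it is unnecessary overhead: since the functions are affine in $x$, lower semicontinuity is immediate regardless of the size or structure of the index set. Your approach is both shorter and conceptually cleaner; the paper's approach would be the right template if the functions $g_{\theta,\lambda}$ were merely measurable in $x$ rather than continuous. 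One minor remark: the fact that $0 \in \mathrm{int}(\mathcal{D}_\Lambda)$ is actually established in the proof of Fact~\ref{monotonicity} rather than Fact~\ref{interiorpoint}, but this does not affect the validity of your argument (and, as you note, nonemptiness of the index set is not even strictly needed for measurability).
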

\begin{proof}
Since $(\theta,\lambda) \mapsto  \langle \theta \lambda , x\rangle - \Lambda(\theta\lambda)-\theta \delta$ is a continuous function, it suffices to prove that the subset 
$$ A : =\bigcup_{\theta \leq \theta_0} \{\theta\}\times W_\theta,$$
is separable in the sense that it contains a dense subset. Observe that $A$ is a bounded set since $W_\theta \subset W$ for any $\theta \geq0$ and $W$ is bounded by assumption. Therefore its closure, $\mathrm{cl}(A)$, is compact. This entails that $A$ is pre-compact, and in consequence that it is separable.

\end{proof}
Define now the measure, 
$$\PP_{K} = \PP( . \cap \{X \in K\}).$$
From \eqref{implication}, we have the upper bound,
\begin{equation} \label{unionprete} \PP_K\big( f(X) \notin V_{\delta}(\{I\leq r\}) \big)\leq \PP_K\big( \sup_{ \underset{\lambda \in  W_{\theta}}{0 \leq \theta \leq \theta_0}}  \{ \langle \theta \lambda , X\rangle - \Lambda(\theta\lambda) - \theta \delta\} \geq r\big).\end{equation}

In a last step, we control the deviations of the supremum of the process appearing in the right-hand side of \eqref{unionprete} using a net argument and performing a union bound. 
\begin{lemma}\label{netargument}  Let $D$ denote the diameter of  $K$. Let $\mathcal{D}_{\delta/2}$ be a $\delta/2D$-net for the $\ell^2$-norm of $W$. Then, 
$$\log  \PP_K\big( \sup_{ \underset{\lambda \in  W_{\theta}}{0 \leq \theta \leq \theta_0}}  \{ \langle \theta \lambda , X\rangle  - \Lambda(\theta\lambda) - \theta \delta\} \geq r\big) \leq -r +\log |\mathcal{D}_{\delta/2}| + \log\Big( \Big(\frac{\kappa LD}{\delta}\Big)\vee 1\Big)+1,$$
where $\theta_0 = \kappa/\delta$, $ L = \sup_{ \lambda \in W} ||\lambda ||_{\ell^2}$ and for any $\theta\geq 0$,
$$W_{\theta} = \big\{ \lambda \in W : \theta \lambda \in \mathrm{int}(\mathcal{D}_\Lambda), \  \nabla\Lambda(\theta \lambda)  \in K\big\}.$$
\end{lemma}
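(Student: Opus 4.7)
The plan is to reduce the continuous supremum over $(\theta,\lambda)\in[0,\theta_0]\times W_\theta$ to a finite union bound on a two-dimensional net. The single-point ingredient is Chernoff: for any $\lambda\in W_\theta$ the identity $\EE\exp(\langle\theta\lambda,X\rangle-\Lambda(\theta\lambda))=1$ follows from the very definition of $\Lambda$, so Markov's inequality yields
$$
\PP\bigl(\langle\theta\lambda,X\rangle-\Lambda(\theta\lambda)-\theta\delta\geq r\bigr)\leq e^{-r-\theta\delta}.
$$
The goal is to glue these pointwise bounds into a supremum bound by discretizing both $\theta$ and $\lambda$ and controlling the discretization errors by convexity inequalities for $\Lambda$.

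For the $\lambda$-coordinate I would use the given $\delta/(2D)$-net $\mathcal{D}_{\delta/2}$ of $W$; for the $\theta$-coordinate, a uniform grid $\theta_k=k\Delta$ on $[0,\theta_0]$ with spacing $\Delta=1/(LD)$ when $LD\geq 1$ (and otherwise the single partition $\{0,\theta_0\}$, which explains the $\vee 1$ in the statement), producing $N+1\leq(\kappa LD/\delta)\vee 1$ grid points. To bridge an admissible pair $(\theta,\lambda)$ to its nearest net point $(\theta_k,\lambda')$, I would use the first-order inequality for the convex function $\Lambda$ at the interior point $\theta\lambda$: since $\nabla\Lambda(\theta\lambda)\in K$ by definition of $W_\theta$ and $X\in K$, the vector $X-\nabla\Lambda(\theta\lambda)$ has $\ell^2$-norm at most $D$, whence
$$
\langle\theta\lambda,X\rangle-\Lambda(\theta\lambda)\leq\langle\theta\lambda',X\rangle-\Lambda(\theta\lambda')+\theta D\|\lambda-\lambda'\|_{\ell^2}\leq\langle\theta\lambda',X\rangle-\Lambda(\theta\lambda')+\theta\delta/2,
$$
so that the added $\theta\delta/2$ term is absorbed by half of the $\theta\delta$ penalty. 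An analogous concavity argument in the $\theta$-direction applied to $s\mapsto\langle s\lambda',X\rangle-\Lambda(s\lambda')$, whose derivative is bounded by $LD$ at admissible points, absorbs $|\theta-\theta_k|\leq\Delta$ into an additive $LD\Delta=1$. Combining these two swaps and routing the $\theta\delta$ through $\theta_k\delta/2$ gives the clean deterministic reduction
$$
\langle\theta\lambda,X\rangle-\Lambda(\theta\lambda)-\theta\delta\leq\langle\theta_k\lambda',X\rangle-\Lambda(\theta_k\lambda')-\theta_k\delta/2+1.
$$

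With this reduction in hand, I would apply the one-point Chernoff bound at each $(\theta_k,\lambda')$ with threshold $r+\theta_k\delta/2-1$ and union-bound:
$$
\PP_K(\text{event})\leq e\cdot|\mathcal{D}_{\delta/2}|\sum_{k=0}^{N}e^{-r-\theta_k\delta/2}\leq e\cdot|\mathcal{D}_{\delta/2}|\cdot(N+1)\cdot e^{-r},
$$
where the geometric sum is estimated crudely by $N+1$. Taking logs and substituting $N+1\leq(\kappa LD/\delta)\vee 1$ reproduces exactly the announced bound.

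The main technical obstacle I foresee is in the $\theta$-swap step: the $LD$-bound on the derivative of $s\mapsto\langle s\lambda',X\rangle-\Lambda(s\lambda')$ requires $\nabla\Lambda(\theta_k\lambda')\in K$, i.e.\ $\lambda'\in W_{\theta_k}$, which an $\ell^2$-net of $W$ does not guarantee. I would circumvent this by performing the $\lambda$-swap \emph{first}, so that the subdifferential of $\Lambda$ is invoked only at the admissible endpoint $\theta\lambda$ where $\nabla\Lambda(\theta\lambda)\in K$ is free, and by discarding net pairs $(\theta_k,\lambda')$ for which $\Lambda(\theta_k\lambda')=+\infty$ since they contribute $-\infty$ to the supremum. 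A unified variant would merge both swaps into a single Lipschitz estimate for the concave map $\eta\mapsto\langle\eta,X\rangle-\Lambda(\eta)$ at $\eta=\theta\lambda$, where its gradient $X-\nabla\Lambda(\theta\lambda)$ is controlled by $D$ thanks to $\lambda\in W_\theta$; the separation into $L|\theta-\theta_k|$ and $\theta_k\|\lambda-\lambda'\|$ then has to be carried out so that the latter piece recombines with the slack $\theta\delta/2$ rather than being estimated on its own.
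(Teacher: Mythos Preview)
Your overall plan---discretize in $(\theta,\lambda)$, compare to net points, union-bound, then Chernoff---is exactly the paper's strategy, and you have correctly located the one genuine difficulty: the comparison step needs $\nabla\Lambda$ controlled at the \emph{net} point, not at the original point. Unfortunately your proposed fix does not work, because the first-order inequality goes the wrong way.

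Concretely, write $f(\eta)=\langle\eta,X\rangle-\Lambda(\eta)$, which is concave. Concavity gives, for any $\eta,\eta'$ with $\eta'\in\mathrm{int}(\mathcal{D}_\Lambda)$,
\[
f(\eta)-f(\eta')\ \leq\ \langle \nabla f(\eta'),\,\eta-\eta'\rangle\ =\ \langle X-\nabla\Lambda(\eta'),\,\eta-\eta'\rangle.
\]
To bound $f(\theta\lambda)$ from above by $f(\theta\lambda')+\text{error}$ you must take $\eta=\theta\lambda$ and $\eta'=\theta\lambda'$, so the gradient appears at the \emph{net point} $\theta\lambda'$; controlling $\|X-\nabla\Lambda(\theta\lambda')\|\leq D$ requires precisely $\lambda'\in W_\theta$. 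If instead you linearize at $\theta\lambda$ (where admissibility is free), what you obtain is the reverse inequality $f(\theta\lambda')\leq f(\theta\lambda)+\theta D\|\lambda'-\lambda\|$, which is useless here. So the displayed bound in your third paragraph is not a consequence of the first-order inequality at $\theta\lambda$; it is in the wrong direction.

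The paper resolves this not by reordering the swaps but by changing the net: for each fixed $\theta$ it takes a $\delta/D$-net $\mathcal{F}$ of $W_\theta$ with $\mathcal{F}\subset W_\theta$ (so the gradient at net points is controlled for free), and then invokes the elementary packing/covering comparison $\overline{N}(A,rB_{\ell^2})\leq N(B,(r/2)B_{\ell^2})$ for $A\subset B$ to bound $|\mathcal{F}|$ by $|\mathcal{D}_{\delta/2}|$. This is exactly why the hypothesis asks for a $\delta/(2D)$-net of $W$ rather than a $\delta/D$-net. For the $\theta$-direction the paper uses an additional ingredient you did not mention: the family $(W_\theta)_{\theta\geq0}$ is \emph{non-increasing} in $\theta$ (because $\theta'\mapsto\Lambda^*(\nabla\Lambda(\theta'\lambda))$ is non-decreasing), so one approximates each $\theta$ by a grid point $\theta'\leq\theta$, guaranteeing $\lambda\in W_\theta\subset W_{\theta'}$ and hence $\nabla\Lambda(\theta'\lambda)\in K$. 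Once you incorporate these two points---an inner net $\mathcal{F}\subset W_\theta$ together with the packing/covering comparison, and the monotonicity of $W_\theta$---your outline becomes a correct proof essentially identical to the paper's.
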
 
\begin{proof}
 We start by a net argument on $\theta$. 
 For $x\in K$ fixed, define the function 
$$G : \theta \in \RR_+  \mapsto \sup_{\lambda \in W_{\theta}} \{ \langle \theta\lambda, x\rangle - \Lambda(\theta \lambda) - \theta \delta\}.$$
We claim that for any $\theta' \leq \theta$,
\begin{equation} \label{liponeside} G(\theta') - G(\theta) \leq (\theta -\theta')LD.\end{equation}
We observe first the following fact on the monotonicity of the sets $W_\theta$.
\begin{fact}\label{monotonicity}
The family $(W_{\theta})_{\theta \geq 0}$ is non-increasing for the inclusion.
\end{fact}
\begin{proof}
 Let $ \theta\geq 0$ and $\lambda \in W_\theta$. Since $K= \{ \Lambda^* \leq \kappa\}$, it means that $\theta \lambda\in\mathrm{int}(\mathcal{D}_{\Lambda^*})$ and $\Lambda^*(\nabla \Lambda(\theta \lambda ))\leq \kappa$. We know by \eqref{subdiff0} that $\overline{m} \in  \partial \Lambda(0)$. Since $\Lambda$ is essentially smooth and $\Lambda$ has a non-empty subdifferential at $0$, the point $0$ must lie in the interior of $\mathcal{D}_\Lambda$. Since by \cite[Theorem 6.2]{Rockafeller}, $\mathrm{int}(\mathcal{D}_\Lambda)$ is a convex set, we deduce that for any $0\leq \theta '\leq \theta $, $\theta' \lambda \in \mathrm{int}(\mathcal{D}_\Lambda)$.  In addition, for any $\theta' \leq \theta$ we can compute
\begin{align*}
\frac{\partial}{\partial \theta'}\Lambda^*(\nabla \Lambda(\theta' \lambda)) &= \langle \nabla \Lambda^*(\nabla \Lambda(\theta' \lambda)), \nabla^2\Lambda(\theta' \lambda). \lambda \rangle \\
& = \theta' \langle \lambda , \nabla^2\Lambda(\theta' \lambda). \lambda \rangle \geq 0,
\end{align*}
 where we used the fact that $\nabla \Lambda$ and  $\nabla \Lambda^*$ are inverse maps from one another. Thus, we can conclude that for any $\theta'\leq \theta$, $\Lambda^*(\theta' \lambda)\leq \Lambda^*(\theta \lambda)\leq \kappa $, and therefore $\lambda \in W_{\theta'}$.
\end{proof}
We now prove \eqref{liponeside}. Let $\theta' \leq \theta$. By convexity of $\Lambda$, we get for any $\lambda \in W_{\theta}$,
$$
\big( \langle \theta \lambda, X\rangle   -\Lambda(\theta' \lambda)\big) - \big ( \langle \theta' \lambda, X\rangle   -\Lambda(\theta' \lambda)\big)  \leq (\theta-\theta')\langle \lambda, \nabla \Lambda(\theta' \lambda) -X\rangle.$$
Since  $ W_{\theta}\subset W_{\theta'}$ by Fact \ref{monotonicity}, we have  $\nabla \Lambda (\theta'\lambda) \in K$. As we denoted by $D$ the diameter of $K$, we obtain
$$
\big( \langle \theta \lambda, X\rangle   -\Lambda(\theta' \lambda)\big) - \big ( \langle \theta' \lambda, X\rangle   -\Lambda(\theta' \lambda)\big)  \leq (\theta -\theta') L D,
$$
which holds for any $\theta'\leq \theta$ and $\lambda \in W_\theta\subset W_{\theta'}$. This yields the claim \eqref{liponeside}. 

Let $\mathcal{E}$ be a $1/(LD)$-net of the interval $[0,\theta_0]$.
 One can find a net $\mathcal{E}$ such that,
\begin{equation}\label{carditheta}|\mathcal{E}| \leq  \big(LD \theta_0\big)\vee 1 = \Big( \frac{\kappa LD }{\delta}\Big) \vee 1.\end{equation}
For any $\theta \in [0,\theta_0]$, one can find $\theta' \in \mathcal{E}$, such that $0 \leq \theta-\theta'\leq 1/LD$. From \eqref{liponeside}, we deduce that 
$$ \sup_{\theta \in [0,\theta_0]} G(\theta) \leq \sup_{\theta ' \in \mathcal{E}} G(\theta') + 1.$$
Thus, using a union bound, we get,
\begin{align}
 \PP_K& \big( \sup_{ \underset{\lambda \in  W_{\theta}}{0 \leq \theta \leq \theta_0}} \{ \langle \theta \lambda , X\rangle - \Lambda(\theta\lambda) - \theta \delta\} \geq r\big) \nonumber  \\
&\leq \sum_{\theta \in \mathcal{E}} \PP_K\big( \sup_{ \lambda \in  W_{\theta}}  \{ \langle \theta \lambda , X\rangle - \Lambda(\theta\lambda) - \theta \delta\} \geq r-1\big).\label{unionboundtheta}\end{align}
Let us now fix $\theta  \in \mathcal{E}$ and perform a net argument on $\lambda \in W_\theta$. Fix $x\in K$ and define the function $H$ taking values in $\RR \cup \{-\infty\}$,
$$ H : \lambda \in \RR^n \mapsto \langle \theta \lambda, X\rangle - \Lambda(\theta\lambda).$$
We  claim that for any $\lambda,\lambda'\in W_\theta$,
\begin{equation}\label{lipH} H(\lambda) - H(\lambda')\leq \theta D ||\lambda-\lambda'||.\end{equation}
Indeed, by convexity of $\Lambda$, we have for $\lambda,\lambda'\in W_\theta$,
$$ H(\lambda) - H(\lambda')\leq \theta \langle \lambda-\lambda',x-\nabla \Lambda(\theta \lambda')\rangle.$$
Since $\lambda'\in W_\theta$, we have $\nabla \Lambda(\theta \lambda') \in K$, which yields \eqref{lipH}.

Let $\mathcal{F}$ be a $\delta/D$-net for the $\ell^2$-norm of $W_\theta$ such that $\mathcal{F}\subset W_\theta$. Using \eqref{lipH} we obtain that
$$ \sup_{\lambda \in W_\theta} H(\lambda) \leq \sup_{\lambda'\in \mathcal{F}} H(\lambda') + \theta \delta.$$
Therefore, 
$$\PP_K\big( \sup_{\lambda \in W_{\theta}} \{ \langle\theta  \lambda, X\rangle - \Lambda(\theta\lambda) \} \geq r-1 +\theta \delta\big) \leq \PP_K\big( \sup_{\lambda \in \mathcal{F} } \{ \langle\theta \lambda, X\rangle - \Lambda(\theta\lambda) \} \geq r-1 \big).$$
Performing a union bound we get,
$$ \PP\big( \sup_{\lambda \in \mathcal{F}} \{ \langle \theta \lambda, X\rangle - \Lambda(\theta\lambda) \} \geq r-1 \big) \leq |\mathcal{F}| e^{-r+1},$$
where we used the fact that for any $\xi \in \RR^n$, and $t \geq 0$,
$$\PP(\langle \xi, X\rangle - \Lambda(\xi) \geq t )\leq e^{-t},$$
by Chernoff's inequality. Therefore,
\begin{equation}\label{nettheta}\PP_K\big( \sup_{\lambda \in W_{\theta}} \{ \langle\theta  \lambda, X\rangle - \Lambda(\theta\lambda) \} \geq r-1 +\theta \delta\big) \leq  |\mathcal{F}| e^{-r+1}.\end{equation}
 It remains now to show that we can find a $\delta/D$-net $\mathcal{F}$ of $W_\theta$ such that $\mathcal{F} \subset W_\theta$ and relate its cardinal to the one of a net of $W$.  For any $A\subset \RR^n$ and $r>0$ we denote by $N(A,rB_{\ell^2})$ and $\overline{N}(A,rB_{\ell^2})$ the following covering numbers
$$ N(A,rB_{\ell^2}) = \min \big\{ N \in\NN : \exists x_1,\ldots,x_N \in \RR^n, A \subset \bigcup_{i=1}^N B_{\ell^2}(x_i, r) \big\},$$
$$ \overline{N}(A,rB_{\ell^2}) = \min \big\{ N\in \NN : \exists x_1,\ldots,x_N \in A, A \subset \bigcup_{i=1}^N B_{\ell^2}(x_i, r) \big\}.$$
While it is true that the first notion of covering number $N(A,rB_{\ell^2})$ is non-decreasing in $A$ for the inclusion, this fact becomes wrong for $\overline{N}(A,rB_{\ell^2})$. When $A$ is convex, it is known (see \cite[Fact 4.1.4]{AGM}) that both covering numbers defined above coincide. Unfortunately, we cannot prove in general that $W_\theta$ is convex. But, up to loose a factor $2$ in the mesh of the net, we have the following fact:
\begin{fact}\label{factcovering} Let $A\subset B \subset \RR^n$ and $r>0$. Then, $  \overline{N}(A,rB_{\ell^2}) \leq N(B,(r/2)B_{\ell^2})$.
\end{fact}
\begin{proof}
For any $A\subset \RR^n$ and $r>0$, let  $M(A,rB_{\ell^2})$ be the separation number defined as 
$$ M(A,rB_{\ell^2}) = \max\big\{ N \in \NN : \exists x_1,\ldots,x_N \in A, \ \forall i\neq j, ||x_i-x_j||\geq r\big\}.$$
Let $A\subset B\subset \RR^n$. We know by \cite[Fact 4.1.11]{AGM} that $ \overline{N}(A,rB_{\ell^2}) \leq M(A,rB_{\ell^2})$. Since $A\subset B$,
$$ M(A,rB_{\ell^2}) \leq M(B,rB_{\ell^2}).$$
Using again \cite[Fact 4.1.11]{AGM}, we have
$$ M(B,rB_{\ell^2}) \leq N(B,(r/2)B_{\ell^2}),$$
which ends the proof.
\end{proof}
Let $\mathcal{D}_{\delta/2}$ be a $\delta/2D$-net for the $\ell^2$-norm of $W$. As $W_\theta \subset W$, we deduce from Fact \ref{factcovering} that 
 there exists a $\delta/D$-net $\mathcal{F}$ of $W_\theta$ for the $\ell^2$-norm such that $\mathcal{F} \subset W_\theta$ and $|\mathcal{F}|\leq |\mathcal{D}_{\delta/2}|$. From the two union bounds \eqref{unionboundtheta} and \eqref{nettheta}, we obtain
$$ \log \PP_K \big( \sup_{ \underset{\lambda \in  W_{\theta}}{0 \leq \theta \leq \theta_0}} \{ \langle \theta \lambda , X\rangle - \Lambda(\theta\lambda) - \theta \delta\} \geq r\big)\leq \log |\mathcal{E}| + \log |\mathcal{F}| -r+1.$$
Since $|\mathcal{E}|\leq (\kappa LD/\delta)\vee 1$ by \eqref{carditheta} and $|\mathcal{F} |\leq |\mathcal{D}_{\delta/2}|$, we deduce that
\begin{align*} \log \PP_K \big( \sup_{0 \leq \theta \leq \theta_0, \lambda \in  W_{\theta}} \{ \langle \theta \lambda , X\rangle &- \Lambda(\theta\lambda) - \theta \delta\} \geq r\big)\\
&\leq -r+\log |\mathcal{D}_{\delta/2}| + \log \Big(\Big(\frac{\kappa LD}{\delta}\Big)\vee 1\Big) + 1,\end{align*}
whih ends the proof.
\end{proof}
To finalize the proof, we choose $\kappa \geq r$ such that $\PP( X \notin K) \leq e^{-r}$. Using \eqref{unionprete}  and Lemma \ref{netargument}, we obtain
 \begin{align*}\PP\big( f(X) \notin V_{\delta}(\{I\leq r\}) \big)&\leq  \PP_K\big( f(X) \notin V_{\delta}(\{I\leq r\}) \big) +e^{-r}\\
& \leq \frac{\kappa LD}{\delta}|\mathcal{D}_{\delta/2}| e^{-r+1}+e^{-r}\\
& \leq \frac{2\kappa LD}{\delta}|\mathcal{D}_{\delta/2}| e^{-r+1},
\end{align*}
which gives us the claim.

\end{proof}

 \section{Large deviation of traces of Wigner matrices}
In this section, we will give a proof of Theorem \ref{LDPtrb} and  Corollary \ref{sharpsubGtr}.

\subsection{Large deviation upper bound}

We start with the large deviation upper bound of Theorem \ref{LDPtrb}. The  strategy for this upper bound will be similar to the one we adopted to investigate the large deviations of the moments of $\beta$-ensembles in \cite[section 3]{LDPtr}. It consists in truncating the spectrum so as to reduce ourselves to study the deviations of a small fraction of the eigenvalues. Once this truncation made, we will be able to use efficiently Theorem \ref{NL}.

We introduce some notation we will use throughout this section. Let $Y\in \mathcal{H}_n$.  For any $k \in \{1,\ldots ,n\}$, we denote by $\tr_{[k]} Y$ the truncated trace:
\begin{equation} \label{trunctrdef} \tr_{[k]} Y = \sum_{i=1}^k \lambda_i(Y),\end{equation}
where $\lambda_1(Y),\ldots,\lambda_n(Y)$ are the eigenvalues of $Y$ in non-increasing order. For any measurable function $f : \RR \to \RR$, we define $f(Y)$ by functional calculus as the Hermitan matrix,
$$ f(Y) = \sum_{i=1}^n f(\lambda_i(Y)) u_iu_i^*,$$
where $u_1,\ldots,u_n$ are the eigenvectors of $Y$ associated to $\lambda_1(Y),\ldots, \lambda_n(Y)$.
We now define, for $d$ even,
\begin{equation}\label{trunctr} f_k(Y) =  \frac{1}{n}\tr_{[k]} (Y/\sqrt{n})^d,\end{equation}
whereas for $d$ odd,
$$ f_k(Y) = \frac{1}{n}\tr_{[k]} (Y_+/\sqrt{n})^d - \frac{1}{n}\tr_{[k]} (Y_-/\sqrt{n})^d.$$
The first step toward the proof of the upper bound is the following lemma, which will rely on concentration arguments.
\begin{Lem}\label{equivexpo}Assume $X$ is a Wigner matrix satisfying the convex concentration property.
Let $k\in \{1,\ldots,n\}$ such that $n^{\frac{1}{d-1}} = o(k)$ and $k=o(n)$. For any $t>0$,
$$\lim_{n\to +\infty} \frac{1}{n^{1+\frac{2}{d}} } \log \PP \Big( \big|\frac{1}{n}\tr (X/\sqrt{n})^d - f_{k}(X)- \mu_{sc}(x^d) \big| >t \Big) = - \infty.$$

\end{Lem}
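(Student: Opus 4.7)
The strategy is to focus on the case of $d$ even (the odd case follows analogously by splitting $X$ into positive and negative parts and treating each separately). For $d$ even, write $R_n(X) := \frac{1}{n}\tr(X/\sqrt n)^d - f_k(X) = \frac{1}{n}\sum_{i > k}\lambda_{(i)}(X/\sqrt n)^d$, where $(\lambda_{(i)})$ are the eigenvalues of $X/\sqrt n$ sorted in decreasing order of absolute value. The plan is to approximate $R_n$ by a convex Lipschitz function of $X$ to which the convex concentration property of Definition~\ref{concconv} applies.

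Fix $M > 2$ and let $\Phi_M\colon \RR \to \RR$ be the convex Lipschitz extension of $x \mapsto x^d$ by continuation along the tangent lines at $\pm M$. Set $h_M(X) := \frac{1}{n}\tr\Phi_M(X/\sqrt n)$. Since $\Phi_M$ is scalar convex, $h_M$ is convex on $\mathcal{H}_n$, and via Cauchy--Schwarz combined with Hoffman--Wielandt one verifies that $h_M$ is $dM^{d-1}/n$-Lipschitz with respect to the Hilbert--Schmidt norm. The convex concentration property then yields
$$\PP\bigl(|h_M(X)-\EE h_M(X)|>s\bigr) \leq c^{-1}\exp\Bigl(-\frac{cn^2 s^2}{M^{2(d-1)}}\Bigr),$$
super-exponentially small at speed $n^{1+2/d}$ for fixed $s, M$ and $d \geq 3$. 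A moment computation exploiting the hypothesis that the derivatives of $\Lambda_{1,1}, \Lambda_{1,2}$ up to order $\ell > d$ are uniformly bounded gives $\EE h_M(X) \to \mu_{sc}(x^d)$. On the event $\mathcal A_M := \{|\lambda_{(k+1)}(X/\sqrt n)| \leq M\}$ one has $\Phi_M(\lambda_{(i)}) = \lambda_{(i)}^d$ for every $i > k$, so
$$h_M(X)-R_n(X) = \frac{1}{n}\sum_{i\leq k}\Phi_M(\lambda_{(i)}(X/\sqrt n)) \leq \frac{kM^d}{n} + \frac{dM^{d-1}\sqrt k}{n}\|X/\sqrt n\|_{HS},$$
the last inequality by Cauchy--Schwarz applied to the sum of the top $k$ absolute values of the eigenvalues. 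Since $\|X/\sqrt n\|_{HS}$ is convex and $1/\sqrt n$-Lipschitz, the convex concentration property gives $\|X/\sqrt n\|_{HS} \leq \sqrt n + r(n)\, n^{1/d}$ with probability $1 - \exp(-\omega(n^{1+2/d}))$ for any $r(n) \to \infty$; taking $r(n)$ growing slowly (e.g.\ $\log n$) makes $h_M - R_n = o(1)$ on this event whenever $k = o(n)$.

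It remains to control $\PP(\mathcal A_M^c) = \PP(|\lambda_{(k+1)}(X/\sqrt n)| > M)$. Here the key inequality is Markov
$$\PP\bigl(|\lambda_{(k+1)}(X/\sqrt n)|>M\bigr) \leq \frac{\EE\tr(X/\sqrt n)^{2p}}{kM^{2p}},$$
combined with high-moment bounds on $\tr(X/\sqrt n)^{2p}$ deriving from the sub-Gaussian tails of the entries (implied by the convex concentration property together with the boundedness of the derivatives of the $\Lambda_{i,j}$ up to order $\ell>d$). Choosing $p$ of order $n^{1+2/d}$, the assumption $n^{1/(d-1)} = o(k)$ is exactly the threshold at which the resulting bound is super-exponentially small at speed $n^{1+2/d}$. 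A union bound then finishes the proof. The main obstacle is this last step: the moment bound on $\tr(X/\sqrt n)^{2p}$ at $p \asymp n^{1+2/d}$ is beyond the standard range $p \ll n^{1/6}$ of the Wigner combinatorial/moment method, and requires refined estimates tailored to the sub-Gaussian setting.
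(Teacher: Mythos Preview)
Your decomposition is sound through the concentration of $h_M$ and the comparison $|h_M-R_n|=o(1)$ on $\mathcal A_M$, but the final step is a genuine gap. Controlling $\PP(\mathcal A_M^c)$ via Markov with $p\asymp n^{1+2/d}$ is not feasible: even for GOE/GUE, moments of $\tr(X/\sqrt n)^{2p}$ at $p\gg n$ are not tractable, and for $d=3$ you are asking for $p\asymp n^{5/3}$. You acknowledge this yourself; it is not a technicality that ``refined estimates'' will fix, because the moment method simply does not reach that regime.

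The paper avoids moments entirely and instead derives a concentration inequality for the \emph{number} of eigenvalues outside an interval directly from the convex concentration property. The observation is that $\{\mathcal N[M,\infty)\geq k\}\subset\{\tr_{[k]}g(X)\geq k\}$ for $g(x)=\tfrac{2}{M}(x-M/2)_+$, and since $Y\mapsto\tr_{[k]}g(Y)$ is convex and $(2\sqrt k/M)$-Lipschitz in Hilbert--Schmidt norm (by the same argument you used for $h_M$, applied to the top-$k$ truncation), the convex concentration hypothesis gives $\PP(\mathcal N[M\sqrt n,\infty)\geq k)\leq c^{-1}\exp(-cM^2nk/16)$. For this to be $e^{-\omega(n^{1+2/d})}$ one needs $M^2k\gg n^{2/d}$; with $M$ \emph{fixed} this would force $k\gg n^{2/d}$, which is strictly stronger than the hypothesis $k\gg n^{1/(d-1)}$ (note $\tfrac1{d-1}<\tfrac2d$ for $d\geq 3$). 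The paper therefore lets $M\to\infty$ with $n$, chosen so that simultaneously $M^{2(d-1)}=o(n^{(d-2)/d})$ (keeping the concentration of $h_M$ super-exponential at speed $n^{1+2/d}$) and $M^2k\gg n^{2/d}$. The assumption $n^{1/(d-1)}=o(k)$ is precisely equivalent to the existence of such an $M$, which explains why that threshold appears. Replacing your moment step by this eigenvalue-counting argument, and letting $M$ grow, closes the gap.
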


\subsubsection{Concentration inequalities}\label{conc}
In order to prove Lemma \ref{equivexpo}, we will develop some deviations inequalities for the number of eigenvalues falling outside an interval and for truncated linear statistics of Hermitian random matrices satisfying the convex concentration property defined in \ref{concconv}. The proof of these inequalities will follow the now classical path laid by the work of Guionnet and Zeitouni in \cite{GZconc}.

\begin{Pro}\label{conckconv}  Let $X$ be a random Hermitian matrix satisfying the convex concentration property for some constant $c>0$. Let $k \in \{1,\ldots,n\}$ and let $f : \RR \to \RR$ be a convex $1$-Lipschitz function which achieves its infimum on $\RR$. For any $t>0$,
$$\PP \Big( \big| \tr_{[k]} f(X) - \EE \tr_{[k]} f(X)\big| > t \Big) \leq c^{-1}\exp\Big( -\frac{ ct^{2}}{k}\Big),$$
where $\tr_{[k]}$ is the truncated trace defined in \eqref{trunctrdef}.
\end{Pro}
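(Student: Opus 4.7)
The plan is to verify that $F : X \mapsto \tr_{[k]} f(X)$ is a convex and $\sqrt{k}$-Lipschitz function on $\mathcal{H}_n$ with respect to the Hilbert--Schmidt norm, and then to apply the convex concentration property of Definition \ref{concconv} to the rescaled function $F/\sqrt{k}$, which is then convex and $1$-Lipschitz.

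For the convexity, I would rely on the Ky Fan variational principle to write
$$\tr_{[k]} f(X) = \max_{S \subset \{1,\ldots,n\}, \, |S|=k} \sum_{i \in S} f(\lambda_i(X)),$$
so that $F(X) = g(\lambda_1(X),\ldots,\lambda_n(X))$, where $g : \RR^n \to \RR$ is defined by $g(\lambda_1,\ldots,\lambda_n) = \max_{|S|=k} \sum_{i\in S} f(\lambda_i)$. This $g$ is symmetric under permutations of its coordinates and convex (as a maximum of sums of convex functions of each coordinate), so Davis's classical theorem on unitarily invariant convex functions of the spectrum grants that $X \in \mathcal{H}_n \mapsto g(\lambda(X))$ is convex. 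I expect this step to be the main conceptual difficulty: the more naive candidate $X \mapsto \tr(Pf(X))$ for a fixed rank-$k$ projection $P$ is in general \emph{not} convex when $f$ is merely convex (it would require $f$ to be operator convex), and so one cannot conclude pointwise in $P$; the passage through Davis's theorem is precisely what makes taking the supremum over $P$ first, and then checking symmetric convexity, yield the convexity of $F$.

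For the Lipschitz bound, let $\nu_i(X)$ denote the $i$-th largest eigenvalue of $f(X)$. Since the multisets $\{\nu_i(X)\}_{i=1}^n$ and $\{f(\lambda_j(X))\}_{j=1}^n$ coincide, and likewise for $Y$, the $L^2$ rearrangement inequality yields
$$\sum_{i=1}^n \big(\nu_i(X) - \nu_i(Y)\big)^2 \leq \sum_{i=1}^n \big(f(\lambda_i(X)) - f(\lambda_i(Y))\big)^2 \leq \sum_{i=1}^n \big(\lambda_i(X) - \lambda_i(Y)\big)^2 \leq ||X-Y||_2^2,$$
where the second inequality uses that $f$ is $1$-Lipschitz and the third is the Hoffman--Wielandt inequality ($||\cdot||_2$ denoting the Hilbert--Schmidt norm). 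A Cauchy--Schwarz in the truncated sum then produces
$$|F(X) - F(Y)| \leq \sum_{i=1}^k |\nu_i(X) - \nu_i(Y)| \leq \sqrt{k}\,||X-Y||_2.$$

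Putting the pieces together, $F/\sqrt{k}$ is a convex $1$-Lipschitz function on $\mathcal{H}_n$, so Definition \ref{concconv} gives that $\EE F(X)$ is well-defined and
$$\PP\big( |F(X) - \EE F(X)| > t \big) \leq c^{-1}\exp\big(-c\, t^2 / k\big)$$
after the rescaling $t \leadsto t/\sqrt{k}$. The assumption that $f$ attains its infimum on $\RR$ is only used to guarantee that $f$ is bounded from below, hence so is $F$, so that the expectation $\EE F(X)$ appearing in the centering of the convex concentration property is finite.
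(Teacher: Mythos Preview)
Your proof is correct and takes a genuinely different route from the paper. The paper establishes convexity and the $\sqrt{k}$-Lipschitz bound simultaneously via a single variational representation (a Klein-type lemma): after normalizing so that $f(0)=0$ and $0\in\partial f(0)$, it shows
\[
\tr_{[k]} f(X) = \sup_{\rk Y \leq k} \big\{ \tr f(Y) + \tr\big(\zeta(Y)(X-Y)\big) \big\},
\]
with $\zeta(x)\in\partial f(x)$ and $\zeta(0)=0$. This is a supremum of affine functions of $X$, hence convex, and since $\|\zeta(Y)\|_2\leq\sqrt{k}$ for $\rk Y\leq k$, the Lipschitz bound follows immediately. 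The hypothesis that $f$ attains its infimum is used substantively here, to arrange the normalization $0\in\partial f(0)$.

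Your argument instead separates the two pieces: Davis's theorem for convexity (via the symmetric convex function $g(\lambda)=\max_{|S|=k}\sum_{i\in S}f(\lambda_i)$), and a rearrangement inequality combined with Hoffman--Wielandt for the Lipschitz bound. This is clean and modular, and indeed makes the ``infimum attained'' assumption essentially superfluous for the proposition itself. The trade-off is that the paper's variational representation is not merely a device for this proposition: it is reused later (see the derivation of the set $W$ in \S\ref{lbtr}) to bound the increments $f_k(X)-f_k(Y)$ by a supremum over low-rank matrices, which is the key input to the complexity estimate. So the paper's route earns its keep downstream, whereas yours would require a separate argument at that stage.
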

In order to apply our convex concentration property, we need to prove that the truncated linear statistics are convex Lipschitz functions of the entries. This is the object of the following lemma.

\begin{Lem}\label{convex}
Let $f : \RR \to \RR$ be a convex function which achieves its infimum on $\RR$. Let $k\in \{1,\ldots,n\}$. The function $T_f$ defined by,
$$\forall X \in \mathcal{H}_n, \ T_f(X) = \tr_{[k]} f(X),$$
 is  convex. Moreover, if $f$ is $1$-Lipschitz, then $T_f$ is $\sqrt{k}$-Lipschitz with respect to the Hilbert-Schmidt norm.
\end{Lem}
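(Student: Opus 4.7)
The plan is to express $T_f$ as a spectral function $T_f(X) = \phi(\lambda(X))$ for a suitable symmetric convex function $\phi$ on $\RR^n$, and then invoke the classical principle of Davis--Lewis that composing a symmetric convex function with the eigenvalue map produces a convex function on the space of Hermitian matrices.

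First, by functional calculus the eigenvalues of $f(X)$ are $\{f(\lambda_i(X))\}_{i=1}^n$, and hence the sum of the top $k$ admits the combinatorial description
$$T_f(X) \;=\; \max_{\substack{S \subset \{1,\ldots,n\} \\ |S|=k}} \sum_{i\in S} f(\lambda_i(X)) \;=\; \phi(\lambda(X)), \qquad \phi(x) := \max_{|S|=k} \sum_{i\in S} f(x_i).$$
The function $\phi$ is manifestly invariant under permutations of coordinates, and is convex as a finite maximum of the convex functions $x\mapsto \sum_{i\in S}f(x_i)$ (each being a sum of one-dimensional convex functions). Applying Davis--Lewis to this symmetric convex $\phi$ yields convexity of $T_f = \phi\circ \lambda$ on $\mathcal{H}_n$.

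For the Lipschitz statement, assuming in addition that $f$ is $1$-Lipschitz, the function $\phi$ is $\sqrt{k}$-Lipschitz on $\RR^n$ with respect to the Euclidean norm: for each $S$ of cardinality $k$, Cauchy--Schwarz gives $\big|\sum_{i\in S}f(x_i) - \sum_{i\in S}f(y_i)\big| \leq \sum_{i\in S}|x_i - y_i|\leq \sqrt{k}\,\|x-y\|_{\ell^2}$, and the maximum over $S$ of a $\sqrt{k}$-Lipschitz family is itself $\sqrt{k}$-Lipschitz. Combining this with the Hoffman--Wielandt inequality $\|\lambda(X)-\lambda(Y)\|_{\ell^2}\leq \|X-Y\|_{2}$ (eigenvalues compared in their common non-increasing order) transfers the bound to $T_f$ and establishes the $\sqrt{k}$-Lipschitz estimate with respect to the Hilbert--Schmidt norm.

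The main subtlety is the invocation of Davis--Lewis. A more na\"ive route via the Ky Fan variational formula $T_f(X) = \sup_{P}\tr(Pf(X))$ over rank-$k$ orthogonal projections $P$ would demand convexity in $X$ of $\tr(Pf(X))$ for each fixed $P$, which by Hansen--Pedersen is equivalent to operator convexity of $f$---a strictly stronger property that already fails for $f(x)=x^4$. Transferring convexity through the eigenvalue map rather than through projection-weighted traces of $f(X)$ is therefore essential.
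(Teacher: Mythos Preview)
Your argument is correct. Writing $T_f=\phi\circ\lambda$ with $\phi(x)=\max_{|S|=k}\sum_{i\in S}f(x_i)$ and invoking the Davis--Lewis theorem is a clean way to obtain convexity, and coupling the $\sqrt{k}$-Lipschitz bound on $\phi$ with Hoffman--Wielandt transfers the Lipschitz estimate to $\mathcal{H}_n$. As a bonus, your route does not use the hypothesis that $f$ achieves its infimum.

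The paper proceeds differently, by a hands-on variation of Klein's lemma. After normalizing so that $f(0)=0$ and $0\in\partial f(0)$, it establishes the explicit representation
\[
T_f(X)=\sup_{\rk Y\le k}\big\{\tr f(Y)+\tr\big(\zeta(Y)(X-Y)\big)\big\},
\]
where $\zeta$ is a selection from $\partial f$ with $\zeta(0)=0$. Convexity and the $\sqrt{k}$-Lipschitz bound both follow from this supremum-of-affine-functions form, since $\|\zeta(Y)\|_2\le\sqrt{k}$ whenever $\rk Y\le k$ and $|\zeta|\le 1$. The payoff of the paper's approach is that this variational formula is precisely what is needed downstream: in the proof of the large deviation upper bound it is used to bound increments $T_f(X)-T_f(Y)$ by $\sup_{H\in W}\tr H(X-Y)$ with $W$ a set of matrices of rank at most $2k$ and controlled operator norm, which is the ``low-complexity gradient'' input required to apply Theorem~\ref{NL}. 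Your Davis--Lewis argument proves the lemma as stated more economically, but does not by itself exhibit this low-rank subgradient structure; for the application one would still need to supply the representation separately.
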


\begin{proof}\renewcommand{\qedsymbol}{} The proof is a variation around the one of Klein's lemma (see  \cite[Lemma 4.4.12]{AGZ}). Since $f$ achieves its infimum, we know that there exists $x_0$ such that $0 \in \partial f(x_0)$. Considering $\tilde{f} = f(.+x_0)-f(x_0)$, we may and will assume that $x_0 = 0$ and $f(0)=0$.
We will show the following representation of $T_f$ as supremum of affine functions.
\end{proof}
\begin{Lem}\label{fnsup} Let $f:\RR \to \RR$ be a convex function.
Assume $f(0)=0$ and $0\in \partial f (0)$. Let $\zeta : \RR \to \RR$ be a function such that, 
$$ \forall x \in \RR, \ \zeta(x) \in \partial f(x), \text{ and } \zeta(0)=0.$$
Then, for any $X \in \mathcal{H}_n$,
\begin{equation} \label{respresaff} T_f(X) = \sup_{ \rk Y \leq k} \{ \tr f(Y) + \tr \zeta(Y)(X-Y) \}.\end{equation}
Moreover, if $f$ is $1$-Lipschitz, then $T_f$ is $\sqrt{k}$-Lipschitz with respect to the Hilbert-Schmidt norm.
\end{Lem}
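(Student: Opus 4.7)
The plan is to establish the variational representation \eqref{respresaff} by proving the two inequalities separately, and then deduce the Lipschitz estimate from the fact that each affine function $Y \mapsto \tr f(Y) + \tr \zeta(Y)(X-Y)$ (viewed as a function of $X$) has Hilbert--Schmidt gradient of norm at most $\sqrt{k}$.

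For the easy direction ($\geq$), I would produce a matrix $Y$ at which the affine functional recovers $T_f(X)$. Since $T_f(X) = \sum_{i=1}^k \lambda_i(f(X))$ picks out the top $k$ values of $\{f(\lambda_j(X))\}_{j=1}^n$, one has to be careful to align $Y$ with these, not with the top $k$ eigenvalues of $X$ themselves (these can differ when $f$ is not monotone, e.g.\ $f(x)=x^2$). Accordingly, let $\sigma$ be a permutation that orders $f(\lambda_{\sigma(1)}(X)) \geq \cdots \geq f(\lambda_{\sigma(n)}(X))$ and set $Y = \sum_{i=1}^k \lambda_{\sigma(i)}(X)\, u_{\sigma(i)} u_{\sigma(i)}^*$, with $u_j$ the eigenvectors of $X$. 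Then $\rk Y \leq k$, $\tr f(Y) = T_f(X)$ (using $f(0)=0$ to discard the kernel of $Y$), and $\tr \zeta(Y)(X-Y) = 0$ because the $u_{\sigma(i)}$ are eigenvectors common to $X$, $Y$, and $\zeta(Y)$ (using $\zeta(0)=0$).

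For the upper bound ($\leq$), given any $Y$ with $\rk Y\leq k$, write $Y = \sum_{i=1}^k \mu_i v_i v_i^*$ for some orthonormal $v_1,\dots,v_k$ (allowing zero $\mu_i$'s) and expand
\[
\tr f(Y) + \tr \zeta(Y)(X-Y) = \sum_{i=1}^k \bigl[f(\mu_i) + \zeta(\mu_i)(v_i^* X v_i - \mu_i)\bigr].
\]
The subgradient inequality for $f$ at each $\mu_i$ bounds this by $\sum_{i=1}^k f(v_i^* X v_i)$. Then Jensen's inequality applied to the spectral probability measure of $X$ weighted by $|\langle v_i, u_j\rangle|^2$ gives $f(v_i^* X v_i) \leq \langle v_i, f(X) v_i\rangle$, and summing over $i$ and invoking the Ky Fan $k$-norm variational principle $\sum_{i=1}^k \langle v_i, f(X) v_i\rangle \leq \sum_{i=1}^k \lambda_i(f(X)) = T_f(X)$ closes the argument. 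This chain (convexity, then Jensen, then Ky Fan) is the main technical point; the direction of Jensen here is the one that works precisely because $f \geq 0$ and we are taking a top-$k$ sum.

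Finally, for the Lipschitz property, note that \eqref{respresaff} exhibits $T_f$ as a supremum of affine functions of $X$, whose slope is $\zeta(Y)$. When $f$ is $1$-Lipschitz, $|\zeta(y)| \leq 1$ for all $y$, so $\|\zeta(Y)\|_{\mathrm{HS}}^2 = \sum_{i=1}^k \zeta(\mu_i)^2 \leq k$. Comparing $T_f(X_1)$ with the value of the affine functional at $X_2$ associated with a near-optimal $Y$ for $X_1$, Cauchy--Schwarz then yields $|T_f(X_1) - T_f(X_2)| \leq \sqrt{k}\, \|X_1 - X_2\|_{\mathrm{HS}}$, which is the claim.
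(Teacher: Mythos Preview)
Your proof is correct and follows essentially the same path as the paper's: both produce the optimal $Y$ from the eigenvectors of $X$ achieving the top $k$ values of $f(\lambda_j(X))$, and both bound the affine functional from above by averaging the subgradient inequality against the weights $|\langle v_i,u_j\rangle|^2$ (you phrase the last step via Jensen and Ky Fan's maximum principle, the paper via the linear program $\max\{\sum_j \alpha_j f(\lambda_j):\alpha_j\in[0,1],\ \sum_j\alpha_j=k\}$, but these are the same computation). One small inaccuracy: your parenthetical attributes the direction of Jensen's inequality to $f\geq 0$, but in fact it holds simply because $f$ is convex; nonnegativity plays no role at that step.
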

\begin{proof}
 First, we know by \cite[Theorem 23.4]{Rockafeller} that for every $x \in \RR$, $\partial f (x) \neq \emptyset$, which justifies the existence of $\zeta$.
Let $\lambda_1,\ldots,\lambda_n$ be the eigenvalues of $X$ such that $f(\lambda_1)\geq \ldots \geq  f(\lambda_n)$. Let $u_1,\ldots,u_n$ be the associated eigenvectors. Note that because we assumed $f(0)=0$, then if we take $Y = \sum_{i=1}^k \lambda_i u_i u_i^*$, we get the equality,
$$ T_f(X) = \tr f(Y) + \tr \zeta(Y)(X-Y),$$
 using the orthogonality of the eigenvectors. Therefore only the inequality is left to prove. Let $Y \in\mathcal{H}_n$ with rank less that $k$, and denote by $e_1,\ldots,e_n$ an orthonormal basis of eigenvectors such that $e_{k+1},\ldots,e_n$ are in the kernel, and by $\mu_1,\ldots,\mu_k$ the eigenvalues associated to $e_1,\ldots,e_k$. 
From the convexity of $f$, we have for any $j \in \{1,\ldots,n\}$, $i \in \{1,\ldots,k\}$, 
$$f(\lambda_j) \geq f(\mu_i) + \zeta(\mu_i)(\lambda_j-\mu_i).$$
Multiplying the above inequality by $|\langle u_j, e_i \rangle|^2$, and summing over $j \in\{1,\ldots,n\}$, we get
\begin{equation}\label{convtruntr} \sum_{j=1}^n | \langle u_j, e_i \rangle|^2 f(\lambda_j) \geq f(\mu_i) + \sum_{j=1}^n |\langle u_j, e_i \rangle|^2 \zeta(\mu_i) (\lambda_j -\mu_i).\end{equation}
Writing the trace of $f'(Y)(X-Y)$ in the basis of the $e_i$'s, we have
\begin{align*}
 \tr \zeta(Y)(X-Y)& = \sum_{i=1}^n \langle e_i,  \zeta(Y)(X-Y) e_i\rangle \\
&= \sum_{i=1}^n \langle \zeta(Y)e_i,  (X-Y) e_i\rangle\\
& = \sum_{i=1}^k \langle \zeta(Y)e_i,  (X-Y) e_i\rangle,
\end{align*}
where we used the fact that $\zeta(0)=0$. Finally, using the spectral decomposition of $X$, we get
$$  \tr \zeta(Y)(X-Y) = \sum_{i=1}^k \sum_{j=1}^n |\langle u_j, e_i \rangle|^2 \zeta(\mu_i) (\lambda_j -\mu_i).$$
Summing \eqref{convtruntr} now over $i\in\{1,\ldots,k\}$, we deduce, writing $\alpha_j = \sum_{i=1}^k |\langle u_j, e_i\rangle|^2$, that
$$\sum_{j=1}^n \alpha_j f(\lambda_j) \geq \tr f(Y) +  \tr \zeta(Y)(X-Y) .$$
Observe that $\alpha_j \in [0,1]$ and $\sum_j \alpha_j =k$. The maximum 
$$ \max \{ \sum_j \alpha_j f(\lambda_j) : \alpha_j \in [0,1],  \ \sum_j \alpha_j =k\},$$
is achieved at the vector $\alpha = \Car_{J}$, where $J$ is the set of indices $j$ corresponding to the $k$ highest values of $f(\lambda_j)$. 
This shows that the representation \eqref{respresaff} holds, and that $T_f$ is a convex function.

Assume further that $f$ is $1$-Lipschitz. This entails that $|\zeta(x)|\leq 1$ for any $x\in\RR$. Consequently, $||\zeta(Y)||_2 \leq \sqrt{k}$ for any $Y\in\mathcal{H}_n$ with $\rk(Y) \leq k$. We conclude from the representation \eqref{respresaff} that $T_f$ is $\sqrt{k}$-Lipschitz w.r.t the Hilbert-Schmidt norm.
\end{proof}

As a consequence of Proposition \ref{conckconv}, we get the following deviation estimate on the number of eigenvalues present outside the bulk.
\begin{Pro}\label{probatrou} Let $X$ be a random Hermitian matrix satisfying the convex concentration property with constant $c>0$. Denote by $\lambda_1(X)$ its top eigenvalue and $||X||$ its operator norm.
For any $M\geq 4\EE( \lambda_1(X))_+$, and $1\leq k \leq n$,
$$ \PP \big(  \mathcal{N}[M,+\infty) \geq k \big) \leq   c^{-1}\exp \Big( -\frac{c M^2 k}{16}\Big),$$
where for any $I \subset \RR$, $\mathcal{N}(I)$ denotes the number of eigenvalues of $X$ in $I$.
As a consequence, for any $M\geq 4\EE ||X||$  and $1\leq k \leq n$,
$$\PP \big(  \mathcal{N}\big((-M,M)^c\big) \geq k \big) \leq 2c^{-1}\exp\left( -\frac{c  M^2k}{32}\right).$$
\end{Pro}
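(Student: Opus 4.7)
The plan is to express the event $\{\mathcal{N}[M, +\infty) \geq k\}$ as a deviation event for a convex $1$-Lipschitz truncated linear statistic, and then invoke Proposition \ref{conckconv}. The natural candidate is the convex, $1$-Lipschitz, non-decreasing function $f(x) = (x - M/2)_+$, which achieves its infimum on $(-\infty, M/2]$, so Proposition \ref{conckconv} applies to $T := \tr_{[k]} f(X)$.

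Two elementary bounds will then do the work. On the event $\{\mathcal{N}[M, +\infty) \geq k\}$, the $k$ largest eigenvalues of $X$ all satisfy $\lambda_i(X) \geq M$, so
$$T \geq k\, f(M) = \frac{kM}{2}.$$
On the other hand, since $f$ is non-decreasing and satisfies $f(x) \leq x_+$ for every $x \in \RR$,
$$\EE T \leq k\, \EE f(\lambda_1(X)) \leq k\, \EE (\lambda_1(X))_+ \leq \frac{kM}{4},$$
where the last step uses the hypothesis $M \geq 4\,\EE(\lambda_1(X))_+$. Combining these two inequalities, on the target event we have $T - \EE T \geq kM/4$, and Proposition \ref{conckconv} gives
$$\PP\big(\mathcal{N}[M, +\infty) \geq k\big) \leq c^{-1}\exp\Big(-\frac{c(kM/4)^2}{k}\Big) = c^{-1} \exp\Big(-\frac{cM^2 k}{16}\Big),$$
which is the first claimed bound.

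For the symmetric bound I would apply the first part both to $X$ and to $-X$. The convex concentration property is preserved under $X \mapsto -X$ (since $H \mapsto g(-H)$ is convex and $1$-Lipschitz whenever $g$ is), and $\lambda_1(-X) = -\lambda_n(X)$, so the hypothesis $M \geq 4\,\EE\|X\|$ simultaneously controls $\EE(\lambda_1(X))_+$ and $\EE(\lambda_1(-X))_+$. Since $\mathcal{N}((-M,M)^c) = \mathcal{N}[M,+\infty) + \mathcal{N}(-\infty,-M]$ is a sum of two non-negative integers, $\{\mathcal{N}((-M,M)^c) \geq k\}$ forces at least one summand to be $\geq \lceil k/2 \rceil$; a union bound then yields the claimed estimate with prefactor $2c^{-1}$ and exponent $cM^2 k/32$.

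No serious obstacle is expected here: the main nuance is the choice of cutoff at $M/2$ rather than $M$ inside $f$, which is precisely what produces the $kM/4$ deviation gap needed to close the argument through Proposition \ref{conckconv}.
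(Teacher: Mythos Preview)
Your proposal is correct and is essentially the same argument as the paper's. The only cosmetic difference is that the paper uses the rescaled function $f(x)=\tfrac{2}{M}(x-M/2)_+$ (which is $(2/M)$-Lipschitz and satisfies $f(M)\geq 1$) while you use the $1$-Lipschitz version $f(x)=(x-M/2)_+$; after tracking the Lipschitz constant through Proposition~\ref{conckconv} the two computations coincide, and the union-bound step for the two-sided estimate is identical.
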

\begin{proof}
 Let $f(x)= \frac{2}{M}(x-M/2)_+$ for any $x\in \RR$.
 Since $f(x) \geq 1$ for $x\geq M$,  we have, denoting by $\lambda_1(X),\ldots,\lambda_n(X)$ the eigenvalues of $X$ in non-increasing order,
$$\PP \big(  \mathcal{N}[M,+\infty) \geq k \big) \leq  \PP\big( \sum_{i=1}^k f(\lambda_i(X)) \geq k\big).$$
But 
$$\EE\Big( \sum_{i=1}^k f(\lambda_i(X)) \Big)\leq \frac{2}{M}k \EE (\lambda_1(X))_+ \leq \frac{k}{2},$$
for $M\geq 4\EE (\lambda_1(X))_+$. Thus,
$$\PP \big(  \mathcal{N}[M,+\infty) \geq k \big) \leq  \PP\Big( \sum_{i=1}^k f(\lambda_i(X))- \EE \sum_{i=1}^k f(\lambda_i(X)) \geq k/2\Big).$$
Applying Proposition \ref{conckconv}, we deduce that
\begin{equation} \label{dev} \PP \big(  \mathcal{N}[M,+\infty) \geq k \big) \leq   c^{-1}e^{-\frac{ck M^2}{16}},\end{equation}
which gives the first claim.

Since by definition $\lceil k/2 \rceil -1 <k/2$, we have using a union bound,
$$\PP \big(  \mathcal{N}\big((-M,M)^c\big) \geq k \big) \leq \PP \big(  \mathcal{N}(-\infty,-M] \geq \ell \big)  +\PP \big(  \mathcal{N}[M,+\infty) \geq \ell \big),$$
where $\ell = \lceil k/2 \rceil$.
For any $M\geq 4 \EE || X||$, we obtain by applying inequality \eqref{dev} to $X$ and $-X$ alternatively, 
$$\PP \big(  \mathcal{N}\big((-M,M)^c\big) \geq k \big) \leq  2c^{-1}e^{-\frac{c k M^2}{32}},$$
which gives the second claim.

\end{proof}

\subsubsection{An exponential equivalent}
In this section, we apply the concentration inequalities we obtained in the previous section to give a proof of the exponential equivalent of Lemma \ref{equivexpo}.
\begin{proof}[Proof of Lemma \ref{equivexpo}]Let $k$ such that  $n^{\frac{1}{d-1}} = o(k)$ and $k=o(n)$. We will prove the claim only in the case where $d$ is even, the case where $d$ is odd being almost the same.
Let $M>0$, and define the truncated power function, by $f_M(x) = x^d$ for $|x|\leq M$, and for $|x|>M$ by,
$$f_M(x) = 
dM^{d-1}(|x|-M) +M^d,$$
Let $\lambda^*_1,\ldots,\lambda^*_n$ be the eigenvalues of $X/\sqrt{n}$ in decreasing absolute values. We can write,
$$ \sum_{i=k+1}^n f_M(\lambda_i^*) = \tr f_M(X/\sqrt{n}) - \tr_{[k]} f_M(X/\sqrt{n}).$$
As $f_M(./\sqrt{n})$ is $dM^{d-1}/\sqrt{n}$-Lipschitz, applying Proposition \ref{conckconv} alternatively to $\tr f_M(X/\sqrt{n})$ and $\tr_{[k]} f_M(X/\sqrt{n})$, and performing a union bound, we deduce that for any $t>0$,
\begin{equation} \label{conctrunc} \PP\Big( \big|\sum_{i=k+1}^{n} f_M(\lambda_i^*)  - \EE \sum_{i=k+1}^{n} f_M(\lambda_i^*)  \big| >tn \Big) \leq 2c^{-1}\exp\Big( - \frac{c n^2 t^2}{4d^2 M^{2(d-1)}}\Big),\end{equation}
where we used the fact that $k\leq n$. On one hand,
\begin{align*}
\big|\EE \sum_{i=k+1}^{n} {\lambda_i^*}^d -  \EE \sum_{i=k+1}^{n} f_M(\lambda_i^*)\big|& \leq  \EE \sum_{i=1}^n |\lambda_i^*|^d \Car_{|\lambda_i^*|\geq M} \\
&\leq \frac{1}{M}\EE \tr |X/\sqrt{n}|^{d+1}.
\end{align*} 
 Using twice Cauchy-Schwarz inequality, we get
$$  \frac{1}{n} \EE \tr |X/\sqrt{n}|^{d+1} \leq \EE \big(\frac{1}{n}\tr (X/\sqrt{n})^{2(d+1)}\big)^{\frac{1}{2}} \leq  \big(\EE\frac{1}{n}\tr (X/\sqrt{n})^{2(d+1)}\big)^{\frac{1}{2}}=O(1),$$
where we used in the last equality \cite[Lemma 2.1.6]{AGZ}. Therefore,
$$ \big|\EE \sum_{i=k+1}^{n} {\lambda_i^*}^d -  \EE \sum_{i=k+1}^{n} f_M(\lambda_i^*)\big|=O\Big(\frac{n}{M}\Big).$$
On the other hand,
$$ |\EE\sum_{i=k+1}^{n} {\lambda_i^*}^d - \EE \tr (X/\sqrt{n})^d |\leq k\EE || X/\sqrt{n}||^d = O(k),$$
as $\EE || X/\sqrt{n}||^d =O(1)$ by \cite[Exercice 2.1.27]{AGZ}. 
Therefore, if $k=o(n)$ and $M$ goes to infinity with $n$, then as  $\EE \frac{1}{n} \tr (X/\sqrt{n})^d$ converges to $\mu_{sc}(x^d)$, we have,
\begin{equation} \label{convertr}\big|\mu_{sc}(x^d) -  \EE\Big( \frac{1}{n} \sum_{i=k+1}^{n} f_M(\lambda_i^*)\Big)\big|
\underset{n\to +\infty}{\longrightarrow} 0.\end{equation}

Fix $t>0$. Now let $M$ go to infinity with $n$ such that $n^{1+2/d} = o(n^2/M^{2(d-1)})$, that is $M^2 =o(n^{a})$ with $a = (d-2)/(d(d-1))$. The above inequality \eqref{conctrunc} and the convergence \eqref{convertr} give 
\begin{equation} \label{expoeqM} \lim_{n\to +\infty} \frac{1}{n^{1+\frac{2}{d}}} \log  \PP\Big(\big | \frac{1}{n}\sum_{i=k+1}^{n} f_M(\lambda_i^*)  - \mu_{sc}(x^d) \big| >t \Big)  = -\infty.\end{equation}
But since  $\EE || X || =O(\sqrt{n})$ by \cite[Exercice 2.1.29]{AGZ}, we deduce from Proposition \ref{probatrou} that in order to have that 
\begin{equation} \label{gap} \lim_{n\to +\infty} \frac{1}{n^{1+\frac{2}{d}}} \log  \PP\Big(\mathcal{N}\big([-R\sqrt{n},R\sqrt{n}]^c\big)\geq k \Big)  = -\infty,\end{equation}
it is sufficient to take $R$ going to $+\infty$ with $n$ such that $n^{2/d} = o(kR^2)$. We assumed that $n^{\frac{1}{d-1}} = o(k)$, which is equivalent to say that,
$$n^{\frac{2}{d}} = o(kn^a).$$ 
Therefore, we can find $M$ going to infinity which satisfies both conditions,
$$ M^2 = o(n^a), \text{ and } n^{\frac{2}{d}} = o(kM^2).$$
With this choice of $M$, both estimates \eqref{expoeqM}, and \eqref{gap} with $R=M$, hold.
 But then,
$$\PP \Big( \big| \sum_{i=k+1}^n {\lambda_i^*}^d - \sum_{i=k+1}^n f_M(\lambda_i^*)  \big| > t n \Big) \leq \PP\Big(\mathcal{N}([-M\sqrt{n},M\sqrt{n}]^c)\geq k \Big) $$
Therefore, 
$$\limsup_{n\to +\infty} \frac{1}{n^{1+\frac{2}{d}}} \log \PP \Big( \big| \sum_{i=k+1}^n {\lambda_i^*}^d - \sum_{i=k+1}^n f_M(\lambda_i^*)  \big| > t n \Big)=-\infty,$$
which together with \eqref{expoeqM} give the claim.

\end{proof}

\subsubsection{Proof of the large deviation upper bound}\label{lbtr}

From Lemma \ref{equivexpo}, we see that it suffices to understand the large deviations of a truncated trace $f_{k}(X)$ defined in \eqref{trunctr}, with $n^{\frac{1}{d-1}} = o(k)$, and $k=o(n)$. The point is that this truncation lowers significantly the complexity. Indeed, we will be able to encode by $O(nk\log n)$ bits the ``gradient'' of the truncated trace $\tr_{[k]}(X^d)$. Thus, Proposition \ref{NL} will give us a relevant upper bound, with respect to the speed $n^{1+\frac{2}{d}}$, as soon as we can take $k\log n = o(n^{\frac{2}{d}})$. But, for $d\geq 3$, we see that
$$ \frac{1}{d-1} < \frac{2}{d}.$$
Therefore, we can and will take $k$ which satisfies both conditions $n^{\frac{1}{d-1}} = o(k)$ and $k\log n =o(n^{\frac{2}{d}})$.

\textbf{Property of the rate function $I_+$.}
We begin with proving that the rate function $I_+$ defined in Theorem \ref{LDPtrb} is a good rate function. It will follow from the next lemma.

\begin{Lem}\label{lsci}
Let $c_n : \RR^n \to \RR_+$ and  $F_n : \RR^n \to \RR$ be families of functions indexed by $n\in\NN$, such that for any $r>0$, the subset
$$ \bigcup_{n\in \NN}F_n(\{ c_n \leq r \}),$$
is bounded. Then the functions defined as,
$$ \forall x \in \RR, \ I^{(n)}(x) = \sup_{\delta>0} \inf_{m\geq n} I_{m,\delta}(x),$$
and 
$$ \forall x \in \RR, \ I_+(x) = \sup_{\delta>0} \liminf_{n\to +\infty} I_{n,\delta}(x),$$
with
$$ I_{m,\delta}(x) = \inf \big\{ c_m(h) : | F_m(h) -x|<\delta, h \in\RR^m \big\},$$
are good rate functions.
\end{Lem}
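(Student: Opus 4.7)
The plan is to verify the two defining properties of a good rate function, namely nonnegativity (immediate from $c_n \geq 0$), lower semicontinuity, and compactness of level sets, by exploiting the monotonicity of $\delta \mapsto I_{m,\delta}(x)$ together with the uniform boundedness assumption on $\bigcup_n F_n(\{c_n\leq r\})$.

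First I would record that $I_{m,\delta}(x)$ is non-increasing in $\delta$: indeed, shrinking $\delta$ shrinks the feasible set $\{h : |F_m(h)-x|<\delta\}$, so the infimum can only grow. Consequently the supremum over $\delta>0$ in the definitions of $I^{(n)}$ and $I_+$ is really a monotone limit as $\delta\downarrow 0$, a fact I will use freely.

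Next I would establish lower semicontinuity of $I_+$, which amounts to showing that $\{I_+\leq r\}$ is closed for every $r\geq 0$. Suppose $x_k\to x$ with $I_+(x_k)\leq r$ for all $k$. Fix $\delta>0$ and pick $k$ so large that $|x-x_k|<\delta/2$; then any $h$ with $|F_n(h)-x_k|<\delta/2$ automatically satisfies $|F_n(h)-x|<\delta$, so the feasible set defining $I_{n,\delta}(x)$ contains the one defining $I_{n,\delta/2}(x_k)$, yielding $I_{n,\delta}(x)\leq I_{n,\delta/2}(x_k)$. Taking $\liminf_n$ and using $I_+(x_k)\leq r$ gives $\liminf_n I_{n,\delta}(x)\leq r$. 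Since $\delta$ was arbitrary, $I_+(x)\leq r$. The same argument, with $\liminf_n$ replaced by $\inf_{m\geq n}$, handles $I^{(n)}$.

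For compactness, it now suffices to show that the level sets are bounded subsets of $\RR$. Fix $r\geq 0$ and suppose $I_+(x)\leq r$. For every $\delta>0$ we then have $\liminf_n I_{n,\delta}(x)\leq r$, so there exist infinitely many $n$ for which one can find $h_n\in\RR^n$ with $c_n(h_n)\leq r+1$ and $|F_n(h_n)-x|<\delta$. By the standing assumption, $M_r := \sup\{|y|: y\in\bigcup_n F_n(\{c_n\leq r+1\})\}$ is finite, so $|x|\leq M_r+\delta$; letting $\delta\downarrow 0$ gives $|x|\leq M_r$. Thus $\{I_+\leq r\}$ is a closed bounded subset of $\RR$, hence compact, and the same argument works verbatim for $I^{(n)}$ (replacing $\liminf_n$ with $\inf_{m\geq n}$, one gets an $m\geq n$ and an $h\in\RR^m$ with the same properties).

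The argument is almost entirely formal: the only genuine ingredient is the monotonicity of $I_{m,\delta}$ in $\delta$, which legitimizes the interchange of the outer $\sup_{\delta>0}$ with limits, and the hypothesis on $\bigcup_n F_n(\{c_n\leq r\})$, which supplies the a priori bound needed for compactness. There is no real obstacle beyond keeping track of the order in which $\delta$, $n$, and the sequence $x_k$ are sent to their limits.
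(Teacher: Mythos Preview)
Your argument is correct. You verify lower semicontinuity sequentially (if $x_k\to x$ with $I_+(x_k)\le r$ then $I_+(x)\le r$, via the inclusion $I_{n,\delta}(x)\le I_{n,\delta/2}(x_k)$) and boundedness of level sets separately using the hypothesis on $\bigcup_n F_n(\{c_n\le r\})$.

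The paper takes a slightly different, more compact route: it unrolls the definition of $I^{(n)}(x)\le\tau$ into a string of quantifiers and reads off directly that
\[
\{I^{(n)}\le\tau\}=\bigcap_{\eps>0}\overline{\bigcup_{m\ge n}F_m(\{c_m\le\tau+\eps\})},
\]
which is visibly an intersection of closed bounded sets, hence compact; then $I_+=\sup_n I^{(n)}$ inherits the property. This set identity packages your two separate checks (closedness and boundedness) into one formula. The gain is brevity; the cost is that one must stare at the quantifier unwinding to be convinced the identity is right. Your approach is more pedestrian but each step is transparent, and the monotonicity in $\delta$ that you isolate at the outset is exactly what makes the set identity in the paper hold.
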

\begin{proof}
Let $\tau>0$. We can write, for any $x \in\RR$,
\begin{align*}
I^{(n)}(x) \leq \tau & \Longleftrightarrow \forall \delta >0, \inf_{m\geq n } I_{m,\delta}(x) \leq \tau\\
& \Longleftrightarrow  \forall \delta >0, \forall \eps>0, \exists m\geq n,  I_{m,\delta}(x) <\tau +\eps.
\end{align*}
By definition of $I_{m,\delta}$, we get\begin{align*}
I^{(n)}(x) \leq \tau & \Longleftrightarrow   \forall \eps>0, \forall \delta >0,   \exists m\geq n,  \exists h\in \RR^m, c_m(h) <\tau +\eps, |F_m(h) - x|<\delta.
\end{align*}
Thus,
$$ \{ I^{(n)} \leq \tau \} = \bigcap_{\eps>0} \overline{ \bigcup_{m\geq n } F_m\big(\{ c_m \leq \tau+\eps\}\big )}.$$ 
This yields that $I^{(n)}$ is a good rate function. As $I_+ = \sup_{n\in\NN} I^{(n)}$, we deduce that $I_+$ is also a good rate function.
\end{proof}
Let us now check that the assumptions of Lemma \ref{lsci} are fulfilled in our setting. We first  show that $X$ is sub-Gaussian in the sense that there exists $C>0$ such that 
 \begin{equation} \label{subG} \forall Y \in \mathcal{H}_n, \ \Lambda^*(Y) \geq \frac{1}{2C^2} \tr (Y^2).\end{equation}
Applying the convex concentration property defined in \ref{concconv},  to the function $f(X) =\tr (XH)$ with $H \in\mathcal{H}_n$ fixed, we obtain that for any $t>0$,
$$ \PP\big( | \tr (XH)|>t \big) \leq c^{-1}e^{- \frac{c t^2}{\tr (H^2)}}.$$
We deduce that $\Lambda(H) \leq \frac{C^2}{2} \tr (H^2)$, which gives \eqref{subG}.
But, using the fact that $x\in \RR_+ \mapsto x^{2/d}$ is sub-additive, we have for any $Y\in\mathcal{H}_n$, $|\tr( Y^d)| \leq \tr( |Y|^d)\leq ( \tr (Y^2))^{d/2}$, so that whenever 
$$\Lambda^*(Y) \leq r n^{1+\frac{2}{d}},$$
for some $r>0$, then we have $| \frac{1}{n} \tr(Y/\sqrt{n})^d | \leq (2C^2r)^{\frac{d}{2}}$. This shows that we can apply Lemma \ref{lsci} with $c_n(Y) = n^{-(1+\frac{2}{d})}\Lambda^*(Y)$ and $F_n(Y) = \mu_{sc}(x^d) + \frac{1}{n}\tr(Y/\sqrt{n})^d$, and conclude that $I_+$ is a good rate function.

\textbf{Upper bound. }We can now proceed with the proof of the  upper bound. 
By \cite[Theorem 4.2.13]{DZ}, it is sufficient to prove the large deviations upper bound for the sequence $(f_k(X))_{n\in \NN}$, as $(\mu_{sc}(x^d)+ f_k(X))_{n\in \NN}$ is exponentially equivalent to $( \frac{1}{n} \tr(X/\sqrt{n})^d)_{n\in \NN}$ by Lemma \ref{equivexpo}.  We will first make sure that the rate function we are going to obtain by applying Theorem \ref{NL} is the same as the one we are aiming for, that is:

\begin{Lem}\label{ratefunctruc}

Assume that $k$ goes to  $+\infty$ with $n$. For any $n\in \NN$, $\delta >0$, set
$$ \forall  x \in \RR, \ J_{n,\delta}(x) = \inf \big\{ \frac{1}{n^{1+\frac{2}{d}}} \Lambda^*(Y) : |f_k(Y)-x|<\delta , Y \in \mathcal{H}_n\big\},$$
and let
$$ J_+ =  \sup_{\delta>0} \liminf_{n\to +\infty} J_{n,\delta},$$
where $f_k$ is defined in \eqref{trunctr}. 
For any $x \in \RR$,
$$ I_+(x+\mu_{sc}(x^d)) = \sup_{\delta>0}  \liminf_{n\to +\infty} J_{n,\delta}(x),$$
where $I_+$ is defined as
$$\forall x \in \RR, \  I_+(x) =\sup_{\delta>0}\liminf_{n\to +\infty} I_{n,\delta}(x),$$
and for any $n\in \NN$, $\delta>0$,
$$ I_{n,\delta}(x) = \inf \Big\{ \frac{\Lambda^*(Y)}{n^{1+\frac{2}{d}}} : |\frac{1}{n} \tr (Y/\sqrt{n})^d - \mu_{sc}(x^d) - x| <\delta, Y \in \mathcal{H}_n \Big\}.$$ 
\end{Lem}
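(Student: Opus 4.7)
Unfolding the definition of $I_{n,\delta}$ at the point $x+\mu_{sc}(x^d)$ collapses the shift and gives
$$ I_{n,\delta}(x+\mu_{sc}(x^d)) = \inf\Big\{\frac{\Lambda^*(Y)}{n^{1+2/d}} : \Big|\frac{1}{n}\tr(Y/\sqrt{n})^d - x\Big| < \delta\Big\},$$
so the two variational quantities compared in the lemma differ only by replacing $\frac{1}{n}\tr(Y/\sqrt{n})^d$ by $f_k(Y)$ in the constraint. The whole task therefore reduces to showing that on any level set $L_{n,r} := \{Y \in \mathcal{H}_n : \Lambda^*(Y) \leq r n^{1+2/d}\}$ the two functionals agree up to a uniform vanishing error.

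The key deterministic estimate I would prove is: for every $r>0$ and every $Y \in L_{n,r}$, $|\frac{1}{n}\tr(Y/\sqrt{n})^d - f_k(Y)| \leq \eta_n(r)$ with $\eta_n(r) \to 0$. For even $d$, the difference equals $\frac{1}{n}\sum_{i\geq k+1}|\lambda_i^*(Y/\sqrt{n})|^d$ where $\lambda_i^*$ are sorted in non-increasing absolute value. The interpolation $|\lambda_i^*|^d \leq |\lambda_{k+1}^*|^{d-2}\lambda_i^{*2}$ for $i \geq k+1$, combined with $|\lambda_{k+1}^*|^2 \leq \tr((Y/\sqrt{n})^2)/(k+1)$ (from monotonicity of sorted squares) and the sub-Gaussian control $\tr((Y/\sqrt{n})^2) \leq 2C^2 r n^{2/d}$ coming from \eqref{subG}, yields
$$ \eta_n(r) \leq \frac{(2C^2 r)^{d/2}}{(k+1)^{(d-2)/2}}, $$
which vanishes since the standing choice $n^{1/(d-1)} = o(k)$ forces $k \to \infty$. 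The odd $d$ case follows from the same argument applied separately to the positive and negative spectral parts of $Y$.

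With this estimate in hand the two inequalities defining the claimed equality follow by a short transfer argument. For instance, to prove $I_+(x+\mu_{sc}(x^d)) \leq J_+(x)$, assume $S := J_+(x) < \infty$ (the other case is trivial), fix $\delta > \delta' > 0$, and take $\epsilon>0$ and $r = S+2\epsilon$. Infinitely often we find $Y_n \in L_{n,r}$ with $|f_k(Y_n) - x| < \delta'$ and $\Lambda^*(Y_n)/n^{1+2/d} \leq S+2\epsilon$; the deterministic estimate then gives $|\frac{1}{n}\tr(Y_n/\sqrt{n})^d - x| < \delta' + \eta_n(r) < \delta$ for $n$ large, so $I_{n,\delta}(x+\mu_{sc}(x^d)) \leq S+2\epsilon$ infinitely often. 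Taking $\liminf_n$, sending $\epsilon\to 0$, and finally $\sup_\delta$ gives the desired inequality; the reverse direction is entirely symmetric. The only non-routine step is the deterministic estimate, whose main subtlety is matching the scaling of $k$ with the speed $n^{1+2/d}$, a compatibility already encoded in the hypothesis $n^{1/(d-1)} = o(k)$.
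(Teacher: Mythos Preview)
Your proof is correct and follows essentially the same route as the paper's: both rest on the single deterministic estimate that, on the sublevel set $\{\Lambda^*\le r\,n^{1+2/d}\}$, the tail $\frac{1}{n}\tr(Y/\sqrt n)^d-f_k(Y)$ is $O((k+1)^{1-d/2})$, after which the transfer between $I_{n,\delta}$ and $J_{n,\delta}$ is routine. Your interpolation $|\lambda_i^*|^d\le|\lambda_{k+1}^*|^{d-2}|\lambda_i^*|^2$ is a slightly cleaner variant of the paper's bound $\lambda_\ell\le\ell^{-1/2}O(m^{1/2+1/d})$, but both yield the same $k^{1-d/2}$ rate; note in passing that only the lemma's actual hypothesis $k\to\infty$ is needed for this rate to vanish, so your closing remark about the stronger condition $n^{1/(d-1)}=o(k)$ is superfluous here.
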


\begin{proof}
We will prove that $I_+(.+\mu_{sc}(x^d))$ and $J_+$ have the same level sets. We will first observe that if $Y\in\mathcal{H}_m$ is a non-negative matrix such that $\tr (Y^2) = O(m^{1+\frac{2}{d}})$ then,
\begin{equation} \label{obs} \tr (Y^d) - \tr_{[k]} (Y^d) = o( m^{1+\frac{d}{2}}).\end{equation}
Let $\lambda_1\geq\ldots\geq \lambda_m\geq 0$ be the eigenvalues of $Y$. From the fact that $\tr (Y^2) = O(m^{1+ \frac{2}{d}})$, we deduce that for any $1 \leq \ell \leq m$,
$$ \lambda_{\ell} \leq \ell^{-\frac{1}{2}}O(m^{\frac{1}{2}+\frac{1}{d}}).$$ 
Thus,
$$ \tr (Y^d) - \tr_{[k]}( Y^d)  = \sum_{\ell>k} \lambda_{\ell}^d = O(m^{1+\frac{d}{2}} k^{1-\frac{d}{2}}),$$
which gives \eqref{obs}.
Let now $\tau>0$ and $x$ such that $J_+(x) \leq \tau$. Let $\delta>0$. For any $n\in \NN$, we have,
$$ \inf_{m\geq n} J_{m,\delta}(x)\leq \tau.$$
Therefore,
$$ \inf_{m\geq n} J_{m,\delta}(x) = \inf_{m\geq n} \inf_{Y\in\mathcal{H}_m} \big \{ \frac{\Lambda^*(Y)}{m^{1+\frac{2}{d}}}  :  |f_k(Y) -x|<\delta, \Lambda^*(Y) \leq 2\tau m^{1+\frac{2}{d}}\big\}$$ 
For any $m\in\NN$ and $Y\in \mathcal{H}_n$ such that $\Lambda^*(Y) \leq 2\tau m^{1+\frac{2}{d}}$, we know by \eqref{subG} that 
$$ \tr(Y^2) \leq 4C^2 \tau m^{1+\frac{2}{d}}.$$
 Applying the observation \eqref{obs} to $Y_+$ and $Y_-$ alternatively, we deduce that $\tr(Y^d) - f_k(Y)=o(m^{1+\frac{d}{2}})$. Therefore, for $n$ large enough, 
$$ \inf_{m\geq n} J_{m,\delta}(x) \geq  \inf_{m\geq n} I_{m,2\delta}(x).$$
Therefore,
$$ \liminf_{n\to +\infty} I_{n,2\delta}(x) \leq \tau.$$
As the above inequality is true for any $\delta>0$, we obtain $I_+(x)\leq \tau$. Inverting the roles of $I_+$ and $J_+$, we get the other inclusion.

\end{proof}

We come back to the proof of the upper bound of Theorem \ref{LDPtrb}. Let $F$ be a closed subset of $\RR$. We want to prove that,
$$ \limsup_{n\to +\infty} \frac{1}{n^{1+\frac{2}{d}}} \log \PP( f_k(X) \in F) \leq - \inf_F J_+,$$
where $J_+$ is defined in Lemma \ref{ratefunctruc}.
We can assume without loss of generality that $\inf_F J_+>0$. Let $r>0$ such that $\inf_F J_+>r$. Put in another way,
$$F\cap \{ J_+\leq r\} = \emptyset.$$
As $J_+$ is a good rate function, we can find a $\delta>0$ such that 
$$ F \cap V_{2\delta}(\{J_+ \leq r\}) = \emptyset.$$ Define for any $n\in \NN$,
$$ J^{(n)} = \sup_{\delta>0} \inf_{m\geq n } J_{m,\delta}.$$ 
Note that as $J_+ = \sup_{n\in \NN} J^{(n)}$,
$$\{ J_+ \leq r\} = \bigcap_{n\in \NN} \{ J^{(n)} \leq r \}.$$
Using \eqref{subG} and the fact that for any $Y\in \mathcal{H}_n$,
$$ f_k(Y) \leq (\tr (Y^2))^{d/2},$$
we deduce by Lemma \ref{lsci} that $J^{(n)}$ is a good rate function.
 Therefore, $\{ J^{(n)} \leq r\}$ is a non-increasing sequence of compact subsets, so that for $n$ large enough,  
$$\{ J^{(n)} \leq r \} \subset V_{\delta}( \{ J_+ \leq r \}).$$
Therefore, 
$$ F \cap V_{\delta}\big( \{ J^{(n)} \leq r \}\big) =\emptyset.$$
But, $\{ J^{(n)} \leq r \} \supset \{ J \leq rn^{1+\frac{2}{d}} \}$, where
$$ J(x) = \inf \big\{ \Lambda^*(Y) :f_k(Y) = x , Y \in\mathcal{H}_n\big\}.$$ 
Thus,
\begin{equation} \label{reductionup} \PP\big( f_k(X) \in F\big) \leq \PP\big( f_k(X) \notin V_{\delta}( \{ J \leq rn^{1+\frac{2}{d}} \}) \big).\end{equation}
We are now in the position of applying Theorem \ref{NL}. First, we check that the tightness condition \eqref{tightness} is satisfied. By  \cite[Lemma 5.1.14]{DZ}, we know that for any $i,j$,
$$\EE e^{\frac{1}{2} \Lambda_{i,j}^* (X_{i,j}) } \leq 4^{(1+\Car_ {i \neq j})},$$
using the fact that $(\Re X_{i,j}, \Im X_{i,j})$ are independent. Therefore, by Chernoff's inequality,
$$ \PP( \Lambda^*(X)> 8n^2)\leq  e^{-4n^2} 4^{n^2} \leq e^{-n^2},$$
which proves that \eqref{tightness} is fulfilled.

 Let $K = \{\Lambda^* \leq 8 n^2\}$. 
We will now bound the increments of $f_k$ on $K$. As noted before, the level sets of $\Lambda^*$ are included in Hilbert-Schmidt balls, more precisely, we know from \eqref{subG},
\begin{equation} \label{inclu}K \subset 4 C nB_2,\end{equation}
where $B_{2}$ denotes the ball of radius $1$ for the Hilbert-Schmidt norm.
By Lemma \ref{fnsup}, we know that if $f : \RR \to \RR$ is a convex differentiable function such that $f(0)=f'(0)=0$, then the function
\begin{equation*} \forall Y \in \mathcal{H}_n, \ T_f(Y) = \tr_{[k]} f(Y),\end{equation*}
admits the variational representation
$$ \forall X \in \mathcal{H}_n, \ T_f(X) = \sup_{Y \in \mathcal{H}_n \atop \mathrm{rank}(Y)\leq k} \{ \tr f(Y) + \tr \big(f'(Y)(X-Y)\big)\}.$$ 
We deduce that for any $X,Y\in\mathcal{H}_n$,
$$ T_f(X) - T_f(Y) \leq \tr\big(f'(Z)(X-Y)\big),$$
where $Z = \sum_{i=1}^k \lambda_i u_i u_i^*$, with $\lambda_1 \geq \ldots \geq \lambda_n$ the eigenvalues of $X$ and $u_1,\ldots,u_n$ the associated eigenvectors. If we take $f : x \mapsto  x_+^d$, $x\mapsto x_-^d$ or $x\mapsto x^d$ in the case $d$ is even, and $X \in 4C n B_2$, then we see that $f'(Z)$  is of rank $k$ and $|| f'(Z)|| \leq d|| X||^{d-1} \leq d(4C)^{d-1} n^{d-1}$. As by definition, $f_k$ is a combination of at most two functions $T_f$ associated with the functions $x\mapsto x_+^d$, $x\mapsto x_-^d$ or $x\mapsto x^d$, we get that for any $X,Y \in 4C B_2$, 
\begin{equation} \label{difftr}f_k(X) - f_k(Y) \leq \sup_{H \in W} \tr H(X-Y),\end{equation}
where 
\begin{equation} \label{defV} W = \{ H \in \mathcal{H}_n : \rk (H)\leq 2k, ||H||\leq c_0 n^{d-1} \},\end{equation}
where $c_0$ is some positive constant depending on  $C$.

Note that by \eqref{inclu}, the diameter of the level set $K$ is bounded by $8Cn$, and by \eqref{difftr}, the Lipschitz constant of $f_k$ on $K$  is only polynomial in $n$. By Theorem \ref{NL}, we get
\begin{align}
\log\PP\big( f_k(X) \notin V_{\delta}( \{ J \leq rn^{1+\frac{2}{d}} \}) \big) &\leq  - r n^{1+ \frac{2}{d}} \nonumber \\
& \label{upptr}+\log N(8 n c W, \delta B_{2}) + O(\log n),
\end{align}
where $N(8ncW,\delta B_{2})$ denotes the covering number of $8ncW$ by Hilbert-Schmidt balls of radius $\delta$.
It now remains to compute the covering numbers of $W$. This is the object of the following lemma. 
\begin{Lem}\label{complexity}Let $k\in\{1,\ldots,n\}$.
Define the set
$$\mathcal{M}_k = \{ Y \in \mathcal{H}_n : \rk (Y)\leq k, ||Y||\leq 1 \}.$$
For any $\eps\in (0,1)$, let $N(\mathcal{M}_k, \eps B_{2})$ be the covering number of $\mathcal{M}_k$ by Hilbert-Schmidt balls of radius $\eps$. Then, 
$$\log N(\mathcal{M}_k, \eps B_{2}) \leq 2 nk \log \Big(\frac{12k }{\eps}\Big).$$

\end{Lem}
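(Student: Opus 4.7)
The plan is to exploit the spectral decomposition. Every $Y \in \mathcal{M}_k$ can be written as $Y = \sum_{i=1}^k \lambda_i u_i u_i^*$ with $u_1,\ldots,u_k$ orthonormal vectors of the unit sphere and $(\lambda_1,\ldots,\lambda_k) \in [-1,1]^k$. The strategy is therefore to construct nets for the eigenvectors and the eigenvalues separately, and combine them to produce a net for $\mathcal{M}_k$.

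Concretely, I would fix parameters $\eta,\eta'>0$ to be tuned later as $\eta,\eta' \asymp \epsilon/\sqrt{k}$. By the standard volumetric argument, there exist an $\eta$-net $\mathcal{S}$ of the unit sphere of cardinality at most $(3/\eta)^{O(n)}$, and an $\eta'$-net $\mathcal{T}$ of $[-1,1]$ of cardinality at most $3/\eta'$. For each $Y = \sum_i \lambda_i u_i u_i^*$ in $\mathcal{M}_k$, I associate an approximant $Y_0 = \sum_i \lambda_i^0 u_i^0 (u_i^0)^*$ where $u_i^0 \in \mathcal{S}$ lies within distance $\eta$ of $u_i$ and $\lambda_i^0 \in \mathcal{T}$ within distance $\eta'$ of $\lambda_i$.

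Writing $U,U_0$ for the $n\times k$ matrices whose columns are the $u_i$'s and $u_i^0$'s, and $D,D_0$ for the corresponding diagonal matrices, the key estimate follows from the triangle inequality applied to
$$ Y - Y_0 = (U-U_0) D U^* + U_0 (D-D_0) U^* + U_0 D_0 (U-U_0)^*, $$
combined with the submultiplicativity $\|AB\|_2 \leq \|A\|_{op}\|B\|_2$, together with the bounds $\|D\|_{op},\|D_0\|_{op}\leq 1$, $\|U\|_{op}=1$, $\|U_0\|_{op}\leq 1+\sqrt{k}\eta$, $\|U-U_0\|_2 \leq \sqrt{k}\eta$, and $\|D-D_0\|_2\leq \sqrt{k}\eta'$. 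This yields $\|Y-Y_0\|_2 \leq c\sqrt{k}(\eta+\eta')$ for some absolute constant $c$.

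Choosing $\eta,\eta'$ proportional to $\epsilon/\sqrt{k}$ then forces $\|Y-Y_0\|_2\leq \epsilon$, so the collection of approximants is a valid $\epsilon$-net of $\mathcal{M}_k$ in Hilbert-Schmidt norm. A direct count gives $\log N(\mathcal{M}_k,\epsilon B_2)\leq \log(|\mathcal{S}|^k|\mathcal{T}|^k) \leq 2nk\log(12k/\epsilon)$ after tracking the constants and absorbing the diagonal contribution into the dominant eigenvector factor. The sole technical point is calibrating $\eta,\eta'$ carefully enough to meet exactly the claimed numerical constant; no conceptual obstacle arises.
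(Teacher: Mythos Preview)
Your approach is correct and essentially identical to the paper's: both parametrize $\mathcal{M}_k$ via the spectral decomposition and build the net by discretizing the eigenvalues in $[-1,1]$ and the eigenvectors on the unit sphere separately. The only cosmetic difference is that the paper bounds $\|Y-Z\|_2$ term-by-term as $\sum_i |\lambda_i-\mu_i| + \sum_i |\lambda_i|\,\|u_iu_i^*-v_iv_i^*\|_2 \le k\max_i|\lambda_i-\mu_i|+2k\max_i\|u_i-v_i\|_2$, leading to nets of mesh $\eps/k$, whereas your block-matrix telescoping gives the slightly sharper $\sqrt{k}$ dependence; this gain disappears inside the logarithm and the final bound is the same.
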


\begin{proof}
Let $Y \in \mathcal{M}_k$ and $Z \in \mathcal{H}_n$. Let us spectrally decompose $Y$, and write $Z$ as,
$$ Y = \sum_{i=1}^k \lambda_i u_i u_i^*,\quad Z = \sum_{i=1}^k \mu_i v_i v_i^*,$$
where $\mu_i$ are real numbers, $v_i$ are unit vectors (not necessarily orthogonal to one another),  $\lambda_1,\ldots,\lambda_k$ are the possible non-zero eigenvalues of $Y$ and $u_1,\ldots,u_k$ the associated eigenvectors. As the $v_i$'s are unit vectors,
$$||Y-Z||_2 \leq \sum_{i=1}^k |\lambda_i- \mu_i| + \sum_{i=1}^k |\lambda_i|. || u_i u_i^* - v_iv_i^*||_2.$$
Since $||Y|| \leq 1$ and $|| uv^*||_2 =||u||_2 ||v||_2$ for any $u,v \in \CC^n$, we get,
\begin{equation}\label{ineqcomplex}  ||Y-Z|| \leq k\max_{1\leq i \leq k} |\lambda_i - \mu_i| + 2k \max_{1\leq i \leq k} || u_i -v_i||_2.\end{equation}
Let $\mathcal{E}$ be a $\eps/2k$-net of $[-1,1]$ and $\mathcal{F}$ a $\eps/4k$-net of the unit sphere $\mathbb{S}^{n-1}$. Then, the set
$$ \mathcal{D}_{\eps} = \big\{ \sum_{i=1}^k \mu_i v_i v_i^* : \mu_i \in \mathcal{E}, \ u_i \in\mathcal{F} \big\},$$
is an $\eps$-net of $\mathcal{M}_k$ due to the inequality \eqref{ineqcomplex}. Clearly, we can find a net $\mathcal{E}$ such that $|\mathcal{E}| \leq 2k/\eps$. Moreover, by \cite[Lemma 1.4.2]{Lamabook}, there exists a net $\mathcal{F}$ such that,
$$ |\mathcal{F}| \leq \Big( \frac{12 k }{\eps}\Big)^n.$$
From the construction of $\mathcal{D}_{\eps}$ we get finally, 
\[\log |\mathcal{D}_{\eps}| \leq 2 nk \log \Big( \frac{12 k}{\eps} \Big).\]
\end{proof}
Coming back to the inequality \eqref{upptr}, we see that we have the inclusion $8CnW \subset  n^{b} \mathcal{M}_{2k}$, for some $b>0$. Thus, by Lemma \ref{complexity}, 
$$ \log N(8C n W, \delta B_{\ell^2}) \leq N( \mathcal{M}_{2k}, \delta n^{-b} B_{\ell^2}) =O\Big(nk \log \Big( \frac{n}{\delta}\Big)\Big).$$
As we chose $k\log n=o(n^{\frac{2}{d}})$, we get
$$\limsup_{n\to +\infty} \frac{1}{n^{1+\frac{2}{d}}} \log\PP\big( f_k(X) \notin V_{\delta}( \{ J \leq rn^{1+\frac{2}{d}} \}) \big) \leq  - r,$$
which implies, because of \eqref{reductionup},
$$\limsup_{n\to +\infty} \frac{1}{n^{1+\frac{2}{d}}} \log\PP\big( f_k(X) \in F\big) \leq -r.$$
As this inequality is true for any $r<\inf_F J_+$, we obtain the large deviation upper bound of Theorem \ref{LDPtrb}.

\subsection{Large deviation lower bound}
In this section we will use Lemma \ref{borneinfdet} to prove the large deviation lower bound for $(\frac{1}{n} \tr(X/\sqrt{n})^d)_{n\in\NN}$. One of the difficulty in the derivation of the lower bound is that the domain of $\Lambda^*$ may be different from $\mathcal{H}_n$. To bypass this problem, we will add a small Gaussian noise, as in the proof of Cramer's theorem (see \cite[Proof of Theorem 2.2.30]{DZ}). 

\subsubsection{Regularization by Gaussian noise}Recall that we denote by $\mathcal{H}_n^{(\beta)}$ the set of Hermitian matrices when $\beta=2 $, and symmetric matrices when $\beta=1$, of size $n$.
We say that $\Gamma$ belongs to the \textit{Gaussian orthogonal ensemble} (GOE) when $\beta=1$, or to the \textit{Gaussian unitary ensemble} (GUE) when $\beta =2$, if it distributed according to the law,
$$ \frac{1}{Z_n^{(\beta)} }e^{-\frac{\beta}{4} \tr H^2 } d \ell_n^{(\beta)}(H),$$
where $\ell_n^{(\beta)}$ is the Lebesgue measure on $\mathcal{H}_n^{(\beta)}$. 
\begin{Lem}Let $X$ be a Wigner matrix satisfying the convex concentration property \ref{concconv}. 
Let $\Gamma$ be a GOE matrix if $\beta=1$ or a GUE matrix if $\beta=2$. For any $t>0$, 
$$ \lim_{\eps \to 0 } \limsup_{n\to +\infty} \frac{1}{n^{1+\frac{2}{d}}} \log \PP\big( \big|  \tr (X/\sqrt{n})^d - \tr\big((X+\eps \Gamma)/\sqrt{n}\big)^d \big|>t n \big) = - \infty.$$
\end{Lem}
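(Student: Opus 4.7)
The plan is to reduce the claim to a deterministic bound on the difference of traces that depends only on the $d$-Schatten norms of $\tilde X = X/\sqrt n$ and $\tilde \Gamma = \Gamma/\sqrt n$, and then to invoke the convex concentration property of $X$ together with Gaussian concentration for $\Gamma$ to control these norms at the correct speed $n^{1+2/d}$.

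First, using cyclic invariance of the trace and the fundamental theorem of calculus applied to $s \mapsto \tr((\tilde X + s\eps\tilde \Gamma)^d)$, one writes
\[
\tr\big((\tilde X + \eps \tilde \Gamma)^d\big) - \tr(\tilde X^d) = d\eps \int_0^1 \tr\big((\tilde X + s\eps \tilde\Gamma)^{d-1}\tilde\Gamma\big)\,ds.
\]
Hölder's inequality for Schatten norms with conjugate exponents $d/(d-1)$ and $d$ gives $|\tr(M^{d-1}N)| \leq \|M\|_d^{d-1}\|N\|_d$, so by the triangle inequality for $\|\cdot\|_d$,
\[
\big|\tr\big((\tilde X + \eps\tilde\Gamma)^d\big) - \tr(\tilde X^d)\big| \leq d\eps \big(\|\tilde X\|_d + \eps\|\tilde \Gamma\|_d\big)^{d-1} \|\tilde \Gamma\|_d.
\]

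Next I would control the tails of $\|\tilde X\|_d$ and $\|\tilde\Gamma\|_d$. For Hermitian matrices and $d\geq 2$ one has $\|A\|_d \leq \|A\|_2$, so $A \mapsto \|A\|_d$ is a convex, $1$-Lipschitz function with respect to the Hilbert--Schmidt norm. The convex concentration assumption (Definition \ref{concconv}) then yields $\PP(|\|X\|_d - \EE\|X\|_d|>u) \leq c^{-1}\exp(-cu^2)$, and the log-Sobolev inequality satisfied by the GOE/GUE (Remark \ref{remconc}) gives the same kind of bound for $\Gamma$. Because $\EE \tr|\tilde X|^d$ and $\EE \tr|\tilde \Gamma|^d$ are of order $n$ by Wigner moment asymptotics (this uses the bounded-moment hypothesis on the entries), Jensen's inequality furnishes $\EE \|\tilde X\|_d, \EE\|\tilde\Gamma\|_d = O(n^{1/d})$.

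Given these ingredients I would fix $R>0$ and choose $M = M(R)$ so that, after rewriting the concentration estimates in terms of $\tilde X = X/\sqrt n$, both $\PP(\|\tilde X\|_d > Mn^{1/d})$ and $\PP(\|\tilde \Gamma\|_d > Mn^{1/d})$ are $O(\exp(-Rn^{1+2/d}))$; this is possible precisely because the $1/\sqrt n$ rescaling converts the tail $\exp(-cu^2)$ applied at $u$ of order $n^{1/d}\sqrt n$ into $\exp(-cn^{1+2/d})$. On the intersection of these two good events, the deterministic bound above becomes at most $d \cdot 2^{d-1}M^d\eps \cdot n$, which is strictly less than $tn$ as soon as $\eps < t/(d\cdot 2^{d-1}M^d)$. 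Consequently for such $\eps$,
\[
\PP\big(|\tr(\tilde X^d) - \tr((\tilde X + \eps\tilde\Gamma)^d)| > tn\big) \leq 3c^{-1}\exp\big(-Rn^{1+2/d}\big),
\]
so $\limsup_n n^{-(1+2/d)}\log \PP(\cdot)\leq -R$, and letting $R\to\infty$ then $\eps \to 0$ yields the claim.

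The main obstacle is picking the right deterministic bound. A naive mean-value estimate via the operator norm would force us to control $\|X\|$ at speed $n^{1+2/d}$, which is stronger than what the hypotheses provide (the operator norm concentrates only at speed $n$), and using the Schatten $2(d-1)$-norm only gives concentration at speed $n^{d/(d-1)} < n^{1+2/d}$ for $d\geq 3$. It is the $d$-Schatten norm, produced naturally by Hölder's inequality on $\tr(M^{d-1}N)$, whose expected value $O(n^{1/d})$ combined with its being $1$-Lipschitz with respect to the Hilbert--Schmidt norm matches the speed $n^{1+2/d}$ exactly.
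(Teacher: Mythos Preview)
Your argument is correct and somewhat more direct than the paper's. Both proofs hinge on the same key observation: the $d$-Schatten norm $\|\cdot\|_d$ is convex and $1$-Lipschitz for the Hilbert--Schmidt norm, with $\EE\|X/\sqrt n\|_d = O(n^{1/d})$, so the convex concentration property (and the log-Sobolev inequality for $\Gamma$) yield exponential tightness of $n^{-1}\tr|X/\sqrt n|^d$ and $n^{-1}\tr|\Gamma/\sqrt n|^d$ at speed $n^{1+2/d}$. From there the routes diverge. The paper restricts to a set $A_\tau$ where both $d$-th absolute moments are bounded, splits into positive and negative parts, passes via uniform continuity of $x\mapsto|x|^{1/d}$ to the $d$-th roots, and then applies Minkowski's inequality together with Lidskii's theorem to dominate the difference by $\mu_{\eps\Gamma/\sqrt n}(|x|^d)$, whose exponential tightness closes the argument. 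You instead control the trace difference in one stroke via the integral representation and the Schatten H\"older inequality $|\tr(M^{d-1}N)|\leq \|M\|_d^{d-1}\|N\|_d$, which immediately reduces the problem to the tails of $\|\tilde X\|_d$ and $\|\tilde\Gamma\|_d$. Your approach avoids the positive/negative decomposition and the appeal to Lidskii's theorem; the paper's approach, on the other hand, isolates $\eps\Gamma$ completely and makes the role of the perturbation more transparent. Both are equally rigorous and of comparable length.
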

\begin{proof}For $\sigma \in \{-1,1\}$ and $Y\in\mathcal{H}_n$, we write $Y_{\sigma} = Y_+$ if $\sigma =+$ and $Y_{\sigma} = Y_-$ if $\sigma = -$.
With this notation, it is sufficient to prove that for $\sigma\in\{-1,1\}$ and any $t>0$,
$$ \lim_{\eps \to 0 } \limsup_{n\to +\infty} \frac{1}{n^{1+\frac{2}{d}}} \log \PP\big( \big|  \tr (X_\sigma/\sqrt{n})^d - \tr\big((X+\eps \Gamma)_\sigma/\sqrt{n}\big)^d \big|>t n \big) = - \infty.$$
Fix $\sigma \in \{-1,1\}$. 
We will first show that for any $\eps\geq 0$, the sequence $(n^{-1}\tr|X+\eps \Gamma|^d)_{n\in \NN}$ is exponentially tight at the scale $n^{1+\frac{2}{d}}$. This will allow us to reduce ourselves to prove that  $n^{-1/d}||(X+\eps\Gamma)_\sigma/\sqrt{n}||_d$ is an exponentially good approximation of $n^{-1/d}||X_\sigma/\sqrt{n}||_d$, where $||\ ||_d$ denotes the $d^{\text{th}}$ Schatten norm,
\begin{equation} \label{defSchatten} || \ ||_d :  Y\in \mathcal{H}_n \mapsto \big( \tr|Y|^d\big)^{1/d}.\end{equation}
Since $|| \ ||_d$ is a convex $1$-Lipschitz function with respect to the Hilbert-Schmidt norm, we obtain by applying the convex concentration property \ref{concconv} that for any $t>0$,
 \begin{equation} \label{tightSchatten} \PP\big (  || X/\sqrt{n} ||_d - \EE || X/\sqrt{n}||_d  > t n^{1/d} \big) \leq c^{-1}e^{- c t^2 n^{1+\frac{2}{d}}}.\end{equation}  
But, using Cauchy-Schwarz inequality and Jensen's inequality we have,
$$  \EE\big( n^{-1/d} || X/\sqrt{n}||_d \big) \leq \EE \big( \frac{1}{n} \tr (X/\sqrt{n})^{2d} \big)^{\frac{1}{2d}}\big) \leq  \big( \EE \frac{1}{n} \tr (X/\sqrt{n})^{2d} \big)^{\frac{1}{2d}}\big).$$
By Wigner's theorem \cite[Lemma 2.1.6]{AGZ}, we get that $  \EE || X/\sqrt{n}||_d  =O(n^{1/d})$. Therefore, we can deduce from \eqref{tightSchatten} that $(\frac{1}{n}\tr|X / \sqrt{n}|^d)_{n\in \NN}$ is exponentially tight at the scale $n^{1+\frac{2}{d}}$. Using further the triangle inequality for the $d^{\text{th}}$ Schatten norm, and the fact that $\Gamma$ also satisfies the convex concentration property by \cite[Theorem 5.2, 4.3]{Ledouxmono}, we deduce that for any $\eps\geq 0$, $(\frac{1}{n}\tr|(X+\eps \Gamma)/\sqrt{n}|^d)_{n\in \NN}$ is exponentially tight at the scale $n^{1+\frac{2}{d}}$. Therefore, it suffices to show that for arbitrary large but fixed $\tau>0$ and any $t>0$,
$$ \lim_{\eps \to 0 } \limsup_{n\to +\infty} \frac{1}{n^{1+\frac{2}{d}}} \log \PP_{A_\tau}\big( \big|  \mu_{X/\sqrt{n}}(x^d_\sigma)- \mu_{(X+\eps \Gamma)/\sqrt{n}}(x^d_\sigma) \big|>t \big) = - \infty,$$
where $\PP_{A_\tau}$ denotes the measure $\PP( . \cap A_\tau)$, and $A_\tau$ is the event,
$$ A_\tau = \big\{ \mu_{X/\sqrt{n}}(|x|^d) \leq \tau, \ \mu_{(X+\eps \Gamma)/\sqrt{n}}(|x|^d) \leq \tau \big\}.$$
Using the uniform continuity of $x\mapsto |x|^{1/d}$ on compact sets, we deduce that it is enough to show for any $\delta>0$,
$$ \lim_{\eps \to 0 } \limsup_{n\to +\infty} \frac{1}{n^{1+\frac{2}{d}}} \log \PP\big( \big|\big(\mu_{X/\sqrt{n}}(x^d_\sigma)\big)^{\frac{1}{d}}- \big(\mu_{(X+\eps \Gamma)/\sqrt{n}}(x^d_\sigma)\big)^{\frac{1}{d}}\big|>\delta \big) = - \infty.$$
But, by Minkowski's inequality, 
$$ \big|\big(\mu_{X/\sqrt{n}}(x^d_\sigma)\big)^{\frac{1}{d}}- \big(\mu_{(X+\eps \Gamma)/\sqrt{n}}(x^d_\sigma)\big)^{\frac{1}{d}} \big| \leq \Big(\frac{1}{n} \sum_{i=1}^n \big|(\lambda_i)_\sigma - (\mu_i)_\sigma \big|^d \Big)^{\frac{1}{d}},$$
where $\lambda_1\geq\ldots\geq \lambda_n$ and $\mu_1\geq \ldots\geq \mu_n$ are the eigenvalues of respectively  $X/\sqrt{n}$ and $(X+\eps \Gamma)/\sqrt{n}$. But,
$$\sum_{i=1}^n \big|(\lambda_i)_\sigma - (\mu_i)_\sigma \big|^d  \leq  \sum_{i=1}^n \big|\lambda_i - \mu_i \big|^d,$$
and  by Lidskii's theorem  (see \cite[Theorem III.4.1]{Bhatia}),
$$ \frac{1}{n}\sum_{i=1}^n \big|\lambda_i - \mu_i \big|^d \leq \mu_{\eps \Gamma/\sqrt{n}}(|x|^d).$$
Thus it actually suffices to prove that for any $\delta > 0$,
$$ \lim_{\eps \to 0 } \limsup_{n\to +\infty} \frac{1}{n^{1+\frac{2}{d}}} \log \PP\big( \mu_{\eps \Gamma/\sqrt{n}}(|x|^d) >\delta \big) = - \infty,$$
But $(\frac{1}{n}\tr|\Gamma/\sqrt{n}|^d)$ is exponentially tight at the scale $n^{1+\frac{2}{d}}$, therefore we get finally the claim.
\end{proof}
\subsubsection{A tilting strategy}
By \cite[Theorem 4.2.16]{DZ}, it is sufficient  to prove the large deviations lower bound for the sequence $(\frac{1}{n}\tr ((X+\eps \Gamma)/\sqrt{n})^d)_{n\in\NN}$ for any $\eps>0$ small enough. We denote by $\Lambda^*_\eps$ the Legendre transform of the logarithmic Laplace transform $\Lambda_\eps$ of $X+\eps \Gamma$, that is,
$$\forall H \in \mathcal{H}_n^{(\beta)}, \ \Lambda_\eps(H) = \log \EE e^{\tr [(X+\eps \Gamma)]} = \Lambda(H) + \frac{\eps^2 \tr H^2}{\beta}.$$
Adding this small Gaussian noise yields that the domain of $\Lambda^*_{\eps}$ is $\mathcal{H}_n^{(\beta)}$. We note also for future record that, 
\begin{equation} \label{lowb}\forall Y \in \mathcal{H}_n^{(\beta)}, \  \Lambda^*_{\eps}(Y) \leq \Lambda^*(Y).\end{equation}
Let us now proceed with the proof of the large deviation lower bound. Denote by $Z = X + \eps \Gamma$. Let $x \in \RR$, such that $I_-(x)<+\infty$. This means, from the definition of $I_-$ in Theorem \ref{LDPtrb}, that we can find a sequence $Y \in \mathcal{H}_n^{(\beta)}$ such that,
\begin{equation} \label{conv} \lim_{n\to +\infty} \mu_{sc}(x^d) + \frac{1}{n} \tr(Y/\sqrt{n})^d \underset{n\to +\infty}{\longrightarrow} x, \quad \lim_{n\to +\infty} \frac{\Lambda^*(Y)}{n^{1+\frac{2}{d}}} = I_-(x).\end{equation}
Let $\delta>0$. For $n$ large enough, we have
$$ \PP\Big( \frac{1}{n}\tr(Z /\sqrt{n})^d \in B(x,2\delta)\Big) \geq \PP\Big( \frac{1}{n} \tr(Z/\sqrt{n})^d \in B(s_n, \delta) \Big),$$
where $s_n = \mu_{sc}(x^d) + \frac{1}{n} \tr(Y/\sqrt{n})^d$. Let $E$ denote the event,
$$ E = \big\{A \in \mathcal{H}_n^{(\beta)} : \frac{1}{n}\tr(A/\sqrt{n})^d \in B(s_n,\delta) \big\}.$$
Using  Lemma \ref{borneinfdet} and \eqref{lowb}, we have,
$$ \PP( Z  \in E) \geq e^{-\Lambda^*(Y)} \PP_{Y}(Z \in E) \exp\Big( - \frac{1}{\PP_{Y}(Z\in E)^{1/2}} \langle H, \nabla^2 \Lambda_{\eps}(H).H\rangle^{1/2}\Big),$$
where $H = \nabla \Lambda^*_{\eps}(Y)$, and under $\PP_{Y}$, $Z$ follows the tilted law,
$$ e^{\tr (HZ) - \Lambda_{\eps}(H)} d\PP(Z).$$
To obtain the lower bound, we will prove on one hand,
\begin{equation} \label{claim1}\langle H, \nabla^2 \Lambda_{\eps}(H).H\rangle = O(\Lambda^*(Y)).\end{equation}
and on the other hand,
that 
\begin{equation} \label{claim2} \PP_{Y}(Z\in V) \underset{n\to+\infty}{\longrightarrow} 1,\end{equation}
Provided these two claims hold, we get, as $\Lambda^*(Y) = O(n^{1+\frac{2}{d}})$,
$$ \liminf_{n\to +\infty} \frac{1}{n^{1+\frac{2}{d}}} \log \PP\Big( \frac{1}{n}\tr(Z/\sqrt{n})^d \in B(x,\delta)\Big) \geq - I_-(x),$$
which gives the lower bound of Theorem \ref{LDPtrb}.

Let us now prove \eqref{claim1} and \eqref{claim2}. For the first claim, note that for any $A\in\mathcal{H}_n^{(\beta)}$, 
\begin{equation}\label{secondderiv}\nabla^2\Lambda_\eps(A)  = \nabla^2 \Lambda(A) + \frac{2\eps^2}{\beta}Id,\end{equation}
and for any $H \in\mathcal{H}_n^{(\beta)}$, 
$$\tr \big( H \nabla^2 \Lambda(A). H\big)  = \sum_{i=1}^n\frac{\partial^2 \Lambda_{1,1}(A_{i,i})}{\partial A_{1,1}^2} H_{i,i}^2 +  \sum_{i\neq j}\langle H_{i,j},\nabla^2 \Lambda_{1,2}(A_{i,j}). H_{i,j}\rangle,$$
where $\langle . , .\rangle$ denotes the scalar product in $\CC^2$. As we assumed that the second derivatives of $\Lambda_{1,1}$ and $\Lambda_{1,2}$ are bounded, we deduce that there exists $r>0$ such that for any $A \in \mathcal{H}_n^{(\beta)}$, $\nabla^2 \Lambda_\eps (A) \leq r$, for the matrix order. 
Thus, it is sufficient to prove that for any $Y\in \mathcal{H}_n^{(\beta)}$,
\begin{equation} \label{compKH} \tr H^2 =O( \Lambda^*(Y)),\end{equation}
where $H=\nabla \Lambda_\eps^*(Y)$. From \eqref{secondderiv} we have in particular $\nabla^2 \Lambda_\eps(A)\geq \frac{2\eps^2}{\beta}$ for the matrix order. Therefore, denoting by $\Lambda_{\eps, (i,j)}$ the logarithmic Laplace transform of $X_{i,j}+\eps \Gamma_{i,j}$ for any $i,j \in \{1,\ldots,n\}$, and integrating the inequalities,
$$\forall A \in \mathcal{H}_n^{(\beta)}, \  \forall i<j, \ \Lambda_{\eps, (i,i)}''(A_{i,i}) \geq \frac{2\eps^2}{\beta}, \ \nabla^2\Lambda_{\eps,(i,j)}(A_{i,j}) \geq \frac{2\eps^2}{\beta},$$
(where the last inequality is meant in the matrix order when $\beta=2$), knowing that $\nabla \Lambda_\eps(0)=0$, we obtain
$$ || \nabla \Lambda_\eps(H) ||_2 \geq \eps^2 || H||_2.$$
From \eqref{subG}, we deduce that,
$$  \Lambda^*(\nabla \Lambda_\eps(H)) \geq \frac{1}{2C^2} \tr \big(\nabla \Lambda_\eps(H)\big)^2\geq \frac{\eps^4}{2C^2} \tr H^2,$$  
which proves the estimate \eqref{compKH} since by definition $\nabla \Lambda_\eps(H) = Y$.

For the second claim \eqref{claim2}, we observe that under $\PP_{Y}$, $Z$ is a random Hermitian matrix with independent entries up to the symmetry with mean $\nabla \Lambda_\eps(H) = Y$. Moreover, if $\sigma_{i,j}^2 = \EE_H| Z_{i,j} - \EE Z_{i,j}|^2$ is the variance of the $(i,j)^{\text{th}}$ coordinate for $i<j$, then
\begin{align*}
|1+\eps^2-\sigma_{i,j}^2|& =  | \tr \nabla^2\Lambda_{\eps,(i,j)}(0) - \tr \nabla^2\Lambda_{\eps,(i,j)}(H_{i,j})|\\
& = | \tr \nabla^2\Lambda_{i,j}(0) - \tr \nabla^2\Lambda_{i,j}(H_{i,j})|,\end{align*}
and
$$ \sigma_{i,i}^2 = \Lambda_{i,i}''(H_{i,i}) + \frac{2\eps^2}{\beta}.$$
As we assumed the second and third derivatives of $\Lambda_{i,j}$ to be uniformly bounded, we deduce that $|1+\eps^2-\sigma_{i,j}^2| =  O( |H_{i,j}|)$, and $\sigma_{i,i}^2 =O(1)$. Thus, using the fact $\sqrt{x+y} \leq \sqrt{x}+\sqrt{y}$ for $x,y\geq 0$, we get 
\begin{equation} \label{controlvar} \sum_{i,j} (\sqrt{1+\eps^2}-\sigma_{i,j})^2 \leq \sum_{i,j} |1+\eps^2-\sigma_{i,j}^2|\leq O\big( n (|| H||_2\vee 1)\big),\end{equation}
where we further used Cauchy-Schwarz inequality. Therefore, from \eqref{compKH}, we get
$$\sum_{i,j} (\sqrt{1+\eps^2}-\sigma_{i,j})^2 = O\big( n\big( \Lambda^*(Y)^{1/2}\vee 1\big)\big) .$$
But $\Lambda^*(Y) = O(n^{1+\frac{2}{d}})$ by \eqref{conv}. Thus, we finally obtain,
$$\sum_{i,j} (\sqrt{1+\eps^2}-\sigma_{i,j})^2 = O( n^{\frac{3}{2} + \frac{1}{d}}) = o(n^2).$$
Thus, to show the claim  \eqref{claim2}, it remains to prove the following lemma, which is where the assumptions we made on the boundedness of the derivatives of $\Lambda$ play further their roles.

\begin{Lem}Let $d \geq 3$ and $\ell$ even such that $\ell> d$.
Let $X$ be a random Hermitian matrix such that $(X_{i,j})_{i\leq j}$ are independent. Assume that the $\ell^{\text{th}}$ moments of $X_{i,j}$ are uniformly bounded, 
$$ \tr (\EE X)^2 = O( n^{1+\frac{2}{d}}), \text{ and } \sum_{i,j} (1-\sigma_{i,j})^2 =o(n^2),$$
where $\sigma_{i,j}^2 = \EE |X_{i,j}-\EE X_{i,j}|^2$.
Then,
$$ \big| \frac{1}{n} \tr(X/\sqrt{n})^d - \mu_{sc}(x^d) - \frac{1}{n}\tr(\EE X/\sqrt{n})^d\big| \underset{n\to +\infty}{\longrightarrow} 0,$$
in probability.
\end{Lem}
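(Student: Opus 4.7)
Approach: method of moments combined with Chebyshev's inequality. Write $M := \EE X$ and $\xi := X - M$, so that $\xi$ is Hermitian with independent centered entries (up to symmetry), variances $\sigma_{i,j}^2$, and uniformly bounded $\ell$-th moments. Expanding multinomially,
\[
\tr(X^d) = \sum_{\epsilon \in \{M,\xi\}^d} T_\epsilon, \qquad T_\epsilon := \tr(\epsilon_1 \cdots \epsilon_d),
\]
I will treat the $2^d$ terms in three classes. The pure-$M$ summand is exactly $\tr(M^d)$, matching the subtracted quantity. The pure-$\xi$ summand is handled by Wigner's method of moments for generalized Wigner matrices: the hypothesis $\sum(1-\sigma_{i,j})^2 = o(n^2)$ combined with the uniform boundedness of $\sigma_{i,j}^2$ implies $\frac{1}{n^2}\sum_{i,j} \sigma_{i,j}^2 \to 1$, which is precisely what is needed for the standard closed-walk count on the $d$-cycle to yield the semicircle moment $\mu_{sc}(x^d)$ in expectation. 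The bounded $\ell$-th moments with $\ell > d$, after a standard truncation at level $n^{1/\ell}$ if necessary, control the variance and give the convergence in probability $\frac{1}{n^{1+d/2}}\tr(\xi^d) \to \mu_{sc}(x^d)$.

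For each of the $2^d - 2$ mixed patterns $\epsilon$, the plan is to apply Chebyshev's inequality after establishing $|\EE T_\epsilon| = o(n^{1+d/2})$ and $\Var(T_\epsilon) = o(n^{d+2})$. Both are sums over closed walks on $\{1,\ldots,n\}$: a single cycle of length $d$ for the expectation, and two concatenated cycles of length $2d$ for the variance. In either sum, the $\xi$-labeled edges must form matched pairs (with equal index endpoints, respecting the Hermitian symmetry) for the $\xi$-entry expectations to be non-vanishing; each such pair collapses a vertex degree of freedom. The surviving $M$-weighted walk sums are then controlled via the Schatten/spectral bounds
\[
\tr(M^{2k}) \leq (\tr M^2)^k = O\bigl(n^{k(1+2/d)}\bigr) \qquad (k \geq 1),
\]
together with the uniform moment bounds $\EE|\xi_{i,j}|^r = O(1)$ for $r \leq \ell$.

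The main obstacle is exactly this walk bookkeeping for the mixed patterns. The delicate point is that $\tr(M^d)$ is generically of the \emph{same} order $n^{1+d/2}$ as $\tr(\xi^d)$, so mixed cross-terms could a priori compete with both pure contributions. The saving mechanism is the hypothesis $\tr(M^2) = O(n^{1+2/d}) = o(n^2)$: each coincidence forced among the $M$-indexed edges of a walk produces a factor of order $n^{2/d}$ in place of a ``free'' factor $n$, and these gains accumulate across any mixed pattern to yield the strict $o(n^{1+d/2})$ and $o(n^{d+2})$ bounds. Verifying this bookkeeping uniformly across all $2^d-2$ mixed patterns is the principal technical task.
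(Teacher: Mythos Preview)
Your approach is genuinely different from the paper's and, while workable in principle, is considerably more laborious. The paper avoids the term-by-term walk bookkeeping entirely: writing $\hat X = X - \EE X$, it invokes a single Schatten--H\"older inequality of the form
\[
\big| \tr X^d - \tr \hat X^d - \tr (\EE X)^d \big| \leq 2^d \max_{1\leq k\leq d-1} \|\hat X\|_{d+1}^{k}\, \|\EE X\|_2^{d-k},
\]
then controls $\|\hat X\|_{\ell}$ (hence $\|\hat X\|_{d+1}$) in probability via Markov on $\EE\tr\hat X^{\ell}=O(n^{1+\ell/2})$, and uses $\|\EE X\|_2 = O(n^{1/2+1/d})$; since $\ell>d$ the worst case $k=1$ already gives $o(n^{1+d/2})$. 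This disposes of all $2^d-2$ mixed patterns in one stroke, with no combinatorics. For the pure-$\hat X$ term the paper again sidesteps moment combinatorics with a variance profile: it writes $\hat X = \Sigma\circ \hat Y$ with $\hat Y$ having unit variances, applies Wigner's theorem to $\hat Y$, and then transfers to $\hat X$ via Hoffman--Wielandt using $\sum_{i,j}(1-\sigma_{i,j})^2=o(n^2)$, finishing with uniform integrability from the $\ell$-th moment bound.

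One genuine imprecision in your outline: you assert that $\frac{1}{n^2}\sum_{i,j}\sigma_{i,j}^2\to 1$ is ``precisely what is needed'' for the closed-walk count to yield the semicircle moments. This is false in general---a nontrivial variance profile with average $1$ can produce a non-semicircle limit. What the moment method actually needs is that the tree contributions $\sum\prod\sigma^2_{\cdot}$ match their constant-variance counterparts, and for that you must use the full hypothesis $\sum_{i,j}(1-\sigma_{i,j})^2=o(n^2)$ (equivalently $\sum(\sigma_{i,j}^2-1)^2=o(n^2)$, using boundedness of the $\sigma_{i,j}$), not merely its averaged consequence. This is fixable, but as stated your reasoning for the pure-$\xi$ term has a gap.
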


\begin{proof}Let $\hat X = X - \EE X$.
From \cite[Lemma 2.1 (9)]{LDPtr}, we know that expanding $\tr (\hat X+\EE X)^d$, we obtain by bounding the mixed terms using H\"older's inequality,
$$\big| \tr X^d - \tr {\hat X}^d - \tr (\EE X)^d \big| \leq 2^d \max_{1\leq k\leq d-1} || \hat X||_{d+1}^{k} || \EE X ||_2^{d-k},$$
where $|| \ ||_m$ denotes the $m^{\text{th}}$ Schatten norm, which is defined in \eqref{defSchatten}.
As $\ell> d$,  we have $ ||\hat X||_{d+1} \leq || \hat X ||_\ell$. Using the assumption  that $\tr (\EE X)^2 = O( n^{1+\frac{2}{d}})$, we get,
$$\big| \tr X^d - \tr {\hat X}^d - \tr (\EE X)^d \big| =  O\Big(  \max_{1\leq k\leq d-1} || \hat X ||_\ell^{k} n^{(\frac{1}{2}+\frac{1}{d})(d-k)}\Big).$$
But as $\ell$ is even,
\begin{equation} \label{bound}\EE \tr {\hat X}^\ell  = O( n^{1+\frac{\ell}{2}}),\end{equation}
by expanding the trace and using the fact that the $\ell^{\text{th}}$ moments of the entries $X$ are uniformly bounded.  By Markov's inequality, we deduce that 
$$ \PP( || \hat X||_\ell \geq (\log n) n^{\frac{1}{\ell}+\frac{1}{2}})\underset{n\to +\infty}{\longrightarrow} 0.$$
 Therefore, 
$$\big| \tr X^d - \tr {\hat X}^d - \tr (\EE X)^d \big| =  O\Big( (\log n)^d \max_{1\leq k\leq d-1} n^{(\frac{1}{2}+ \frac{1}{\ell})k} n^{(\frac{1}{2}+\frac{1}{d})(d-k)}\Big),$$
with probability going to $1$.
As $\ell>d$, the maximum on the right-hand side is achieved at $k=1$, so that
$$\big| \tr X^d - \tr {\hat X}^d - \tr (\EE X)^d \big| =  O\Big((\log n)^d n^{1+\frac{d}{2}}  n^{-(\frac{1}{d} -\frac{1}{\ell})}\Big) = o(n^{1+\frac{d}{2}}),$$
with probability going to $1$. It now remains to show that,
$$ \frac{1}{n} \tr ({\hat X}/\sqrt{n})^d \underset{n\to +\infty}{\longrightarrow}  \mu_{sc}(x^d),$$
in probability.  Let $\Sigma = (\sigma_{i,j})_{i,j}$ and $\hat Y$ such that $\hat X = \Sigma \circ  \hat Y$, where $\circ$ denotes the Hadamard product. From Wigner's theorem, we know that in probability, 
$$ \mu_{\hat Y/\sqrt{n}} \underset{n\to +\infty}{\leadsto} \mu_{sc},$$
weakly.  Denote by $\mathcal{W}_2$ the $L^2$-Wasserstein distance on the space $\mathcal{P}(\RR)$ of probability measures on $\RR$, which is defined by,
$$ \forall \mu,\nu \in \mathcal{P}(\RR), \ \mathcal{W}_2(\mu,\nu) = \inf_{\pi} \int |x-y|^2 d\pi(x,y),$$ 
where the infimum runs over all couplings between $\mu$ and $\nu$. From the assumption on the variance profile of $X$, we have by Hoffman-Weilandt inequality (see \cite[Theorem VI.4.1]{Bhatia}),
$$ \EE \mathcal{W}_2(\mu_{\hat X/\sqrt{n}}, \mu_{\hat Y/\sqrt{n}}) \underset{n\to +\infty}{\longrightarrow} 0.$$
Thus, together with the weak convergence of $\mu_{\hat Y/\sqrt{n}}$ towards the semi-cercle law, we deduce that $\mathcal{W}_2(\mu_{\hat X/\sqrt{n}}, \mu_{sc})$ converges to $0$ in probability. But, from \eqref{bound}, we get by Markov's inequality,
$$ \lim_{\tau \to +\infty} \limsup_{n\to +\infty} \PP\big (\mu_{\hat X/\sqrt{n}} (x^\ell) \geq \tau \big) \underset{n\to +\infty}{\longrightarrow} 0.$$
Therefore, we can integrate the limit and deduce that
$$ \frac{1}{n} \tr ({\hat X}/\sqrt{n})^d = \mu_{sc}(x^d) + o(1),$$
in probability.

\end{proof}

\subsection{Computation of the rate function}\label{comp}
In this section we give a proof of Corollary \ref{sharpsubGtr}. We first prove a general upper bound on the lower bound rate function $I_-$. This bound corresponds to the strategy of increasing (or decreasing) uniformly the mean of the off-diagonal entries.
\begin{Lem}\label{upperboundrate}
Assume $X$ is a Wigner matrix such that in the case $\beta=2$, $(\Re X_{1,2}, \Im X_{1,2})$ are each of variance $1/2$. Then,
$$ \forall x\geq 0, \ \limsup_{n\to +\infty} I_n(x) \leq \frac{\beta}{4} x^{\frac{2}{d}},$$
where
$$ I_n(x) = \inf \big\{ \frac{\Lambda^*(Y)}{n^{1+\frac{2}{d}}} : \frac{1}{n} \tr(Y/\sqrt{n})^d  =x , Y \in\mathcal{H}_n^{(\beta)}\big\}.$$ 
 In particular,
$$  I_-(x) \leq  \begin{cases}
\frac{\beta}{4} \big( x-\mu_{sc}(x^d)\big)^{\frac{2}{d}} & \text{ if $d$ is even and $x \geq \mu_{sc}(x^d)$,}\\
 \frac{\beta}{4} |x|^{\frac{2}{d}}& \text{ if $d$ is odd and $x\in \RR$}.
\end{cases}$$
\end{Lem}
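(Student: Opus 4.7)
The plan is to construct an explicit test matrix $Y$ realizing the infimum that defines $I_n(x)$ up to the claimed quadratic bound. Following the strategy indicated in the lemma statement, I will take $Y := (\alpha/n)(J-I)$, where $J$ denotes the all-ones matrix of size $n$ and $\alpha>0$ is a scalar to be calibrated; this amounts to shifting uniformly the mean of the off-diagonal entries by $\alpha/n$ and leaving the diagonal entries at zero. Since $J-I$ has eigenvalues $n-1$ (once) and $-1$ (with multiplicity $n-1$), one gets
$$\tr(Y^d) = (\alpha/n)^d\bigl[(n-1)^d + (-1)^d(n-1)\bigr] = \alpha^d (1+o(1)),$$
so $\tfrac{1}{n}\tr(Y/\sqrt{n})^d = \alpha^d/n^{1+d/2}\,(1+o(1))$. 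The choice $\alpha = x^{1/d}\, n^{1/2 + 1/d}$ therefore hits the target $x$ in the limit. Crucially, the common value $c_n := \alpha/n = x^{1/d}\,n^{1/d-1/2}$ of the off-diagonal entries tends to $0$, because $d\ge 3$.

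Next I will bound $\Lambda^*(Y)$ using the product structure of the Wigner law. By independence of the entries and the usual additive decomposition of a Legendre transform of a sum,
$$\Lambda^*(Y) = \sum_{i}\Lambda^*_{1,1}(Y_{i,i}) + \sum_{i<j}\Lambda^*_{1,2}(Y_{i,j}) = \binom{n}{2}\,\Lambda^*_{1,2}(c_n),$$
since $Y_{i,i}=0$ and $\Lambda^*_{1,1}(0)=0$ (as $X_{1,1}$ has mean zero). Because $X_{1,2}$ is centered with $\EE |X_{1,2}|^2 = 1$, the function $\Lambda^*_{1,2}$ is twice differentiable at the origin with $\Lambda^*_{1,2}(0)=0$, $\nabla \Lambda^*_{1,2}(0)=0$, and Hessian equal to the inverse covariance $\Sigma^{-1}$ of $X_{1,2}$. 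Taylor expansion then gives $\Lambda^*_{1,2}(c) = \tfrac{1}{2}\langle c,\Sigma^{-1}c\rangle + o(|c|^2)$ as $c\to 0$. For real $c$, this specializes to $\Lambda^*_{1,2}(c) \sim c^2/2$ in the real case ($\beta=1$, $\sigma_{1,2}^2=1$) and to $\Lambda^*_{1,2}(c) \sim c^2$ in the complex case ($\beta=2$, where the real direction has variance $1/2$); in both instances, $\Lambda^*_{1,2}(c) = \tfrac{\beta}{2}c^2(1+o(1))$. Summing and dividing by the speed gives
$$\frac{\Lambda^*(Y)}{n^{1+2/d}} = \frac{\tbinom{n}{2}\cdot \tfrac{\beta}{2}c_n^2}{n^{1+2/d}}(1+o(1)) = \frac{\beta}{4}\,\alpha^2\,n^{-(1+2/d)}(1+o(1)) \longrightarrow \frac{\beta}{4}\,x^{2/d},$$
which proves $\limsup_n I_n(x) \le \tfrac{\beta}{4}x^{2/d}$.

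For the ``in particular'' statement, recall that $I_-(x) = \sup_{\delta>0}\limsup_{n\to\infty} I_{n,\delta}(x)$ with $I_{n,\delta}$ targeting $\tfrac{1}{n}\tr(Y/\sqrt{n})^d = x - \mu_{sc}(x^d)$. When $d$ is even and $x\ge \mu_{sc}(x^d)$, I run the construction above with $x$ replaced by $x' := x - \mu_{sc}(x^d) \ge 0$, and the displayed limit directly bounds $I_-(x)$ by $\tfrac{\beta}{4}(x-\mu_{sc}(x^d))^{2/d}$. When $d$ is odd, $\mu_{sc}(x^d) = 0$ by symmetry of the semi-circle law; for $x\ge 0$ I use $Y$ as above, and for $x<0$ I use $-Y$ instead, which flips the sign of $\tr(Y^d)$ while leaving the leading quadratic Taylor coefficient of $\Lambda^*_{1,2}$ unchanged.

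The only delicate point is justifying the Taylor expansion of $\Lambda^*_{1,2}$ at the origin with matching quadratic coefficient $\tfrac{1}{2}\Sigma^{-1}$; this requires $\Lambda_{1,2}$ to be $C^2$ in a neighborhood of $0$ with non-degenerate Hessian $\Sigma$, which holds as soon as $X_{1,2}$ admits some exponential moment (and in particular under the sharp sub-Gaussian hypothesis used to deduce Corollary~\ref{sharpsubGtr}). Everything else is a straightforward explicit computation on the rank-one perturbation.
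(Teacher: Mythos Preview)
Your approach is essentially identical to the paper's: both take $Y$ to be a scalar multiple of $J-I$ (zero diagonal, constant off-diagonal entry tending to zero like $n^{1/d-1/2}$), use the product structure to write $\Lambda^*(Y)=\binom{n}{2}\Lambda^*_{1,2}(c_n)$, and then Taylor-expand $\Lambda^*_{1,2}$ at the origin using that the Hessian of $\Lambda^*_{1,2}$ is the inverse covariance. The only cosmetic difference is that the paper calibrates the off-diagonal value so that $\tfrac{1}{n}\tr(Y/\sqrt{n})^d=x$ \emph{exactly} for each $n$, whereas your choice $\alpha=x^{1/d}n^{1/2+1/d}$ only achieves this in the limit; since $I_n(x)$ is defined by an equality constraint, you should either solve for $\alpha$ exactly (replacing $\alpha^d$ by $x n^{1+d/2}\cdot n^d/[(n-1)^d+(-1)^d(n-1)]$) or note that the required correction is $1+o(1)$ and does not affect the asymptotic. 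Your handling of the odd-$d$, $x<0$ case via $-Y$ and your remark on the smoothness needed for the Taylor expansion are both welcome clarifications.
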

\begin{proof}Let $x\geq 0$. 
Let  $y = \Big( \frac{x}{(n-1)^d+(n-1)} \Big)^{\frac{1}{d}} n^{\frac{1}{2} + \frac{1}{d}}$, and define \begin{equation*}
Y
= \left(
     \raisebox{0.5\depth}{%
       \xymatrixcolsep{1ex}%
       \xymatrixrowsep{1ex}%
       \xymatrix{
        0 \ar @{.}[dddrrr]& y \ar @{.}[rr] \ar @{.}[ddrr] &   &y \ar @{.}[dd]  \\
         y \ar@{.}[dd] \ar@{.}[ddrr] & & & \\
         &&&y \\
         y \ar@{.}[rr] &  &y& 0
       }%
     }
   \right)\in  \mathcal{H}_n. \end{equation*}
Then, $\tr (Y^d) =xn^{1+\frac{d}{2}}$, and
$$ \Lambda^*(Y) = \frac{n(n-1)}{2} \Lambda^*_{\Re X_{1,2}}(y).$$

But $y \sim n^{\frac{1}{d}-\frac{1}{2}} x^{\frac{1}{d}}$. As $\Re X_{1,2}$ has variance $1/\beta$, we claim that the Taylor expansion of $\Lambda^*$ around $0$ gives $\Lambda^*_{\Re X_{1,2}} (y) = \frac{\beta y^2}{2} + o(y^2)$. Indeed, if we let $ g = \Lambda^*_{\Re X_{1,2}}$, then one can derive from the identity $ (g^*)' \circ g' =Id$ the fact that
$$\forall \lambda \in \RR, \   (g^*)''( g' (\lambda )) = \frac{1}{g''(\lambda)}.$$
Since $\Re X_{1,2}$ is centered and has variance $1/\beta$. We have $g'(0) = 0$ and $g''(0) =1/\beta$. Thus, $(g^*)''(0)=\beta$ and therefore $\Lambda^*_{\Re X_{1,2}} (y) = \frac{\beta y^2}{2} + o(y^2)$ as $y \to0$. Thus,
$$ \Lambda^*(Y) = \frac{n(n-1)}{2}\Big( \frac{\beta}
{2} x^{\frac{2}{d}}n^{\frac{2}{d}-1} + o(n^{\frac{2}{d}-1})\Big) = \big(\frac{\beta}{4} x^{\frac{2}{d}} + o(1)\big) n^{1+\frac{2}{d}},$$
which ends the proof.
\end{proof}

\begin{Rem}
One can produce a second upper bound on $I_-$ which corresponds this time to the strategy of having one very large entry on the diagonal (for example). It reads,
$$ \forall x \geq 0, \ I_-(x+\mu_{sc}(x^d)) \leq \limsup_{n\to +\infty} \frac{\Lambda^*_{1,1}(x^{\frac{1}{d}} n^{\frac{1}{d}+\frac{1}{2}})}{n^{1+\frac{2}{d}}},$$
which is nontrivial if $\Lambda^*_{1,1}$ has a quadratic behavior at infinity. In particular, this bound shows that one can have a rate function very different from the one of the GOE or GUE. Indeed, if $\Lambda_{1,1}(\lambda) \simeq_{+\infty} \frac{B}{2}\lambda^2$, then $\Lambda^*_{1,1}(x)\simeq_{+\infty} \frac{1}{2B}x^2$, so that,
$$ \forall x \geq 0, \ I_-(x+\mu_{sc}(x^d)) \leq \frac{1}{2B}x^{\frac{2}{d}}.$$
This bound is somewhat related to the large deviations of the traces of Wigner matrices with entries having super-Gaussian tails (see \cite{LDPtr} for more details), where a heavy-tail phenomenon arises, in the sense that only the large entries of the matrix are controlling the large deviation of the trace.

\end{Rem}

In the case where the entries of $X$ have sharp sub-Gaussian tails, we show in the following proposition that the upper bound on $I_-$ proved in Lemma \ref{upperboundrate} is also a lower bound on $I_+$. By Theorem \ref{LDPtrb}, this fact immediately implies that a full LDP holds for the traces of this class of Wigner matrices, as announced in Corollary \ref{sharpsubGtr}.

\begin{Pro}
Assume $X$ is a Wigner matrix whose entries have sharp sub-Gaussian tail. Assume further that $\EE X_{1,1}^2 \leq \beta/2$, and  in the case  $\beta=2$ that $(\Re X_{1,2}, \Im X_{1,2})$ are each of variance $1/2$.
Then,
$$ I_+ = I_- = J_d,$$
where $J_d$ is defined in Corollary \ref{sharpsubGtr}, and where for any $x\in\RR$,
$$ I_+(x) =  \sup_{\delta>0}\liminf_{n\to +\infty} I_{n,\delta}(x),$$
$$ I_-(x) =\sup_{\delta>0}\limsup_{n\to +\infty} I_{n,\delta}(x),$$
and for any $n\in \NN$, $\delta>0$,
$$ I_{n,\delta}(x) = \inf \Big\{ \frac{\Lambda^*(Y)}{n^{1+\frac{2}{d}}} : |\frac{1}{n} \tr (Y/\sqrt{n})^d + \mu_{sc}(x^d) - x| <\delta, Y \in \mathcal{H}_n \Big\}.$$ 
\end{Pro}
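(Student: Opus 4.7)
The plan is to combine Lemma \ref{upperboundrate} with a matching lower bound on $I_+$ and the trivial inequality $I_+ \leq I_-$ (valid since $\liminf \leq \limsup$). Since Lemma \ref{upperboundrate} already gives $I_-(x) \leq J_d(x)$ wherever $J_d$ is finite, the remaining work is to establish $I_+(x) \geq J_d(x)$ for all $x \in \RR$.

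The central ingredient will be a quadratic lower bound on $\Lambda^*$ extracted from the sharp sub-Gaussian hypothesis. Unpacking $\Lambda$ entry by entry via the independence of the entries and of $(\Re X_{1,2}, \Im X_{1,2})$, one has, in the complex Hermitian case,
$$ \Lambda(H) = \sum_i \Lambda_{1,1}(H_{i,i}) + \sum_{i<j}\bigl[\Lambda_{\Re X_{1,2}}(2\Re H_{i,j}) + \Lambda_{\Im X_{1,2}}(2\Im H_{i,j})\bigr], $$
with the obvious real analogue when $\beta=1$. Applying the sharp sub-Gaussian bound $\Lambda_\xi(z) \leq \frac{1}{2}\langle z,\Sigma z\rangle$ to each factor, together with $\EE X_{1,1}^2 \leq 2/\beta$ and the prescribed variances of the off-diagonal entries, the factor-of-two bookkeeping collapses cleanly to yield $\Lambda(H) \leq \frac{1}{\beta}\tr(H^2)$. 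Taking Legendre transforms then gives
$$ \Lambda^*(Y) \geq \frac{\beta}{4}\tr(Y^2), \qquad Y \in \mathcal{H}_n^{(\beta)}. $$

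Next, I would combine this with the elementary Schatten norm monotonicity $|\tr(Y^d)|\leq\tr|Y|^d\leq (\tr Y^2)^{d/2}$. Setting $s = \frac{1}{n}\tr(Y/\sqrt{n})^d$, any $Y$ admissible in the definition of $I_{n,\delta}(x)$ satisfies $|s| \geq (|x-\mu_{sc}(x^d)|-\delta)_+$ and hence $\tr(Y^2) \geq (|x-\mu_{sc}(x^d)|-\delta)_+^{2/d} n^{1+2/d}$. Inserting this into the quadratic lower bound yields $I_{n,\delta}(x) \geq \frac{\beta}{4}(|x-\mu_{sc}(x^d)|-\delta)_+^{2/d}$; taking $\liminf_n$ and then $\sup_{\delta>0}$ then gives $I_+(x) \geq \frac{\beta}{4}|x-\mu_{sc}(x^d)|^{2/d}$, which matches $J_d(x)$ wherever the latter is finite (noting that $\mu_{sc}(x^d)=0$ when $d$ is odd).

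Finally, for $d$ even and $x < C_{d/2}$, I would observe that $\tr(Y^d)\geq 0$ forces $\frac{1}{n}\tr(Y/\sqrt n)^d + \mu_{sc}(x^d) \geq C_{d/2}$, so that for any $\delta < C_{d/2}-x$ the infimum defining $I_{n,\delta}(x)$ is over the empty set and equals $+\infty$, giving $I_+(x) = +\infty = J_d(x)$. Chaining $J_d \leq I_+ \leq I_- \leq J_d$ ends the proof. The main technical point throughout is the sharp sub-Gaussian bookkeeping: one must track the factors of $2$ arising from the Hermitian structure (each off-diagonal entry appearing twice in $\tr(XH)$, combined with the $2\Re$, $2\Im$ splitting in the complex case) so that the constant $\beta/4$ in the final rate function is matched \emph{exactly} rather than up to a universal factor.
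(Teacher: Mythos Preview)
Your proposal is correct and follows essentially the same route as the paper: both use the sharp sub-Gaussian hypothesis to obtain $\Lambda(H)\leq \frac{1}{\beta}\tr(H^2)$, hence $\Lambda^*(Y)\geq \frac{\beta}{4}\tr(Y^2)$, combine this with $|\tr(Y^d)|\leq (\tr Y^2)^{d/2}$ to get $I_+\geq J_d$, and then close with Lemma \ref{upperboundrate} and the chain $J_d\leq I_+\leq I_-\leq J_d$. Your write-up simply makes the $\delta$-bookkeeping and the empty-feasible-set argument for $d$ even, $x<C_{d/2}$ more explicit than the paper does.
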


\begin{proof}
Since the entries of $X$ have sharp sub-Gaussian tails, we deduce from our assumptions on the covariance structure that for any $H\in \mathcal{H}_n^{(\beta)}$, $\Lambda(H) \leq (1/\beta) \tr H^2$. Therefore we have for any $Y \in \mathcal{H}_n^{(\beta)}$,
$$ \Lambda^*(Y) \geq \frac{\beta}{4} \tr Y^2.$$
But, $| \tr( Y^d) | \leq \tr(|Y|^d) \leq (\tr (Y^2))^{d/2}$, therefore, 
$$ \Lambda^*(Y) \geq \frac{\beta}{4} |\tr (Y^d) |^{\frac{2}{d}}.$$
Thus, for any $x\in\RR$,
$$ I_{+}(x) \geq  \inf \Big\{ \frac{\beta}{4} |y|^{\frac{2}{d}} : y + \mu_{sc}(x^d) = x ,y \in \RR\Big\}.$$ 
In the case $d$ is even, we see that the function on the right-hand side is infinite if $x <\mu_{\sc}(x^d)$ and otherwise equal to $(\beta/4) (x-\mu_{sc}(x^d))^{2/d}$. If $d$ is odd, then one obtains $ I_+(x) \geq (\beta/4) |x|^{2/d}$. In conclusion, we have the inequality $I_+ \geq J_d$. But, by Lemma \ref{upperboundrate}, $I_- \leq  J_d$, which completes the proof.

\end{proof}

\section{Upper tail of cycle counts in sparse  Erd\H{o}s--R\'enyi graphs}

In this section, we will give a proof of Proposition \ref{cycleup}. As for the traces of Wigner matrices, the main idea is to reduce the complexity by showing that we can replace the full trace by a truncated version, involving only the eigenvalues at the edges of the spectrum.  Then, using standard complexity computations, we apply Proposition \ref{NL} to obtain the desired upper bound.
\subsection{A truncation argument}We recall that for any $Y \in \mathcal{H}_n$, we denote by $\lambda_1(Y),\ldots,\lambda_n(Y)$ its eigenvalues in non-decreasing order and for any $k \in \{1,\ldots,n\}$, we denote by $\tr_{[k]} Y$ the truncated trace:
$$ \tr_{[k]} Y = \sum_{i=1}^k \lambda_i(Y).$$
We define $g_k$ the function,
$$\forall Y \in \mathcal{H}_n, \  g_k(Y) = \begin{cases}
\tr_{[k]} (Y^d )& \text{ if  $d$ is even,}\\
\tr_{[k]} (Y_+^d) - \tr_{[k]} (Y_-^d) & \text{ if  $d$ is odd.}
\end{cases}$$

Building on the concentration inequalities we proved in \S \ref{conc}, we will show the following exponential equivalent.
\begin{Lem}\label{equiER}Denote by $v_n=n^2 p^2 \log (1/p)$. Assume $\log(1/p) = o(np^2)$. Let $k \in \{1,\ldots,n\}$ such that $  \log^4(1/p) = o(k)$, and in the case $d=3$ such that moreover $k =o((np)^{3/2})$. For any $\delta>0$,
$$ \lim_{n\to +\infty} \frac{1}{v_n} \log \PP\big( \tr (X^d) - g_k(X)>\delta p^dn^d\big) = -\infty.$$ 
\end{Lem}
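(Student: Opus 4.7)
The plan is to adapt the proof of Lemma \ref{equivexpo} from the Wigner setting. Since the entries of $X$ lie in $\{0,1\}$, $X$ satisfies the convex concentration property by Talagrand's inequality (see Remark \ref{remconc}), so the tools of Section \ref{conc} are applicable. Two issues new to the Erd\H{o}s--R\'enyi setting arise. First, $\EE\|X\|$ is of order $np$ rather than $\sqrt{np}$ due to the deterministic rank-one contribution $\EE X=p(J-I)$, so the spectrum has a Perron eigenvalue $\lambda_1\sim np$ and a bulk of order $\sqrt{np}$; to avoid the large mean one applies Proposition \ref{probatrou} to the centred matrix $\tilde X=X-\EE X$ (whose expected operator norm is of order $\sqrt{np}$) and transfers the spectral control to $X$ via Cauchy interlacing (a rank-one shift creates at most one extra outlier, namely $\lambda_1$). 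Second, for the odd exponent $d=3$ the naive bulk bound is not $o((np)^3)$ at the threshold $p\sim n^{-1/2}$ and cancellation must be exploited.

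For $d$ even, I would define the truncated power $f_M(x)=x^d$ on $[-M,M]$ with a linear extension beyond; this is convex, even, non-negative and $dM^{d-1}$-Lipschitz, and satisfies $f_M\le x^d$. Choosing $M$ between the bulk edge and $\lambda_1$, on the event $\mathcal{E}_k=\{\mathcal{N}_X([-M,M]^c)\le k\}$ the monotonicity of $f_M$ in $|x|$ yields the identity $\tr(X^d)-g_k(X)=\tr f_M(X)-\tr_{[k]}f_M(X)$. Applying Proposition \ref{conckconv} to each of the two convex Lipschitz functionals gives concentration at scale $dM^{d-1}\sqrt n$, and the resulting deviation probability at scale $\delta p^d n^d$ is $\ll e^{-v_n}$ under the hypothesis $np^2\gg\log^4 n$ for a suitable choice of $M$. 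The expectation on $\mathcal{E}_k$ is at most $nM^d\asymp n(np)^{d/2}=o(p^d n^d)$ for $d\ge 4$, and $\PP(\mathcal{E}_k^c)$ is handled by Proposition \ref{probatrou} applied to $\tilde X$, after a careful joint choice of $M$ and using the hypothesis $k\gg\log^4(1/p)$ to balance the Lipschitz scale against the outlier count. For $d$ odd, the decomposition $g_k=\tr_{[k]}(X_+^d)-\tr_{[k]}(X_-^d)$ permits the same argument using the one-sided convex truncations $f_M^\pm$, and the expectation bound $n(np)^{d/2}=o(p^d n^d)$ is still valid for $d\ge 5$.

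The delicate case is $d=3$. Starting from
\[
\tr(X^3)-g_k(X) \;=\; \bigl(\tr(X^3)-\lambda_1^3\bigr) \;-\; \sum_{i=2}^k\lambda_i^3 \;-\; \sum_{i=n-k+1}^n\lambda_i^3,
\]
the quantity $\tr(X^3)-\lambda_1^3$ is analysed by expanding $X=\tilde X+p(J-I)$: the deterministic leading $(np)^3$ contribution to $\tr(X^3)$ cancels with that of $\lambda_1^3$, leaving a sum of centred polynomial-chaos terms in the entries of $\tilde X$ (the main one being $\tr(\tilde X^3)$, together with lower-order cross terms such as $3p(\|\tilde X\mathbf 1\|^2-\tr\tilde X^2)$), each of which one must show is $o(p^3 n^3)$ with probability $1-e^{-\omega(v_n)}$. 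The remaining two tail sums consist of $2k-1$ bulk eigenvalues each of magnitude $O(\sqrt{np})$ on the good event from Proposition \ref{probatrou}, and so contribute $O(k(np)^{3/2})=o((np)^3)$ \emph{precisely because} of the hypothesis $k=o((np)^{3/2})$, which is where this additional hypothesis is used.

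I expect the main obstacle to be obtaining the exponential tightness at the precise speed $v_n$ for the mean-zero cubic statistic $\tr(\tilde X^3)$: since it is a polynomial of degree three in independent Bernoullis and not a convex Lipschitz function of $X$, the convex-concentration tools used elsewhere do not directly apply, and a dedicated estimate---for instance a Lata\l{}a-type tail bound for Bernoulli chaos, or a bespoke truncation reducing matters to a convex proxy satisfying Proposition \ref{conckconv}---will be required; the remaining steps then combine in the same way as in the Wigner proof of Lemma \ref{equivexpo}.
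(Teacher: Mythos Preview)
Your outline captures the right ingredients (centring by $\tilde X=X-p(J-I)$, Weyl interlacing to transfer spectral control, and the convex truncation $f_M$ combined with Proposition \ref{conckconv}), but there is a genuine gap in the ``careful joint choice of $M$'' step. No single level $M$ can simultaneously make the concentration and the outlier probability beat $e^{-v_n}$ under the sole hypothesis $k\gg\log^4(1/p)$. Indeed, writing $M=\sqrt{np}\,\alpha_n$, Proposition \ref{conckconv} gives a deviation bound of order $\exp\bigl(-c\,n^dp^{d+1}/\alpha_n^{2(d-1)}\bigr)$, which is $\ll e^{-v_n}$ only if $\alpha_n^{2}\ll n^{(d-2)/(d-1)}p/\log^{1/(d-1)}(1/p)$; on the other hand, Proposition \ref{probatrou} gives $\PP(\mathcal{E}_k^c)\le\exp(-cM^2k)$, and forcing this below $e^{-v_n}$ requires $\alpha_n^2\gg np\log(1/p)/k$. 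Compatibility of the two constraints forces $k\gg n^{1/(d-1)}\log^{d/(d-1)}(1/p)$, a polynomial-in-$n$ lower bound that is far stronger than the stated hypothesis. So the argument as written breaks down for every $d\ge 3$.

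The paper closes this gap with a \emph{slicing argument}: one keeps $M=\sqrt{np}\,\alpha_n$ small enough for the concentration step, introduces a second, much larger level $R=\sqrt{v_n/\beta_n}$ with $\log^4(1/p)\ll\beta_n\ll k$ so that $\PP(\mathcal N[R/2,\infty)\ge k)\le e^{-cR^2k}=e^{-\omega(v_n)}$, and then bridges the interval $[M,R/2]$ by geometrically spaced intermediate levels $M_m$ defined by $M_{m+1}^d=R^{d-2}M_m^2$. On each slice the contribution is at most $M_{m+1}^d\,\mathcal N[M_m,\infty)$, and Proposition \ref{probatrou} with the \emph{adaptive} threshold $\delta n^dp^d/(NM_{m+1}^d)$ (rather than the fixed $k$) yields a bound $\exp(-c\,\delta n^dp^d/(NR^{d-2}))$ that is uniform in $m$ and $\ll e^{-v_n}$ thanks to the choice of $\beta_n$. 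This dyadic-type interpolation is the missing idea in your proposal, and it is precisely what makes the weak hypothesis $k\gg\log^4(1/p)$ sufficient.

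A secondary remark: the slicing argument handles the concentration for all $d$ uniformly, so your separate chaos-expansion treatment of $\tr(\tilde X^3)$ is not needed for the deviation part. The extra hypothesis $k=o((np)^{3/2})$ when $d=3$ enters only in bounding the \emph{expectation} $\EE(\tr X^d-g_k(X))$: for $d\ge 4$ one has the crude bound $n\,\EE\|\hat X\|^d=O(n(np)^{d/2})=o(n^dp^d)$, while for $d=3$ this fails and one instead uses interlacing to compare with $\EE\tr(\hat X^3)=O(n^2p^2)$ plus an error $O(k\,\EE\|\hat X\|^3)=O(k(np)^{3/2})$, which is where the restriction on $k$ is used. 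This is simpler than the Lata\l a-type bound you anticipate.
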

\begin{proof}
The proof will be slightly more involved than its counterpart for Wigner matrices which we proved in Lemma \ref{equivexpo}. We will actually need here a ``slicing argument'' as the one used in the proof the large deviations of the moments of $\beta$-ensembles (see \cite[Proposition 3.5]{LDPtr}).
In a first step, we will prove that for any $\delta>0$, 
\begin{equation} \label{firststep} \lim_{n\to +\infty} \frac{1}{v_n} \log \PP\big( Z - \EE Z>\delta p^dn^d\big) = -\infty,\end{equation}
where $Z = \tr X^d - g_k(X)$.
We will only consider the case where $d$ is odd. In order to show \eqref{firststep}, we see that it suffices to prove that for $\sigma \in \{ -,+\}$, and for any $\delta>0$,
\begin{equation} \label{concen} \lim_{n\to +\infty} \frac{1}{v_n} \log \PP\big( \big|\sum_{i=k+1}^n \lambda_i^d - \EE \sum_{i=k+1}^n \lambda_i^d \big|>\delta p^dn^d\big) = -\infty,\end{equation}
where we use $\lambda_i$ as a shorthand for $\lambda_i(X_\sigma)$ for any $i\in\{1,\ldots,n\}$. Fix $\sigma \in \{ -,+\}$.
Let $M>0$, and define $f_M$ the truncated power function, by $f_M(x) = x^d$ for $x\in [0,M]$, and for $x>M$,
$$f_M(x) =
dM^{d-1}(x-M) +M^d.$$
Note that $f_M$ is by construction convex and $dM^{d-1}$-Lipschitz on $\RR_+$. 
By \cite[Theorem 8.6]{BLM}, we know that $X$ satisfies the convex concentration property as defined in \ref{concconv} with constants $\kappa = 1/2$ and $C=2$. Observe that we can write,
$$ \sum_{i=k+1}^n f_M(\lambda_i) = \tr f_M(X_\sigma) - \tr_{[k]}f_M(X_\sigma).$$
Applying Proposition \ref{conckconv} to the function $x \mapsto f_M(x_\sigma)$ which is convex and $dM^{d-1}$-Lipschitz, we get 
\begin{equation} \label{conctruncER} \PP\Big(\big| \sum_{i=k+1}^{n} f_M(\lambda_i)  - \EE \sum_{i=k+1}^{n} f_M(\lambda_i)  | >\delta n^dp^d \Big) \leq 4\exp\Big( - \frac{n^{2d-1}p^{2d} \delta^2}{8 d^2M^{2(d-1)}}\Big).\end{equation}
Let $\alpha_ n$ to be a sequence going to $+\infty$ such that,
\begin{equation}\label{defalpha} \alpha_n^2 = o\Big( \frac{\sqrt{n} p}{(\log  (1/p))^{\frac{1}{d-1}}}\Big),\end{equation}
which is possible since we assumed $\log(1/p)= o(np^2)$.
We set
 \begin{equation} \label{defM} M = \sqrt{np}\alpha_n.\end{equation} 
 Re-writing \eqref{conctruncER} with this choice of $M$, we have
\begin{equation} \label{alpha} \PP\Big( \big|\sum_{i=k+1}^{n} f_M(\lambda_i)-\EE \sum_{i=k+1}^{n} f_M(\lambda_i) \big| >\delta n^dp^d \Big) \leq 4\exp\Big( - \frac{ n^{d}p^{d+1}\delta^2}{ 8 d^2\alpha_n^{2(d-1)} }\Big).\end{equation}
 But,
\begin{Lem}\label{expectrun} Assume $\log n = o(np)$. There exists a constant $C_0>0$ such that for any $M\geq C_0 \sqrt{np}$, and $k\geq 1$.
$$
\big| \EE \sum_{i=k+1}^{n} ( \lambda_i^d-f_M(\lambda_i))\big| =o(n^dp^d).
$$
\end{Lem}
\begin{proof}
We have,
$$ \big |  \sum_{i=k+1}^n \lambda_i^d- \sum_{i=k+1}^{n} f_M(\lambda_i)\big | \leq \sum_{i=k+1}^n \Car_{\lambda_i\geq M} \lambda_i^d.$$
Using the fact that $\lambda_i\leq n$ since the entries of $X$ are bounded by $1$, we deduce that,
\begin{equation} \label{truncex} \big |  \EE \sum_{i=k+1}^{n} (  \lambda_i^d-f_M(\lambda_i))\big |\leq n^{d+1} \PP\big( \max(- \lambda_n(X), \lambda_2(X)) \geq M\big).\end{equation}
Let $u\in\RR^n$ be the vector $(1,1,\ldots,1)$, and and let $\hat X = X - p u u^*$. By Weyl's inequalities (see \cite[Theorem III.2.1]{Bhatia}), we have 
$$ \max(- \lambda_n(X), \lambda_2(X)) \leq || \hat X||.$$
Since $\log n =o(np)$, we know from \cite[Example 4.10]{LHY}, that $\EE || \hat X|| =O(\sqrt{np})$. As noted before, $\hat X$ has the convex concentration property with constants $\kappa =1/2$ and $C=2$. Using the fact that the spectral radius of a Hermitian matrix is a convex and $1$-Lipschitz function with respect to the Hilbert-Schmidt norm, we deduce that there exist $c_0, \alpha>0$ such that for any $c\geq c_0 \sqrt{np}$,
\begin{equation} \label{tailsp}\PP( || \hat X || \geq c) \leq 2e^{-\alpha c^2}.\end{equation}
As we assumed $\log n =o(np)$, this bound is exponentially small, which, in view of \eqref{truncex} ends proof of the lemma.
\end{proof}
By Lemma \ref{expectrun}, we deduce that for $n$ large enough, we have
 $$ |\EE \sum_{i=k+1}^n\big( \lambda_i^d -  f_M(\lambda_i)\big)| \leq \delta n^dp^d.$$
Therefore, by \eqref{alpha}, we have for $n$ large enough,
$$  \PP\Big( \big|\sum_{i=k+1}^{n} f_M(\lambda_i)-\EE \sum_{i=k+1}^{n}  \lambda_i^d \big| >2\delta n^dp^d \Big) \leq 4\exp\Big( - \frac{\delta^2 n^{d}p^{d+1}}{ 8 d^2\alpha_n^{2(d-1)} }\Big).$$
Since $d\geq 3$, we have $\frac{d-2}{d-1} \leq \frac{1}{2}$. Thus, from the choice of $\alpha_n$ made in \eqref{defalpha},
$$ \alpha_n^2 = o\Big( \frac{n^{\frac{d-2}{d-1}} p}{(\log  (1/p))^{\frac{1}{d-1}}}\Big),$$ so that
\begin{equation} \label{expoequitrunER}\lim_{n\to +\infty} \frac{1}{v_n} \log \PP\Big( |\sum_{i=k+1}^{n} f_M(\lambda_i)- \EE \sum_{i=k+1}^n \lambda_i^d| >2\delta n^dp^d \Big)  = -\infty.\end{equation}
But, 
\begin{equation} \label{f} \PP \Big(  \sum_{i=k+1}^n \big(\lambda_i^d - f_M(\lambda_i)\big)>\delta p^d n^d \Big)\leq \PP\Big( \sum_{i=k+1}^n \lambda_i^d \Car_{\lambda_i\geq  M} >\delta n^dp^d\Big).\end{equation}
Let $R>0$, and denote by $\mathcal{N}[R,+\infty)$ the number of eigenvalues of $X_\sigma$ in $[R,+\infty)$. By Weyl's inequalities (see \cite[Theorem III.2.1]{Bhatia}), we have for any $i\geq 2$,
\begin{equation} \label{Weyl} \lambda_i(\hat X) \leq \lambda_i(X) \leq \lambda_{i-1}(\hat X),\end{equation}
where $\hat X = X -p u u^*$, with $u$ being the all-ones vector.
Therefore,
$$ \PP\big( \mathcal{N}[R,+\infty) \geq k \big) \leq \PP\big( |\{ i : (\lambda_i(\hat X))_\sigma \geq R \}| \geq k-1 \big).$$
As $\EE || \hat X|| =O( \sqrt{np})$ again by \cite[Example 4.10]{LHY}, we deduce by Proposition \ref{probatrou} that if $R$ is such that $\sqrt{np} = o(R)$, then for any $k\geq 2$, 
\begin{equation} \label{probtrouER} \PP( \mathcal{N}[R,+\infty) \geq k)\leq  2 \exp \Big( - \frac{R^2k}{32}\Big).\end{equation}
Now, take $R =  \sqrt{v_n/\beta_n}$, with $\beta_n =o( k)$ and $\beta_n = o(np \log(1/p))$.
Our choice of $R$ satisfies $\sqrt{np} = o(R)$, and we have
$$ \lim_{n\to +\infty} \frac{1}{v_n} \log \PP( \mathcal{N}[R/2,+\infty) \geq k)= - \infty.$$
Since we assumed that $\log^4(1/p) = o(k)$, we can actually set $\beta_n$ to satisfy the conditions,
\begin{equation} \label{defbetan} \log^4(1/p) = o(\beta_n), \ \beta_n =o(k) \text{ and } \beta_n = o(np \log(1/p)),\end{equation}
using the fact that $np$ grows polynomially fast to $+\infty$.
Thus, in view of \eqref{expoequitrunER} and \eqref{f}, it is sufficient to prove that,
\begin{equation} \label{condsuff} \limsup_{n\to +\infty} \frac{1}{v_n} \log \PP\Big( \sum_{i=k+1}^n \lambda_i^d \Car_{M\leq  \lambda_i \leq  R/2} >\delta n^dp^d\Big) = -\infty.\end{equation}
If $R/2< M$, then there is nothing left to prove. Assuming that $R/2 \geq  M$, we are going to use a ``slicing argument'' to fill the gap between these two levels. To this end, define an increasing sequence $M_m$ of levels such that $M_0 = M$, and for any $m\geq 0$,
$$ M_{m+1}^{d}=R^{d-2} M_m^2,$$
that is
$$\log M_m =   \Big( \frac{2}{d}\Big)^m \log M +\Big(1- \Big( \frac{2}{d}\Big)^{m}\Big) \log R.$$
Let $N$ be such that $ (2/d)^N\leq \log 2/\log R$. As $\log R = O(\log n)$, it is possible to find such $N$ with $N =O(\log \log n)$. For such $N$, we get $ M_N \geq R/2$.
Therefore,
$$\PP\Big( \sum_{i=k+1}^n \lambda_i^d \Car_{M\leq \lambda_i \leq R/2} >\delta n^dp^d\Big) \leq \PP\Big( \sum_{i=k+1}^n \lambda_i^d \Car_{M\leq \lambda_i \leq M_N} >\delta n^dp^d\Big).$$
Using a union bound we get,
\begin{align}
\PP\Big( \sum_{i=k+1}^n \lambda_i^d \Car_{M\leq  \lambda_i \leq  M_N} >\delta n^dp^d\Big) & \leq \PP\Big( \sum_{m=0}^{N-1}M_{m+1}^d\mathcal{N}[M_m,+\infty) >\delta n^dp^d\Big)\nonumber \\
& \leq \sum_{m=0}^{N-1} \PP\big( \mathcal{N}[M_m,+\infty)  >\delta n^dp^d/NM_{m+1}^d\big).\label{unionb}
\end{align}
As $M_m \geq M$ and $\sqrt{np} =o(M)$ by definition of $M$ in \eqref{defM}, we deduce from \eqref{probtrouER} that for any $m \geq 0$,
$$  \PP\Big( \mathcal{N}[M_m,+\infty) >\delta n^dp^d/NM_{m+1}^d\Big) \leq 2\exp\Big( -\frac{\delta n^d p^d}{32 N R^{d-2}}\Big).$$
But $R = \sqrt{v_n/\beta_n}$, therefore,
$$  \PP\Big( \mathcal{N}[M_m,+\infty)  >\delta n^dp^d/NM_{m+1}^d\Big) \leq2 \exp (- v_n c_n),$$
with 
$$c_n  = \frac{\delta \beta_n^{\frac{d-2}{2}}}{32N \log^\frac{d}{2} (1/p)}.$$
As $d/(d-2)\leq 3$ since $d\geq 3$, and $N=O(\log \log n)$, we see that with our choice of $\beta_n$ which satisfies $\log^4 (1/p) = o(\beta_n)$, we have in particular
$$ N^{\frac{2}{d-2}} \log ^{\frac{d}{d-2}}(1/p) = o(\beta_n).$$
Thus, $c_n$ goes to $+\infty$.
But, from the union bound \eqref{unionb}, we have
$$\PP\Big( \sum_{i=k+1}^n \lambda_i^d \Car_{M\leq  \lambda_i \leq M_N} >\delta n^dp^d\Big) \leq N \exp( - v_nc_n).$$
Therefore,
$$\lim_{n\to +\infty} \frac{1}{v_n} \log \PP\Big( \sum_{i=k+1}^n \lambda_i^d \Car_{M\leq  \lambda_i\leq  M_N} >\delta n^dp^d\Big)=-\infty,$$
which ends the proof of \eqref{concen}, and thus of the claim \eqref{firststep}.

Now, to end the proof of Lemma \ref{equiER}, we need the following lemma which estimates the expectation of the truncated trace involving the bulk eigenvalues.\renewcommand{\qedsymbol}{}
\end{proof}
\begin{Lem}\label{expect}
Assume $d\geq 4$ and $np^2 \gg 1$, or $d=3$, $\log n =o(np)$ and $k =o((np)^{3/2})$. Then,
$$ \big| \EE \big(  \tr (X^d) - g_k(X)\big) \big| = o(n^dp^d).$$

\end{Lem}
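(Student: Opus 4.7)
The plan is to write $X = \hat X + p u u^*$ with $u = (1,\ldots,1)$, so that $\hat X$ is essentially centered, and to apply Weyl's inequality to this rank-one perturbation to locate the spectrum of $X$: one has $|\lambda_1(X) - np| \leq ||\hat X||$ and $|\lambda_i(X)| \leq ||\hat X||$ for every $i \geq 2$. The tail estimate \eqref{tailsp} established in the proof of Lemma \ref{expectrun} gives $\PP(||\hat X|| > C\sqrt{np}) \leq 2 e^{-\alpha C^2 np}$ for every $C \geq c_0$, whence $\EE ||\hat X||^j = O((np)^{j/2})$ for every fixed $j$. I would fix $C$ large and work on the good event $G = \{||\hat X|| \leq C\sqrt{np}\}$, on which $\lambda_1 \geq np/2 > C\sqrt{np} \geq |\lambda_i|$ for $i \geq 2$ (using $np \to \infty$). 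Since $|\tr X^d - g_k(X)| \leq 2 n^{d+1}$ deterministically and $np \gg \log n$ under either hypothesis, the contribution of $G^c$ to the expectation is $n^{d+1}\PP(G^c) = o(n^d p^d)$.

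For $d \geq 4$, a crude bound already suffices. Since $\lambda_1$ is the top eigenvalue in absolute value and is always kept by $g_k$ (directly when $d$ is even, and through $\tr_{[k]}(X_+^d)$ when $d$ is odd, as $\lambda_1 > 0$ on $G$), whether $d$ is even or odd one has
\[
\bigl|\tr X^d - g_k(X)\bigr| \Car_G \leq \sum_{i \geq 2} |\lambda_i(X)|^d \Car_G \leq n\, ||\hat X||^d\, \Car_G,
\]
so taking expectation gives $|\EE[(\tr X^d - g_k(X))\Car_G]| = O(n^{1+d/2}p^{d/2})$, which is $o(n^d p^d)$ exactly when $n^{d/2 - 1} p^{d/2} \to \infty$. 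For $d = 4$ this is the hypothesis $np^2 \gg 1$; for $d \geq 5$ it follows from the consequence $p \gg n^{-1/2}$ of that hypothesis.

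The case $d = 3$ is the main obstacle, since the crude bound would instead demand $np^3 \to \infty$, which is not implied by $\log n = o(np)$. To handle it, I would exploit the exact cancellation $\EE \tr \hat X^3 = 0$ in the bulk. On a sub-event $G' \subset G$ on which $X$ has at least $k$ positive and at least $k$ negative eigenvalues --- holding with probability $1 - o(1)$ since $k = o(n)$ while both halves of the spectrum of $X$ contain $\Omega(n)$ eigenvalues by a semicircle-type argument for $\hat X/\sqrt{np}$ --- the definition of $g_k$ unfolds to $\tr X^3 - g_k(X) = \sum_{i=k+1}^{n-k}\lambda_i(X)^3$, which I would split as
\[
\tr X^3 - g_k(X) = (\tr X^3 - \lambda_1^3) - \sum_{i \in \{2,\ldots,k\} \cup \{n-k+1,\ldots,n\}} \lambda_i(X)^3.
\]
A direct triangle count yields $\EE\tr X^3 = n(n-1)(n-2)p^3 = n^3 p^3 + O(n^2 p^3)$, while writing $\lambda_1 = np + \xi$ with $|\xi| \leq ||\hat X||$ and expanding $(np + \xi)^3$ gives $|\EE \lambda_1^3 - (np)^3| = O(n^{5/2} p^{5/2})$ via the moment bounds above, so $|\EE[\tr X^3 - \lambda_1^3]| = O(n^{5/2}p^{5/2}) = o(n^3 p^3)$ as $np \to \infty$. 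For the truncation summand, there are at most $2k$ terms each bounded by $||\hat X||^3$, contributing $O(k(np)^{3/2})$ in expectation, which is $o(n^3 p^3)$ precisely under the hypothesis $k = o((np)^{3/2})$. The essence of the difficulty is that the leading order of $\EE \tr X^3$ is $n^3 p^3$ itself and must be cancelled against $\lambda_1^3 \approx (np)^3$ rather than absorbed into the bulk, which is exactly what both the explicit moment computation and the cap on $k$ are designed to accomplish.
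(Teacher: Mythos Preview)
For $d \geq 4$ your argument is essentially the paper's: both bound $|\tr X^d - g_k(X)|$ by $n\,||\hat X||^d$ via Weyl. The paper does this pointwise without splitting into $G$ and $G^c$, but your good-event decomposition is harmless.

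For $d = 3$ you take a genuinely different route. The paper writes $\tr X^3 - g_k(X) = \sum_{i=\ell+1}^{\ell'}\lambda_i(X)^3$ for some $\ell\leq k$, $n-\ell'\leq k$, compares this sum to $\tr\hat X^3$ through the interlacing $\lambda_i(\hat X)\leq\lambda_i(X)\leq\lambda_{i-1}(\hat X)$, and uses $\EE\tr\hat X^3 = O(n^2p^2)$. You instead cancel $\EE\tr X^3 \approx (np)^3$ against $\EE\lambda_1^3 \approx (np)^3$ directly. Both ideas are sound and yield the same $O(k(np)^{3/2})$ remainder that forces the hypothesis $k=o((np)^{3/2})$; the paper's route avoids any appeal to a semicircle-type statement.

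There is, however, a real gap in your handling of $G\setminus G'$. You only assert $\PP(G') = 1 - o(1)$, but on $G\setminus G'$ the pointwise bound available for $|\tr X^3 - g_k(X)|$ (and equally for $|\tr X^3 - \lambda_1^3|$) is $O(n\,||\hat X||^3) = O(n^{5/2}p^{3/2})$, and $n^{5/2}p^{3/2}\cdot o(1)$ is not $o(n^3p^3)$ in the regime $p\sim n^{-1/2}$, since $n^{5/2}p^{3/2}/(n^3p^3) = (np^3)^{-1/2}$ diverges. The same issue prevents you from replacing $\EE[(\tr X^3 - \lambda_1^3)\Car_{G'}]$ by the unconditional $\EE[\tr X^3 - \lambda_1^3]$.

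The fix is simple: drop $G'$ entirely. On $G$ one has $\lambda_1 > 0$, so $\lambda_1^3$ is always the top term of $\tr_{[k]}(X_+^3)$; hence $g_k(X)-\lambda_1^3$ is a signed sum of at most $2k-1$ terms each bounded by $||\hat X||^3$. Thus on all of $G$,
\[
\tr X^3 - g_k(X) = (\tr X^3 - \lambda_1^3) - (g_k(X) - \lambda_1^3),
\]
with the second piece $O(k(np)^{3/2})$ pointwise. Your computations of $\EE\tr X^3$ and $\EE\lambda_1^3$ then close the argument, the $\Car_{G^c}$ corrections being absorbed by the exponential tail exactly as you describe.
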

\begin{proof}We start with the case $d\geq 4$. 
By Weyl's inequalities \eqref{Weyl}, we get
$$ \big|  \tr (X^d) - g_k(X) \big|\leq n || \hat X||^d.$$
But from \eqref{tailsp}, we know that $\EE ||\hat X||^d = O( (np)^{\frac{d}{2}})$, thus
$$ \big|  \tr (X^d) - g_k(X) \big|= O( n (np)^{\frac{d}{2}}).$$
As $d\geq 4$ and $np^2 \to +\infty$, we have that $n^{\frac{d}{2}-1}p^{\frac{d}{2}}$ also goes to $+\infty$, which yields the claim.

Assume now that $d=3$, $\log n =o(np)$ and $k =o((np)^{3/2})$. As $\lambda_1(X)\geq 0$, we have,
$$ \tr (X^3) - \big( \tr_{[k]} (X_+^3) - \tr_{[k]}( X_-^3) \big)= \sum_{i=\ell+1}^{\ell'} \lambda_i(X)^3,$$
for some $\ell,\ell'$ such that $1 \leq \ell\leq k$, and $n-\ell'\leq k$. We have,
 $$\big|  \sum_{i=\ell+1}^{\ell'} \lambda_i(X)^3- \tr (\hat X)^3 \big| \leq\big|\sum_{i=\ell+1}^{\ell'} \lambda_i(X)^3-   \sum_{i=\ell+1}^{\ell'} \lambda_i(\hat X)^3 \big| + 2k ||\hat X||^3.$$
By Weyl's inequalities \eqref{Weyl}, we have for any $i\geq 2$,
$$\lambda_i(\hat X)^3 \leq  \lambda_i(X)^3\leq   \lambda_{i-1}(\hat X)^3.$$
Therefore,
$$0\leq \sum_{i=\ell+1}^{\ell'} \lambda_i(X)^3-   \sum_{i=\ell+1}^{\ell'} \lambda_i(\hat X)^3 \leq 2|| \hat X||^3.$$
Thus,
$$\big|  \EE \sum_{i=\ell+1}^{\ell'} \lambda_i(X)^3\big| \leq  | \EE \tr (\hat X)^3| + 2(k+1)\EE || \hat X||^3.$$
But, as $(\hat X)_{i,i} =-p$ for any $i\in\{1,\ldots,n\}$ and the off-diagonal entries of $\hat X$ are centered, we obtain
$$\EE \tr (\hat X)^3 = -np^3 + O(n^2p^2)=o(n^3p^3).$$
Besides, as $\log n =o(np)$, we know from \eqref{tailsp} that $\EE || \hat X ||^3 = O((np)^{\frac{3}{2}})$. Thus,
$$| \EE  \sum_{i=\ell+1}^{\ell'} \lambda_i(X)^3| = O( k(np)^{\frac{3}{2}}) + o(n^3p^3),$$
which gives the claim.

\end{proof}

\subsection{Proof of Proposition \ref{cycleup}}
From Lemma \ref{equiER}, we see that it suffices to consider the upper tail of the truncated trace,
$$g_k(X) = \begin{cases}
\tr_{[k]} X^d & \text{ if  $d$ is even,}\\
\tr_{[k]} X_+^d - \tr_{[k]} X_-^d & \text{ if  $d$ is odd.}
\end{cases}$$ 
for $k$ such that $ \log^4(1/p) = o(k)$, and with the additional condition that $k=o((np)^{3/2})$ in the case $d=3$. As we assume that $  \log^4(n) = o(np^2)$,
we can find $k$ which satisfies both conditions 
\begin{equation} \label{condk} k\log n =o(np^2 \log(1/p)), \text{ and }\log^4(1/p) = o(k),\end{equation} (the additional condition when $d=3$ being automatically fulfilled). 

With this choice of $k$, we will see that we have reduced the complexity enough to be able to apply Proposition \ref{NL}. Indeed, we will see that we can encode the ``gradient'' of the truncated trace $\tr_{[k]} X^d$ by $O(nk\log n )$ bits. Thus, the choice of $k$ made above will guarantee us that the main error term in Proposition \ref{NL} is negligible with respect to the large deviation speed.

First, we will check that the rate function that Proposition \ref{NL} is giving us, is as good as the one we are claiming, that is:

\begin{Lem}\label{psiphi}
Let $t\geq 1$ and define,
$$\psi_{n}(t) = \inf \big\{ \Lambda_p^*(Y) : g_k( Y) \geq t n^d p^d, Y \in \mathcal{H}_n^0 \big\}.$$
Then,   
$$ \psi_{n}(t) \geq  \phi_n\Big( t-O(k^{1-\frac{d}{2}})\Big),$$
where $\phi_n$ is defined in Proposition \ref{cycleup}.
\end{Lem}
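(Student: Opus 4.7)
My plan is to upgrade any $Y\in\mathcal{H}_n^0$ feasible for the program defining $\psi_n(t)$ into one feasible for $\phi_n(t-\delta_k)$ with $\delta_k=O(k^{1-d/2})$. Specifically, for every $Y$ with $g_k(Y)\geq tn^dp^d$, I will establish $\tr(Y^d)\geq (t-\delta_k)(np)^d$; then $\phi_n(t-\delta_k)\leq \Lambda_p^*(Y)$, and infimizing over admissible $Y$ yields the claim.

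When $d$ is even, $Y^d$ has non-negative spectrum $(|\lambda_i(Y)|^d)_i$, so $\tr(Y^d)\geq \tr_{[k]}(Y^d)=g_k(Y)$ and no correction is needed. When $d$ is odd, I would decompose
\[
\tr(Y^d)-g_k(Y) = \bigl(\tr Y_+^d-\tr_{[k]} Y_+^d\bigr)-\bigl(\tr Y_-^d-\tr_{[k]} Y_-^d\bigr),
\]
both bracketed terms being non-negative; dropping the first yields $\tr(Y^d)\geq g_k(Y)-(\tr Y_-^d-\tr_{[k]} Y_-^d)$. Writing $\lambda_1^*(Y)\geq\lambda_2^*(Y)\geq\ldots$ for the eigenvalues of $Y$ ordered by decreasing modulus, the remaining negative tail is bounded by $\sum_{i>k}\lambda_i^*(Y)^d$; combined with the Markov-type estimate $\lambda_{k+1}^*(Y)^2\leq \tr(Y^2)/(k+1)$, this gives
\[
\sum_{i>k}\lambda_i^*(Y)^d\leq \lambda_{k+1}^*(Y)^{d-2}\tr(Y^2)\leq k^{1-d/2}\,\tr(Y^2)^{d/2}.
\]

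It remains to bound $\tr(Y^2)=2\sum_{i<j}Y_{ij}^2$ in terms of $\Lambda_p^*(Y)$. Using the standard lower bounds $I_p(x)\geq c(x-p)^2/p$ for $x\leq e^2 p$ (from the quadratic behaviour of $I_p$ near $p$) and $I_p(x)\geq \tfrac12 x\log(x/p)\geq x\geq x^2$ for $x\in[e^2p,1]$, summing over $i<j$ yields $\tr(Y^2)=O(n^2p^2+\Lambda_p^*(Y))$. Since planting a clique of size $\lceil t^{1/d}np\rceil$ shows $\phi_n(t)=O(t^{2/d}v_n)$, one may restrict attention to $Y$ with $\Lambda_p^*(Y)=O(v_n)$ (otherwise $\psi_n(t)$ trivially exceeds $\phi_n(t-\delta_k)$), and then $\tr(Y^2)^{d/2}=O((np)^d\log^{d/2}(1/p))$, so $\delta_k=O(k^{1-d/2}\log^{d/2}(1/p))$; under the hypothesis $\log^4(1/p)=o(k)$ this logarithmic factor is absorbed into $O(k^{1-d/2})$. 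The main obstacle is precisely this sharp control of $\tr(Y^2)$ via the case analysis of $I_p$; everything else is a routine rearrangement.
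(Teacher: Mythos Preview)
Your overall strategy is the same as the paper's: reduce to $d$ odd, write $\tr(Y^d)\geq g_k(Y)-\sum_{i>k}\lambda_i(Y_-)^d$, bound the tail by $k^{1-d/2}(\tr Y^2)^{d/2}$, restrict to $\Lambda_p^*(Y)=O(v_n)$ (justified by the clique construction), and control $\tr(Y^2)$ via lower bounds on $I_p$. The place where you fall short of the stated lemma is the last step.

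Your case analysis $I_p(x)\gtrsim (x-p)^2/p$ for $x\le e^2p$ and $I_p(x)\gtrsim x^2$ for $x\ge e^2p$ only yields
\[
\tr(Y^2)=O\bigl(n^2p^2+\Lambda_p^*(Y)\bigr)=O\bigl(n^2p^2\log(1/p)\bigr),
\]
hence $\delta_k=O\bigl(k^{1-d/2}\log^{d/2}(1/p)\bigr)$. Your claim that ``under $\log^4(1/p)=o(k)$ this logarithmic factor is absorbed into $O(k^{1-d/2})$'' is false as written: no hypothesis on $k$ can turn $k^{1-d/2}\log^{d/2}(1/p)$ into $O(k^{1-d/2})$ unless $\log(1/p)=O(1)$. (What \emph{is} true is that your $\delta_k$ is still $o(1)$, since $d/(d-2)\le 3<4$; so your argument suffices for the downstream application, just not for the lemma as stated.)

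The paper obtains the sharp $\tr(Y^2)=O(n^2p^2)$ by using the stronger lower bound $I_p(p+x)\ge x^2(\log(1/p)-o(1))$ for $x\ge 0$ (Lubetzky--Zhao), after reducing WLOG to $Y_{i,j}\ge p$. You can avoid the WLOG and fix your argument directly: keep your bound $(Y_{ij}-p)^2\le (2p/c)\,I_p(Y_{ij})$ for $Y_{ij}\le p$, but on $\{Y_{ij}\ge p\}$ replace your crude $I_p(x)\ge x^2$ by $I_p(x)\ge c(x-p)^2\log(1/p)$. Summing gives $\tr((Y-pU)^2)=O\bigl(p\,\Lambda_p^*(Y)+\Lambda_p^*(Y)/\log(1/p)\bigr)=O(n^2p^2)$, hence $\tr(Y^2)=O(n^2p^2)$ and $\delta_k=O(k^{1-d/2})$ exactly.
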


\begin{proof}Note that when $d$ is even, the claim of the above lemma is trivial, so that we will assume for now on that $d$ is odd. Observe also that $\psi_{n}(t) =O(v_n)$ by taking $C$ to correspond to planting clique of size $r=\lceil (t-1)^{1/d} np \rceil$, that is, 
$$\forall i<j, \ C_{i,j}= \begin{cases}1 & \text{ if } i\wedge j \leq r,\\
p & \text{ if } i\vee j >r.
\end{cases}$$
Let then $Y\in \mathcal{H}_n^0$ be such that $\Lambda^*_p(Y) =O(v_n)$ and $g_k(Y) \geq t n^d p^d$. Without loss of generality we can assume that $Y_{i,j}\geq p$ for any $i<j$. By \cite[Corollary 3.5]{LZ}, we know that for any $x\geq 0$,
$$ \Lambda^*_p(p+x) \geq x^2 (\log (1/p) - o(1)).$$
We deduce, denoting by $U$ the matrix such that $U_{i,j} = 1$ for $i\neq j$ and null diagonal coefficients, that
$$ \tr (Y-pU)^2 =O(n^2p^2).$$
In particular, $\tr Y^2 = O(n^2p^2)$. We obtain for any $i\geq 1$,
$$ \lambda_i(Y_-) =O\Big( \frac{np}{\sqrt{i}}\Big).$$
Therefore,
$$ \sum_{i=k+1}^n \lambda_i(Y_-)^d  = O(n^dp^d k^{1-\frac{d}{2}}).$$
But,
$$ g_k(Y) \leq  \tr (Y^d)  +\sum_{i=k+1}^n \lambda_i(Y_-)^d,$$
which gives the claim.
\end{proof}

We can now proceed with the proof of the upper bound of Proposition \ref{cycleup}. Note that $\Phi$ is lower semi-continuous as one can see from its explicit expression \eqref{defPhi}. Therefore, by Lemma \ref{equiER} it is sufficient to show that for any $t> 1$,
$$ \limsup_{n\to +\infty} \frac{1}{v_n} \log \PP\big ( g_k(X)  \geq t n^d p^d\big) \leq - \Phi(t),$$
with $k$ such that $ \log^4(1/p)= o(k)$ and $ k =o(np^2)$.

 Fix some $t>1$ and $\delta>0$ such that $t-\delta> 1$.
Observe that as $\Lambda_p^*$ is strictly convex, $\psi_{n}$, defined in Lemma \ref{psiphi}, is strictly increasing.  
Therefore, we can apply Corollary \ref{CorNL}. Let $\mathcal{H}_n^0([0,1])$ denote the subset of $\mathcal{H}_n^0$ which consists of matrix with entries in $[0,1]$. Observe that for any $Y\in\mathcal{H}_n^0([0,1])$, 
$$\Lambda^*_p(Y) \leq n^2 \log(1/p),$$
 so that the tightness assumption \eqref{tightness} of Proposition \ref{NL} is fulfilled with $K = \mathcal{H}_n^0([0,1])$ and $\kappa=n^2\log (1/p)$. Arguing as in the proof of \eqref{difftr}, and using the fact that $||X||\leq n$ for any $X\in \mathcal{H}_n^0([0,1])$, we obtain that for any $X,Y \in \mathcal{H}_n^0([0,1])$,
$$g_k(X) - g_k(Y) \leq \sup_{ H \in W}  \tr H(X-Y),$$
where 
$$ W = \big\{ H \in \mathcal{H}_n : \rk(H) \leq 2k, \ || H|| \leq 2dn^{d-1}\big\},$$
using the fact that for any $H \in\mathcal{H}_n([0,1])$, $|| H|| \leq n$.
As the Lipschitz constant of $g_k$ on $\mathcal{H}_n([0,1])$  and the diameter of $\mathcal{H}_n([0,1])$ are both only polynomial in $n$, we get by Corollary \ref{CorNL}, 
$$\PP( g_k(X)\geq tn^dp^d ) \leq - \psi_{n}(t-\delta) + \log N(W, (\delta/2n) B_{2}) + O(\log n),$$
 where $N(W, (\delta/2n) B_{2})$ is the covering number of $W$ by balls of radius $\delta/n$ for the Hilbert-Schmidt norm. By Lemma \ref{complexity}, we have
$$\log  N(W, (\delta/2n) B_{2}) = O(nk \log n).$$
As we chose $k$ such that $k\log n  = o(np^2\log (1/p))$, we get
$$\PP( g_k(X)\geq tn^dp^d ) \leq - \psi_{n}(t-\delta) +o(v_n).$$
Using Lemma \ref{psiphi} and the fact that $\phi_n$ is increasing, we obtain that for $n$ large enough, $\psi_{n}(t-\delta) \geq \phi_n(t-2\delta)$. Therefore,
$$\limsup_{n\to +\infty} \frac{1}{v_n} \log \PP( g_k(X)\geq tn^dp^d ) \leq - \Phi(t-2\delta).$$
Since this is true for any $\delta>0$, and $\Phi$ is lower semi-continuous, we get the claim by taking $\delta \to 0$.

\section*{Acknowledgement} I would like to thank Ofer Zeitouni for his enlightening insight and advice. I am also grateful to Ronen Eldan for some influential discussions and for the remark \ref{remRonen}, as well as to Anirban Basak for many helpful discussions. Finally, I thank the anonymous referee for the numerous comments and suggestions.

%
%
%
%
%

\bibliographystyle{plain}
\bibliography{main}{}

\end{document}